\documentclass{aptpub}

\usepackage[T1]{fontenc}
\usepackage{lmodern}
\usepackage[english]{babel}
\usepackage{amssymb}
\usepackage{amsmath}
\usepackage{theorem}
\usepackage{graphicx}
\usepackage{bbm}
\usepackage{amsmath,amstext}  
\usepackage{amsfonts,amssymb} 
\usepackage[english]{babel}
\usepackage{latexsym,amsfonts,amsmath,amssymb,amsbsy}
\usepackage{graphics, graphicx}
\usepackage[mathscr]{eucal}
\usepackage{epsfig}
\usepackage{epsf}
\usepackage{color}
\usepackage{pifont}

\newcommand{\R}{\mathbb{R}}
\renewcommand{\N}{\mathbb{N}}
\newcommand{\PP}{\mathbf{P}}
\renewcommand{\E}{\mathbf{E}}
\newcommand{\1}{\mathbbm{1}}

\newtheorem{Theorem}{Theorem}[section]
\newtheorem{Proposition}[Theorem]{Proposition}
\newtheorem{Lemma}[Theorem]{Lemma}

\newtheorem{Definition}[Theorem]{Definition}
\newtheorem{Remark}[Theorem]{Remark}

\newtheorem{Assumption}[Theorem]{Assumption}

\authornames{A. Brandejsky, B. de~Saporta, F. Dufour} 
\shorttitle{Exit time for PDMP} 

\begin{document}

\title{Numerical methods for the exit time of a piecewise-deterministic Markov process \footnote{This work was supported by ARPEGE program of the French National Agency of Research (ANR), project ''FAUTOCOES'', number ANR-09-SEGI-004.}}


\authorone[INRIA Bordeaux Sud-ouest, team CQFD, France.]{Adrien Brandejsky\footnote{The authors gratefully acknowledge EADS Astrium for its financial support.}}

\authorone[Universit\'e de Bordeaux, GREThA, CNRS, UMR 5113,\\ IMB, CNRS, UMR 5251, and INRIA Bordeaux Sud-ouest, team CQFD, France.]{Beno\^\i te de~Saporta}

\authorone[Universit\'e de Bordeaux, IMB, CNRS, UMR 5251,\\ and INRIA Bordeaux Sud-ouest, team CQFD, France.]{Fran\c cois Dufour}
\addressone{INRIA Bordeaux Sud-ouest, 351 cours de la Lib\'eration, F33405 Talence, France}

\begin{abstract}\quad
We present a numerical method to compute the survival function and the moments of the exit time for a piecewise-deterministic Markov process (PDMP). Our approach is based on the quantization of an underlying discrete-time Markov chain related to the PDMP. The approximation we propose is easily computable and is even flexible with respect to the exit time we consider. We prove the convergence of the algorithm and obtain bounds for the rate of convergence in the case of the moments. An academic example and a model from the reliability field illustrate the paper.
\end{abstract}

\keywords{exit time, piecewise deterministic Markov processes, quantization, numerical method}

\ams{60J25;65C20}{60K10}

\newpage
\tableofcontents

\newpage

\section*{Introduction}

The aim of this paper is to propose a practical numerical method to approximate the survival function and the moments of the exit time for a piecewise-deterministic Markov process thanks to the quantization of a discrete-time Markov chain naturally embedded within the continuous-time process.\\

Piecewise-deterministic Markov processes (PDMP's) have been introduced by M.H.A. Davis in \cite{davis} as a general class of stochastic models. PDMP's are a family of Markov processes involving deterministic motion punctuated by random jumps. The motion depends on three local characteristics namely the flow $\Phi$,  the jump rate $\lambda$ and the transition measure $Q$,which specifies the post-jump location. Starting from the point $x$, the motion of the process follows the flow $\Phi(x,t)$ until the first jump time $T_{1}$, which occurs either spontaneously in a Poisson-like fashion with rate $\lambda(\Phi(x,t))$ or when the flow $\Phi(x,t)$ hits the boundary of the state space. In either case, the location of the process at the jump time $T_{1}$, is selected by the transition measure $Q(\cdot,\Phi(x,T_{1}))$ and the motion restarts from this new point $X(T_{1})$ denoted by $Z_{1}$. We define similarly the time $S_{2}$ until the next jump, the next jump time is $T_{2}=T_{1}+S_{2}$, the next post-jump location $Z_{2}=X(T_{2})$ and so on. Thus, associated to the PDMP we have discrete-time Markov chains $(Z_{n},T_{n})_{n\in \N}$, given by the post-jump locations and the jump times, and $(Z_{n},S_{n})_{n\in \N}$, given by the post-jump locations and the inter-jump times. A suitable choice of the state space and the local characteristics $\Phi$, $\lambda$ and $Q$ provides stochastic models covering a great number of problems of operations research for example see \cite{chiquet}, \cite{davis} and the corrosion model presented in this paper.\\

Numerical computation of the moments of the exit time for a Markov process has been studied by K. Helmes, S. R\"ohl and R.H. Stockbridge in \cite{LP}. Starting from an assumption related to the generator of the process, they derive a system of linear equations satisfied by the moments. In addition to these equations, they include finitely many Hausdorff moment conditions that are also linear constraints. This optimization problem is a standard linear programming problem for which many efficient softwares are available. J.-B. Lasserre and T. Prieto-Rumeau introduced in \cite{SDP} a similar method but they improved the efficiency of the algorithm by replacing the Hausdorff moment conditions with semidefinite positivity constraints of some moment matrices. Nevertheless, their approach cannot be applied to PDMP's because the assumption related to the generator of the process is generally not satisfied. In \cite{davis} section 33, M.H.A. Davis gives an iterative method to compute the mean exit time for a PDMP but his approach involves solving a large set of ODEs whose forms are very problem specific, depending on the behaviour of the process at the boundary of the state space. Besides, and in the context of applications to reliability, it seems important to study also the distribution of the exit time.\\

There exists an extensive literature on quantization methods for random variables and processes. The interested reader may for instance consult \cite{gray}, \cite{quantif} and the references within. Quantization methods have been developed recently in numerical probability or optimal stochastic control with applications in finance (see e.g. \cite{quantif1}, \cite{quantif2}, \cite{BoutonPags} and \cite{quantif}). The quantization of a Markov chain $(\Theta_n)_{n\in \N}$ consists in finding, for each $n$, an optimally designed discretization of the state space of $\Theta_n$ providing the best possible $L^{p}$ approximation by a random variable $\widehat{\Theta}_n$ taking its values in a grid $\Gamma_{n}$ of finite and fixed size as well as transition measure of the quantized chain $(\widehat{\Theta}_n)_{n\in\N}$. As explained for instance in \cite{quantif}, section 3, provided that the Markov kernel is Lipschitz, bounds for the rate of $L^{p}$ convergence of the quantized process towards the original process are obtained.\\

In the present work, we consider a PDMP $\left(X_{t}\right)_{t\geq 0}$ with state space $E$ and we present approximation methods to compute the moments and the survival function of the exit time from a set denoted $U\subset E$ given the fact that it happens before the~$N^{\text{th}}$ jump of the PDMP denoted by $T_{N}$.
Roughly speaking, we estimate the moments and the survival function for $\tau\wedge T_{N}$.
In our approach, the first step consists in expressing the $j$-th moment (respectively the survival function) as the last term of some sequence $(p_{k,j})_{k\leq N}$ (respectively $(p_{k})_{k\leq N}$) satisfying a recursion $p_{k+1,j}=\psi(p_{k,j})$ (respectively $p_{k+1}=\psi(p_{k})$) specifically built within our paper. \\

In this context, a natural way to deal with these problems is to follow the idea developed in \cite{arret optimal francois benoite} namely to write the recursions in terms of an underlying discrete-time Markov chain and to replace it by its quantized approximation. The definitions of $(p_{k,j})_{k}$ and $(p_{k})_{k}$ involve some discontinuities related to indicator functions but as in \cite{arret optimal francois benoite}, we show that they happen with small enough probability. However, an important feature that distinguishes the present work from \cite{arret optimal francois benoite} and which prevents a straightforward application of the ideas developed within, is that an additional important difficulty appears in the definition of the sequences $(p_{k,j})_{k}$ and $(p_{k})_{k}$. Indeed, the mapping $\psi$ such that $p_{k+1,j}=\psi(p_{k,j})$ and $p_{k+1}=\psi(p_{k})$ is not Lipschitz continuous. One of the main results of this paper is to overcome this difficulty by deriving new and important properties of the Markov chain $(Z_{n},T_{n})_{n\in \N}$, combined to a sharp feature of the quantization algorithm. We are able to prove the convergence of the approximation scheme. Moreover, in the case of the moments, we even obtain bounds for the rate of convergence. It is important to stress that these assumptions are quite reasonable with regards to the applications.\\

An important advantage of our method is that it is flexible. Indeed, as pointed out in \cite{quantif1}, a quantization based method is ``obstacle free'' which means, in our case, that it produces, once and for all, a discretization of the process independently of the set $U$. Consequently, the approximation schemes for both the moments and the distribution of the exit time are flexible w.r.t. to $U$. Indeed, if we are interested in the exit time from a new set $U'$, it will be possible, provided $U'$ satisfies the same assumptions as $U$, to obtain in a very simple way the moments and the distribution of this new exit time. Indeed, the quantization grids are only computed once, stored off-line and may therefore serve many purposes. \\

The paper is organized as follows. We first recall the definition of a PDMP and state our assumptions. In Section \ref{section-exit-time}, we introduce the moments and the distribution problems and present recursive methods to solve them. Section \ref{section-approximation} contains the main contribution of this paper namely the approximation schemes, the proofs of convergence and bounds for the rates of convergence. Eventually, two numerical examples are developed in Section \ref{section-examples} and the advantages of our approach are discussed in Section \ref{conclusion}.

\section{Definitions and assumptions}

For any metric space $X$, we denote $\mathcal{B}(X)$ its Borel $\sigma$-field and $B(X)$ the set of real-valued, bounded and measurable functions defined on~$X$. For $a,b \in \R$, denote $a\wedge b=\min(a,b)$ and $a\vee b=\max(a,b)$.

\subsection*{Definition of a PDMP}
In this first section, let us define a piecewise-deterministic Markov process and introduce some general assumptions. Let $M$ be a finite set called the set of the modes that will represent the different regimes of evolution of the PDMP ($M$ is supposed to be a finite space although it could be countable), for each $m\in M$, the process evolves in $E_m$, an open subset of $\R^{d(m)}$ (where $d:M\rightarrow \mathbb{N}^*$). Let $$E=\left\{(m,\xi),m\in M,\xi\in E_m \right\}.$$
This is the state space of the process $(X_t)_{t\in \mathbb{R}^+}=(m_t,\xi_t)_{t\in \mathbb{R}^+}$. Let $\partial E$ be its boundary and $\overline{E}$ its closure and for any subset $Y$ of $E$, $Y^{c}$ denotes its complement.\\

Define on $E$ the following distance, for $x=(m,\xi)$ and $x'=(m',\xi') \in E$,
\begin{equation}\label{def-dist}
|x-x'|=\left\{
\begin{array}{ll}
+\infty & \text{ if } m\neq m',\\
|\xi-\xi'|& \text{ otherwise. }
\end{array}
\right.
\end{equation}
Moreover, for any $x\in E$ and $Y\subset E$, denote $d(x,Y)$ the distance between the point $x$ and the set $Y$ i.e. $d(x,Y)=\inf_{y\in Y}|x-y|$. \\

A PDMP is defined by its local characteristics $(\Phi_{m},\lambda_{m},Q_{m})_{m\in M}$ :
\begin{itemize}
\item{For each $m\in M$, $\Phi_m : \mathbb{R}^{d(m)}\times \mathbb{R}\rightarrow \mathbb{R}^{d(m)}$ is a continuous function called the flow in mode $m$. For all $t\in \mathbb{R}$, $\Phi_m(\cdot,t)$ is an homeomorphism and $t\rightarrow \Phi_m(\cdot,t)$ is a group i.e. for all $\xi\in \mathbb{R}^{d(m)}$, $\Phi_m(\xi,t+s)=\Phi_m(\Phi_m(\xi,s),t)$.
    For all $x=(m,\xi)\in E$, define now the deterministic exit time from $E$ :
    $$t^*(x)=
    \inf\{t>0 \text{ such that } \Phi_m(\xi,t)\in\partial E_m\}$$
    We use here and throughout the whole paper the convention $\inf \emptyset = + \infty$.
    }
\item{For all $m\in M$, the jump rate $\lambda_{m} : \overline{E}_{m}\rightarrow \mathbb{R}^+$ is measurable and satisfies :
$$\forall (m,\xi)\in E\text{,  }\exists \epsilon >0\text{ such that }\int_0^\epsilon \lambda_{m}(\Phi_m(\xi,t))dt< +\infty.$$
}
\item{For all $m\in M$, $Q_{m}$ is a Markov kernel on $(\mathcal{B}(\overline{E}),\overline{E}_{m})$ which satisfies :
$$\forall \xi \in \overline{E}_{m}\text{,  }Q_{m}(E\backslash\{(m,\xi)\},\xi)=1.$$
}
\end{itemize}
From these characteristics, it can be shown (see \cite{davis}) that there exists a filtered probability space $(\Omega,\mathcal{F},{\mathcal{F}_t},(\mathbf{P}_x)_{x\in E})$ on which a process $(X_t)_{t\in \mathbb{R}^+}$ is defined. Its motion, starting from a point $x\in E$, may be constructed as follows.
Let $T_1$ be a nonnegative random variable with survival function :
$$\PP_x(T_1>t)=
\left\{\begin{array}{ll}
e^{-\Lambda(x,t)} & \text{if } 0\leq t<t^*(x), \\
0 & \text{if }t\geq t^*(x),
\end{array}\right.$$
where for $x=(m,\xi)\in E$ and $t\in [0,t^*(x)]$,
$$\Lambda(x,t)=\int_0^t\lambda_{m}(\Phi_m(\xi,s))ds.$$

One then chooses an $E$-valued random variable $Z_1$ with distribution $Q_{m}(\cdot,\Phi_m(\xi,T_1))$. The trajectory of $X_t$ for $t\leq T_1$ is :
$$X_t=\left\{
\begin{array}{ll}
(m,\Phi_m(\xi,t))&\text{ if }t<T_1,\\
Z_1&\text{ if }t=T_1.
\end{array}\right.
$$
Starting from the point $X_{T_1}=Z_1$, one then selects in a similar way $S_{2}=T_2-T_1$ the time between $T_{1}$ and the next jump, $Z_2$ the next post-jump location and so on.\\
M.H.A. Davis shows (see \cite{davis}) that the process so defined is a strong Markov process $(X_t)_{t\geq 0}$ with jump times $(T_n)_{n\in {\mathbb{N}}}$ (with $T_0=0$). The process $(\Theta_n)_{n\in\N}=(Z_n,T_n)_{n\in\N}$ where $Z_n=X_{T_n}$ is the post-jump location and $T_{n}$ is the $n$-th jump time is clearly a discrete-time Markov chain. Besides, we denote  $S_n=T_n-T_{n-1}$ and $S_0=0$ the inter-jump times.\\
The following assumption about the jump-times is standard (see for example \cite{davis}, section 24) :
\begin{Assumption}\label{hyp-Tk_goes_to_infty}
For all $(x,t)\in E\times \mathbb{R}^+$, $\E_x\left[\sum_k \mathbbm{1}_{\{T_k<t\}}\right]<+\infty$.
\end{Assumption}
It implies that $T_k\rightarrow +\infty$ a.s. when $k\rightarrow +\infty$.\\

\paragraph{}
For notational convenience, any function $h$ defined on $E$ will be identified with its component functions $h_m$ defined on $E_m$. Thus, one may write $$h(x)=h_m(\xi) \text{ when } x=(m,\xi)\in E.$$
We also define a generalized flow $\Phi : E\times\mathbb{R}^+\rightarrow E$ such that $$\Phi(x,t)=(m,\Phi_m(\xi,t)) \text{ when } x=(m,\xi)\in E.$$

\subsection*{Notation}
\paragraph{}For any function $w$ in $B(\overline{E})$, introduce the following notation 
\begin{equation*}
Qw(x)=\int_E w(y)Q(dy,x) \text{, } C_w=\sup_{x\in \overline{E}}|w(x)|
\end{equation*}
and for any Lipschitz continuous function $w$ in $B(\overline{E})$, denote $[w]$ its Lipschitz constant:
$$[w]=\sup_{x\neq y \in \overline{E}}\frac{|w(x)-w(y)|}{|x-y|}$$
with the convention $\frac{1}{\infty}=0$.

\begin{Remark}\label{RqLip_f_fm}
For $w\in B(\overline{E})$ and from the definition of the distance on $E$, one has $[w]=\text{sup}_{m\in M}[w_m]$.
\end{Remark}

\section{Exit time}\label{section-exit-time}

For all $m\in M$, let $U_m$ be a Borel subset of $E_m$, let $U=\left\{(m,\xi),m\in M,\xi\in U_m \right\}$. We are interested in the exit time from $U$ denoted by~$\tau$ :
\begin{equation}\label{DefTau}
\tau=\text{inf} \left\{ s \geq 0 \text{ such that } X_s \not\in U\right\}.
\end{equation}
Denote $\mu$ the distribution of the initial state of the process $Z_{0}$. Since the present paper concerns numerical computations, the following assumption appears natural.
\begin{Assumption}\label{hyp-tau-fini}
The process starts in $U$ and eventually leaves it almost surely i.e. the support of $\mu$ is included in $U$ and $\PP_{\mu}\left(\tau<+\infty\right)=1$.
\end{Assumption}
The aim of this paper is to provide approximation schemes for its survival function and its moments. Our method has a high practical interest because it will provide numerical approximations as soon as the process can be simulated. Our approach is based on a recursive computation using the underlying discrete-time Markov chain $(Z_n,T_n)_{n\in\N}$. Therefore, we will study $\tau\wedge T_N$ rather than $\tau$ for some $N\in \N$ called the computation horizon. Indeed, thanks to Assumption \ref{hyp-Tk_goes_to_infty}, when $N$ goes to infinity, one has
$$\tau\wedge T_N\rightarrow \tau \qquad \text{ $\PP_\mu$ a.s.}$$
One may approximate $\tau$ by $\tau\wedge T_{N}$ if $N$ is chosen such that $\PP_{\mu}(\tau > T_N)$ be small enough (the choice of $N$ will be discussed in section \ref{ChoixHorizonCalcul}) because the evolution of the process beyond $T_{N}$ will have little impact on the law or the moments of the exit time.
The rest of this section presents the two problems we are interested in and describes recursive methods to solve them.\\

\begin{Definition}\label{Defu*}
Let us introduce $u^*(x)$ for all $x\in U$ as the time for the flow starting from the point $x$ to exit from $U$ :
\begin{equation*}
u^*(x)=\inf\left\{s\geq 0 \text{ such that } \Phi(x,s)\not\in U\right\}
\end{equation*}
\end{Definition}
We now introduce some technical assumptions that will be in force throughout the whole paper. The first three ones will be crucial while the two last ones can be made without loss of generality.

\begin{Assumption}\label{hyp_u*_lip}The function $u^{*}$ is
\begin{description}
\item[a.]{Lipschitz continuous,}
\item[b.]{bounded by $C_{u^{*}}$.}
\end{description}
\end{Assumption}

\begin{Assumption}\label{hyp-U-convex} For all $m\in M$, the set $U_{m}$ is convex.
\end{Assumption}

\begin{Assumption}\label{hyp-proba-U-apha}
For $\alpha>0$, let $U^{\alpha}=\left\{x\in E \text{ such that } d(x,\partial U)\leq \alpha\right\}$. There exists $C>0$ and $\beta>0$ such that for all $k\in \{0,...,N\}$, $\PP_{\mu}(Z_{k}\in U^{\alpha})\leq C \alpha^{\beta}$.
\end{Assumption}
\begin{Remark} This technical condition can be checked in most of the applications. We will see, in the examples developed in Section \ref{section-examples}, how it can be derived quite generally when $Z_{k}$ has a bounded density. Moreover, it could be replaced by the following one, similar to an hypothesis introduced by M.H.A. Davis in \cite{davis} (Section 24) and presented as quite general in applications : there exists $\epsilon>0$ such that for all $x\in U$, $Q(U^\epsilon,x)=0$ where $U^\epsilon=\left\{x\in E \text{ such that } d(x,\partial U)\leq\epsilon\right\}$ i.e. for all $k\in \{0,...,N\}$, $\PP_{\mu}(Z_{k}\in U^{\epsilon})=0$.
\end{Remark}

\begin{Assumption}\label{hypNonRetourU}
The process cannot go back to $U$ once it has left it i.e. $\forall z \in U^{c}, \PP_z(\forall t \geq 0, X_t\in U)=0.$
\end{Assumption}
\begin{Assumption}\label{hyp_t*_bnd}
The function $t^{*}$ is bounded by $C_{t^{*}}$.
\end{Assumption}

In our discussion, Assumption \ref{hypNonRetourU} does not imply any loss of generality and Assumption \ref{hyp_t*_bnd} stems from Assumption \ref{hyp_u*_lip}.b. Indeed, if any of the two previous assumptions is not satisfied by the process $(X_{t})_{t\in\R^{+}}$, we introduce the process \textit{killed at time $\tau$} denoted $(\widetilde{X}_{t})_{t\in\R^{+}}$ and defined by:
\begin{equation*}
\widetilde{X}_{t=}\left\{\begin{array}{ll}
X_{t}& \text{ for $t<\tau$,}\\
\Delta &\text{ for $t\geq\tau$.}
\end{array}\right.\end{equation*}
where $\Delta$ denotes a cemetery state. The state space of the killed process is $\widetilde{E}=U\cup \{\Delta\}$ and Assumption \ref{hypNonRetourU} is fulfilled since the killed process remains in $\Delta$ after leaving $U$. In addition, $\tilde{t}^{*}$, the deterministic exit time from $\widetilde{E}$ for the killed process equals $u^{*}$ that is bounded and Lipschitz continuous according to Assumption \ref{hyp_u*_lip}.

\subsection{Distribution}

The first goal of this paper is to compute an approximation of the law of the exit time $\tau$. More precisely, we intend to approximate $\mathbf{P}_{\mu}(\tau>s\big|\tau\leq T_N)$ for $s>0$.

\bigskip

Our approach has a huge practical interest because we will see that, after some initial computations, any value of the survival function of $\tau$ may be quickly obtained. More importantly, our approach is even flexible with respect to $U$ in the sense that the survival function of the exit time $\tau'$ from a new set $U'\subset U$ will also be directly available (provided that Assumptions \ref{hyp_u*_lip} to \ref{hypNonRetourU} are still fulfilled by $U'$).\\

\begin{Definition}For all $s>0$, define as follows the sequences $(p_k(s))_{k\geq 0}$, $(q_k)_{k\geq 0}$ and $(r_k(s))_{k\geq 0}$ 
$$\left\{\begin{array}{ll}
p_k(s)&=\mathbf{P}_{\mu}(\tau>s\big|\tau\leq T_k),\\
q_k&=\mathbf{P}_{\mu}(\tau\leq T_k),\\
r_k(s)&=\mathbf{P}_{\mu}(\{\tau>s\}\cap\{T_{k}<\tau\leq T_{k+1}\}).
\end{array}\right.$$
\end{Definition}
\begin{Remark}The conditional probability $p_k(s)$ does not exist when $q_k=0$. We then choose to extend the sequence by setting $p_k(s)=0$.
\end{Remark}
\noindent Our objective is to approximate $p_N(s)$ where $N$ represents the computation horizon. The following proposition provides a recursion for the sequence $(p_k)_{k\leq N}$, pointing out that $p_N$ may be computed as soon as the sequences $(q_k)_{k\leq N}$ and $(r_k)_{k\leq N-1}$ are known.

\begin{Proposition}\label{rec_pk}Under Assumption \ref{hyp-tau-fini}, for all $k\in\N$, $s>0$, $p_0(s) =0$ and
\begin{equation*}
p_{k+1} (s)=
\left\{\begin{array}{ll}
\frac{p_k(s)q_k+r_k(s)}{q_{k+1}}, &\qquad\text{if $q_{k+1}\neq 0$}\\
0 & \qquad \text{otherwise.}
\end{array}\right.
\end{equation*}
\end{Proposition}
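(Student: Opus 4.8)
The plan is to decompose the event $\{\tau > s\}$ according to which inter-jump interval contains the exit time, and to exploit the nesting $\{\tau \le T_k\} \subset \{\tau \le T_{k+1}\}$. First I would observe that $p_0(s) = \mathbf{P}_\mu(\tau > s \mid \tau \le T_0) = 0$, since $T_0 = 0$ and $\tau > 0$ almost surely (the process starts in the open-ish set $U$ and Assumption \ref{hyp-tau-fini} places the support of $\mu$ in $U$), so the event $\{\tau \le T_0\}$ has probability zero and the convention applies; alternatively $\{\tau > s\} \cap \{\tau \le T_0\} = \emptyset$. This handles the base case.

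For the recursion, fix $k \in \N$ and $s > 0$ and suppose $q_{k+1} \neq 0$. I would start from the definition
\begin{equation*}
p_{k+1}(s)\, q_{k+1} = \mathbf{P}_\mu\bigl(\{\tau > s\} \cap \{\tau \le T_{k+1}\}\bigr).
\end{equation*}
Since $T_k \le T_{k+1}$ pointwise, the event $\{\tau \le T_{k+1}\}$ splits as the disjoint union of $\{\tau \le T_k\}$ and $\{T_k < \tau \le T_{k+1}\}$. Intersecting with $\{\tau > s\}$ and using additivity of $\mathbf{P}_\mu$ gives
\begin{equation*}
\mathbf{P}_\mu\bigl(\{\tau > s\} \cap \{\tau \le T_{k+1}\}\bigr)
= \mathbf{P}_\mu\bigl(\{\tau > s\} \cap \{\tau \le T_k\}\bigr)
+ \mathbf{P}_\mu\bigl(\{\tau > s\} \cap \{T_k < \tau \le T_{k+1}\}\bigr).
\end{equation*}
The first term on the right is $p_k(s)\, q_k$ by definition of $p_k$ and $q_k$ (valid whether or not $q_k = 0$, thanks to the extension convention $p_k(s) = 0$ when $q_k = 0$, since then the term is $0$ on both sides), and the second term is exactly $r_k(s)$. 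Dividing by $q_{k+1} \neq 0$ yields the stated formula. When $q_{k+1} = 0$, the extension convention sets $p_{k+1}(s) = 0$ by definition, so there is nothing to prove.

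I do not expect any serious obstacle here: the argument is purely a set-theoretic decomposition together with bookkeeping of the degenerate cases where conditional probabilities are undefined. The only point requiring a little care is checking that the convention $p_k(s) = 0$ on $\{q_k = 0\}$ is consistent with the identity $\mathbf{P}_\mu(\{\tau > s\} \cap \{\tau \le T_k\}) = p_k(s) q_k$, i.e. that $q_k = 0$ forces $\mathbf{P}_\mu(\{\tau > s\} \cap \{\tau \le T_k\}) = 0$ — which is immediate since that event is contained in $\{\tau \le T_k\}$. Assumption \ref{hyp-tau-fini} is used only to guarantee that the sequences are well defined and that $q_k \to 1$, but the recursion itself holds for every $k$ regardless.
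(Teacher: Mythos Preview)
Your proof is correct and follows essentially the same approach as the paper: both use the disjoint decomposition $\{\tau\leq T_{k+1}\}=\{\tau\leq T_k\}\cup\{T_k<\tau\leq T_{k+1}\}$, identify the resulting probabilities as $p_k(s)q_k$ and $r_k(s)$, and divide by $q_{k+1}$. Your treatment is in fact slightly more careful about the degenerate cases $q_k=0$ and $q_{k+1}=0$, which the paper leaves implicit.
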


\begin{proof} First, recall that $T_{0}=0$ so that one has $p_0=0$ since the process starts in $U$ according to Assumption \ref{hyp-tau-fini}. Then, let $k\in\N$ such that $q_{k+1}\neq 0$ and notice that $\{\tau\leq T_{k+1}\}=\{\tau\leq T_k\}\cup\{T_k<\tau\leq T_{k+1}\}$, one has
\begin{align*}
p_{k+1}(s)&=\frac{\mathbf{P}_{\mu}(\{\tau>s\}\cap\{\tau\leq T_{k+1}\})}{\mathbf{P}_\mu(\tau\leq T_{k+1})}\\
&=\frac{\mathbf{P}_{\mu}(\{\tau>s\}\cap\{\tau\leq T_{k}\})+\mathbf{P}_{\mu}(\{\tau>s\}\cap\{T_k<\tau\leq T_{k+1}\})}{q_{k+1}}\\
&=\frac{p_k(s)q_{k}+r_k(s)}{q_{k+1}}
\end{align*}
showing the results.\hfill$\Box$
\end{proof}

Now, before turning to computations, let us present the second problem we are interested in.

\subsection{Moments}\label{DefRec}

Our second goal is to approximate the moments of the exit time from $U$ i.e. for all $j\in\N$, we are interested in $\E_{\mu}[\tau^{j}\big|\tau\leq T_{N}]$. This is a very classical problem and some results are already available. First, it is possible to use a Monte Carlo method and we will point out why the method we propose is more efficient and flexible. Furthermore, K. Helmes, S. R\"ohl and R.H. Stockbridge introduced in \cite{LP} a numerical method for computing the moments of the exit time based on linear programming. J.-B. Lasserre and T. Prieto-Rumeau improved this method in \cite{SDP} by using semidefinite positivity moment conditions. These methods are quite efficient but they require an assumption related to the generator of the process which is generally not fulfilled by the PDMP. The method we are introducing now is based on the use of the Markov chain $(\Theta_{n})_{n\in\N}=(Z_{n},T_{n})_{n\in\N}$ associated to the continuous-time process $(X_{t})_{t\in\R^{+}}$. 

\begin{Definition}For all $j\in \N$, introduce the sequences $(p_{k,j})_{k\geq 0}$ and $(r_{k,j})_{k\geq 0}$ defined as follows
$$\left\{\begin{array}{ll}
p_{k,j}&=\mathbf{E}_{\mu}\left[\tau^{j}\big|\tau\leq T_k\right],\\
r_{k,j}&=\mathbf{E}_{\mu}\left[\tau^{j}\1_{\{T_{k}<\tau\leq T_{k+1}\}}\right].
\end{array}\right.$$
\end{Definition}

\noindent Our objective is to approximate $p_{N,j}$ where $N$ still represents the computation horizon. Similarly to the previous section, the sequence $(p_{k,j})_{k\leq N}$ satisfies a recursion which parameters are the sequences $(q_k)_{k\leq N}$, previously introduced, and $(r_{k,j})_{k\leq N-1}$.

\begin{Proposition}\label{rec_pkj}Under Assumption \ref{hyp-tau-fini}, one has for all $k,j\in\N$, $p_{0,j} =0$ and
\begin{equation*}
p_{k+1,j} =
\left\{\begin{array}{ll}
\frac{p_{k,j}q_k+r_{k,j}}{q_{k+1}}, &\qquad\text{if $q_{k+1}\neq 0$}\\
0 & \qquad \text{otherwise.}
\end{array}\right.
\end{equation*}
\end{Proposition}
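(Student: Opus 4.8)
The plan is to mirror exactly the argument of Proposition \ref{rec_pk}, replacing the indicator $\1_{\{\tau>s\}}$ by the power $\tau^{j}$ throughout. First I would handle the base case: since $T_{0}=0$ and the support of $\mu$ is included in $U$ by Assumption \ref{hyp-tau-fini}, the event $\{\tau\leq T_{0}\}=\{\tau\leq 0\}$ has probability zero (or, in the degenerate conditional, we use the convention $p_{0,j}=0$), hence $p_{0,j}=0$.

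For the recursion, fix $k,j\in\N$ and suppose $q_{k+1}=\PP_{\mu}(\tau\leq T_{k+1})\neq 0$. The key observation, as in the proof of Proposition \ref{rec_pk}, is the disjoint decomposition
\begin{equation*}
\{\tau\leq T_{k+1}\}=\{\tau\leq T_{k}\}\ \sqcup\ \{T_{k}<\tau\leq T_{k+1}\},
\end{equation*}
which is valid because $T_{k}\leq T_{k+1}$ almost surely. Multiplying the indicators of these events by $\tau^{j}$ and taking $\E_{\mu}$, additivity of the expectation over the disjoint union gives
\begin{align*}
\E_{\mu}\bigl[\tau^{j}\1_{\{\tau\leq T_{k+1}\}}\bigr]
&=\E_{\mu}\bigl[\tau^{j}\1_{\{\tau\leq T_{k}\}}\bigr]+\E_{\mu}\bigl[\tau^{j}\1_{\{T_{k}<\tau\leq T_{k+1}\}}\bigr]\\
&=p_{k,j}\,q_{k}+r_{k,j},
\end{align*}
where the first term is rewritten using $\E_{\mu}[\tau^{j}\1_{\{\tau\leq T_{k}\}}]=\E_{\mu}[\tau^{j}\mid\tau\leq T_{k}]\,\PP_{\mu}(\tau\leq T_{k})=p_{k,j}q_{k}$ (this identity holds trivially as $0=0$ when $q_{k}=0$, using the convention), and the second is the definition of $r_{k,j}$. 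Dividing by $q_{k+1}\neq 0$ and noting $p_{k+1,j}=\E_{\mu}[\tau^{j}\mid\tau\leq T_{k+1}]=\E_{\mu}[\tau^{j}\1_{\{\tau\leq T_{k+1}\}}]/q_{k+1}$ yields the claimed formula. The case $q_{k+1}=0$ is covered by the stated convention $p_{k+1,j}=0$.

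There is essentially no obstacle here: the only mild subtlety is the bookkeeping with the convention when $q_{k}=0$, which forces $p_{k,j}=0$ and makes the term $p_{k,j}q_{k}$ vanish consistently; one should also implicitly note that $\tau^{j}$ is integrable on $\{\tau\leq T_{k}\}$, which is immediate since on that event $\tau\leq T_{k}$ and, under Assumption \ref{hyp-Tk_goes_to_infty} together with Assumption \ref{hyp_t*_bnd}, $T_{k}$ is bounded by $kC_{t^{*}}$ so all expectations appearing are finite. The proof is identical in structure to Proposition \ref{rec_pk}, so it can be kept very short.
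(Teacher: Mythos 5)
Your proof is correct and takes essentially the same approach as the paper: the paper's own proof of Proposition \ref{rec_pkj} simply states that it is similar to Proposition \ref{rec_pk}, and your argument carries out exactly that adaptation, replacing $\1_{\{\tau>s\}}$ by $\tau^{j}$, using the disjoint decomposition $\{\tau\leq T_{k+1}\}=\{\tau\leq T_{k}\}\cup\{T_{k}<\tau\leq T_{k+1}\}$ and additivity of expectation, with the base case and the $q_{k}=0$ convention handled as in the paper.
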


\begin{proof} The proof is similar to the previous one (see proposition \ref{rec_pk}).\hfill
$\Box$
\end{proof}

Before turning to the approximation method itself, let us discuss the crucial question of the computation horizon.

\subsection{The computation horizon}\label{ChoixHorizonCalcul}

In this paragraph, let us study more precisely the construction of the process $(X_t)$ in order to obtain some results concerning the jump times $(T_k)_{k\in \mathbb{N}}$. For this purpose, we introduce, in this section only two additional hypothesis. 
\begin{Assumption}\label{hyp-lambda-bornŽ} The jump rate $\lambda$ is bounded by $C_{\lambda}$.
\end{Assumption}
\begin{Assumption}\label{Hyp-Davis-Aepsilon}
There exists $\epsilon>0$ such that for all $x\in E$, $Q(x,A_\epsilon)=1$ where $A_\epsilon=\left\{x\in E \text{ such that } t^*(x)\geq\epsilon\right\}$. Roughly speaking, the jumps cannot send the process too close to the boundary of $E$.
\end{Assumption}
Assumption \ref{hyp-lambda-bornŽ} is satisfied in a large majority of applications ; Assumption \ref{Hyp-Davis-Aepsilon} is quite general too and was introduced by Davis in \cite{davis}, section 24.\\

 Let $(\Omega,\mathcal{A},\mathbf{P})$ be a probability space on which is defined a sequence $(\Pi_k)_{k\in \mathbb{N}}$ of independent random variables with uniform distribution on $[0;1]$. Let $x=(m,\xi)\in E$ and $\omega\in \Omega$ and let us focus on the construction of the trajectory $\{X_t(\omega)\text{, } t>0\}$ of the process starting from point $x$. Let
$$F(t,x)=
\left\{\begin{array}{ll}
1 & \text{if } t\leq0,\\
\text{exp}\left(-\int_0^t\lambda(m,\Phi_m(\xi,s))ds\right) & \text{if } 0\leq t<t^*(x), \\
0 & \text{if }t\geq t^*(x).
\end{array}\right.$$
It is the survival function of the first jump time $T_1$. Define then its generalized inverse:
$$\Psi(u,x)=
\left\{\begin{array}{ll}
\text{inf}\{t\geq 0 : F(t,x)\leq u\},&\\
+\infty & \text{if the above set is empty.}
\end{array}\right.
$$
Let then $S_1(\omega)=T_1(\omega)=\Psi(\Pi_1(\omega),x)$ and for all $t<T_1(\omega)$,
$$X_t(\omega)=(m,\Phi_m(\xi,t)).$$
If $T_1(\omega)<+\infty$, choose $X_{T_1}$ with distribution $Q(.,\Phi_m(\xi,T_1))$. Assume the trajectory is constructed until time $T_k$. If $T_k(\omega)<+\infty$, let
\begin{align*}
&S_{k+1}(\omega)=\Psi(\Pi_k(\omega),X_{T_k}),\\
&T_{k+1}(\omega)=T_{k}(\omega)+S_{k+1}(\omega).
\end{align*}
If $T_{k+1}(\omega)<+\infty$, choose $X_{T_{k+1}}$ with distribution $Q(.,\Phi_{m_{T_k}}(\xi_{T_k},S_{k+1}))$. The trajectory is finally constructed by induction.\\
With the same notations as above, we may then state the following lemma :
\begin{Lemma}\label{LemmeSkTilde}
Let $H$ be a survival function such that for all $t\in \mathbb{R}$ and for all $x\in E$, $H(t)\leq F(t,x)$. There exists a sequence of independent random variables $(\widetilde{S}_k)_{k\in \mathbb{N}}$ with distribution $H$ and such that
$$\forall K\in \mathbb{R} \text{, }\forall N\in \mathbb{N}\text{, }\mathbf{P}_{\mu}(T_N<K)\leq \mathbf{P}_{\mu}(\widetilde{T}_N<K).$$
where $\widetilde{T}_N=\sum_{k=0}^N\widetilde{S}_k$.
\end{Lemma}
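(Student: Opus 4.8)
The idea is to build the $\widetilde S_k$ on the \emph{same} probability space $(\Omega,\mathcal A,\mathbf P)$ as the original trajectory, using the same driving sequence $(\Pi_k)_{k\in\N}$ of i.i.d.\ uniforms, via the generalized-inverse (quantile) construction. Concretely, let $G$ denote the generalized inverse of $H$, i.e.\ $G(u)=\inf\{t\ge 0:\ H(t)\le u\}$ (with $G(u)=+\infty$ if the set is empty), exactly mirroring the definition of $\Psi(\cdot,x)$ from $F(\cdot,x)$. Then set $\widetilde S_k(\omega)=G(\Pi_k(\omega))$ for each $k$. Since the $\Pi_k$ are independent and uniform on $[0,1]$, the $\widetilde S_k$ are independent, and the standard quantile-transform fact gives that each $\widetilde S_k$ has survival function $H$; I would record this as the first step.

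\textbf{Step 2: pathwise domination $\widetilde S_k\le S_k$.} The key monotonicity input is that $\Psi(u,x)$ is nonincreasing in the function argument in the following sense: if $H(t)\le F(t,x)$ for all $t$, then for every $u\in[0,1]$,
\begin{equation*}
G(u)=\inf\{t\ge 0:\ H(t)\le u\}\ \le\ \inf\{t\ge 0:\ F(t,x)\le u\}=\Psi(u,x),
\end{equation*}
because the set $\{t:\ F(t,x)\le u\}$ is contained in $\{t:\ H(t)\le u\}$ (a smaller survival function reaches the level $u$ no later). Applying this with $u=\Pi_k(\omega)$ and $x=X_{T_k}(\omega)$ — which is legitimate since $H(t)\le F(t,x)$ is assumed for \emph{all} $x\in E$ — yields $\widetilde S_k(\omega)=G(\Pi_k(\omega))\le \Psi(\Pi_k(\omega),X_{T_k}(\omega))=S_{k+1}(\omega)$ for every $\omega$ and every $k\ge 1$, and similarly $\widetilde S_0,\widetilde S_1$ handle the $T_1$ step (with the convention $S_0=\widetilde S_0=0$ or the obvious shift). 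I will be slightly careful about the index bookkeeping so that $\widetilde T_N=\sum_{k=0}^N\widetilde S_k$ is compared to $T_N=\sum_{k=1}^N S_k$; since each summand of the former is dominated by a corresponding summand of the latter, we get $\widetilde T_N(\omega)\le T_N(\omega)$ pointwise.

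\textbf{Step 3: conclusion.} From $\widetilde T_N\le T_N$ $\mathbf P_\mu$-almost surely, the event inclusion $\{T_N<K\}\subset\{\widetilde T_N<K\}$ holds for every $K\in\R$, hence $\mathbf P_\mu(T_N<K)\le\mathbf P_\mu(\widetilde T_N<K)$, which is the claim. Note the argument is uniform in the initial law $\mu$ because the domination is pathwise and does not depend on the starting point.

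\textbf{Main obstacle.} There is no deep difficulty; the only genuinely delicate point is the monotonicity of the generalized inverse and the measurability/index-shift bookkeeping. One must check that the comparison $G(u)\le\Psi(u,x)$ survives the edge cases where $F(\cdot,x)$ drops to $0$ at $t^*(x)$ (so $\Psi(u,x)\le t^*(x)<\infty$ while $G(u)$ could be finite or infinite) and where $u=0$ or $u=1$; in all these cases the set-inclusion argument still gives the inequality in $[0,+\infty]$. The other point requiring a line of care is that applying the bound with the random argument $x=X_{T_k}(\omega)$ is valid precisely because the hypothesis $H(t)\le F(t,x)$ is assumed for every $x\in E$, so no measurable selection issue arises — the inequality holds $\omega$ by $\omega$.
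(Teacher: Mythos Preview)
Your proposal is correct and follows essentially the same argument as the paper: define the generalized inverse of $H$, set $\widetilde S_k=G(\Pi_k)$ using the same driving uniforms, deduce the pathwise inequality $\widetilde S_k\le S_k$ from $H\le F(\cdot,x)$ via the monotonicity of the generalized inverse, and conclude $\widetilde T_N\le T_N$ a.s. The paper's proof is terser (it states the key inequality $\widetilde\Psi(u)\le\Psi(u,x)$ without the set-inclusion justification and does not discuss the index bookkeeping), but the content is the same.
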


\begin{proof}
Let $H$ be such a survival function and let $\widetilde{\Psi}$ be its generalized inverse i.e.
$$\widetilde{\Psi}(u)=
\left\{\begin{array}{ll}
\text{inf}\{t\geq 0 : H(t)\leq u\},&\\
+\infty & \text{if the above set is empty.}
\end{array}\right.
$$
The assumption made on $H$ yields for all $x\in E$, $\widetilde{\Psi}(u)\leq \Psi(u,x)$. Let for all $k\in \mathbb{N}$ and for all $\omega\in\Omega$,
$$\widetilde{S}_k(\omega)=\widetilde{\Psi}(\Pi_k(\omega)).$$
Notice that we are using the same $\Pi_k$ as in the definition of $S_k$, allowing us to write that $\widetilde{S}_k\leq S_k$ a.s. and therefore $\widetilde{T}_k\leq T_k$ a.s. The result follows.
\hfill$\Box$
\end{proof}

Similarly to M.H.A. Davis (Section 33 in \cite{davis}), we approximated $\tau$ by $\tau\wedge T_N$ since $\tau\wedge T_N\rightarrow \tau$ as $N\rightarrow +\infty$ thanks to Assumption \ref{hyp-Tk_goes_to_infty}. It is therefore necessary to choose $N$ large enough such that $\mathbf{P}_{\mu}(T_N<\tau)$ be small. It is tough to estimate this probability for a general process because the links between $\tau$ and the jump times are largely problem-dependant. For instance, the geometry of $U$ can be very complex. Therefore, $N$ will generally be estimated through simulations. Indeed, one may compute $\mathbf{P}_{\mu}(T_N<\tau)$ for some fixed $N$ thanks to a Monte-Carlo method and increase the value of $N$ until this probability becomes small enough.\\
However, we introduce an other method to bound this probability that may prove useful in applications. First, notice that, for any $K>0$,
$$\{T_N<\tau\}\subset\{T_N<K\}\cup\{\tau>K\}.$$
This implies that $$\mathbf{P}_{\mu}(T_N<\tau)\leq \mathbf{P}_{\mu}(T_N<K)+\mathbf{P}_{\mu}(\tau>K).$$
This will prove especially useful whenever $\tau$ is bounded, which happens quite often in application, because there exists then $K$ such that $\mathbf{P}_{\mu}(\tau>K)=0$. On the contrary, when $\tau$ is not bounded, it remains however sometimes possible to obtain $K$ such that $\mathbf{P}_{\mu}(\tau>K)$ be small.
\subsubsection*{Example 1 : a crack propagation model}\label{exemple1}\textit{
We adapt here an example studied by J. Chiquet and N. Limnios in \cite{chiquet}, which models a crack propagation. $Y_t$ is a real-valued process representing the crack size satisfying :
$$\left\{\begin{array}{l}
Y_0>0\\
\dot{Y}_t=A_t Y_t \text{ for all $t \geq 0$.}
\end{array}\right.
$$
where $A_t$ is a Markov process with state space $\{\alpha,\beta\}$ where $0<\alpha \leq \beta$. We are interested in the time $\tau$ before the crack size reaches a critical size $y_c$. $X_{t}=(A_{t},Y_{t})$ is a PDMP, $A_{t}$ representing the mode at time $t$. It is possible to bound the exit time by considering the slowest flow : one clearly has for all $t\geq 0$, $Y_t\geq Y_0 e^{\alpha t}$ and thus $\mathbf{P}_{\mu}(\tau>\frac{1}{\alpha}\ln(\frac{y_c}{Y_0}))=0$.}
\paragraph{}
We now intend to bound $\mathbf{P}_{\mu}(T_N<K)$ for a fixed $K>0$. Let

$$H(t)=
\left\{\begin{array}{ll}
1 & \text{if } t\leq0,\\
e^{-C_\lambda t} & \text{if } 0\leq t<\epsilon, \\
0 & \text{if }t\geq \epsilon.
\end{array}\right.$$

Distribution $H$ represents, roughly speaking, the worst distribution of the inter-jump times $S_k$ in the sense that it is the one that implies the most frequent jumps. Indeed, denote $F_k$ the survival function of $S_k$, one has $H\leq F_k$ for all $k\in \mathbb{N}$. Therefore, Lemma \ref{LemmeSkTilde} provides a random variable $\widetilde{T}_N=\sum_{k=0}^N\widetilde{S}_k$ where $\widetilde{S}_k$ are independent and have survival function $H$ such that $$\mathbf{P}_{\mu}(T_N<K)\leq \mathbf{P}_{\mu}(\widetilde{T}_N<K).$$

\noindent We now bound $\mathbf{P}_{\mu}(\widetilde{T}_N<K)$. Standard computations yield $\E_{\mu}[\widetilde{T}_{N}]=Nm$ and $\mathbf{V}_{\mu}[\widetilde{T}_{N}]=N\sigma^{2}$ where :
\begin{align*}
m:=\E_{\mu}[\widetilde{S}_{1}]    &=  \frac{1}{C_{\lambda}}\left(1-e^{-C_{\lambda}\epsilon}\right), \\
\sigma^{2}:=\mathbf{V}_{\mu}[\widetilde{S}_{1}]    &  = \frac{1}{C_{\lambda}^{2}}\left(1-2C_{\lambda}\epsilon e^{-C_{\lambda}\epsilon}-e^{-2C_{\lambda}\epsilon}\right).
\end{align*}
Assume now that $N$ is such that $Nm>K$ and notice that $$\mathbf{P}_{\mu}(\widetilde{T}_N<K)    \leq  \mathbf{P}_{\mu}\left(\big|\widetilde{T}_N-\E_{\mu}[\widetilde{T}_N]\big|>\E_{\mu}[\widetilde{T}_{N}]-K\right),$$ Tchebychev inequality yields :
\begin{equation*}
\mathbf{P}_{\mu}(\widetilde{T}_N<K)    \leq \frac{N\sigma^{2}}{\left(Nm-K\right)^{2}}.
\end{equation*}
and that the right-hand side term goes to zero when $N$ goes to infinity.
\paragraph{}Finally, when $\tau$ is bounded with a high probability and when Assumptions \ref{hyp-lambda-bornŽ} and \ref{Hyp-Davis-Aepsilon} are fulfilled, we are able to choose $N$ \textit{a priori} such that $\mathbf{P}_{\mu}(T_N<\tau)$ be small. These conditions are satisfied in a large class of applications.

\section{Approximation scheme}\label{section-approximation}

\subsection{The quantization algorithm}
First of all, let us describe the quantization procedure for a random variable and recall some important properties that will be used in the sequel. There exists an extensive literature on quantization methods for random variables and processes. We do not pretend to present here an exhaustive panorama of these methods. However, the interested reader may for instance, consult the following works \cite{quantif1,gray,quantif} and references therein.
Consider $X$ an $\mathbb{R}^q$-valued random variable such that $\big\| X \big\|_p < \infty$ where $\big\| X \big\|_p$ denotes the $L_{p}$-nom of $X$: $\big\| X \big\|_p=\Big( \mathbb{E}[|X|^{p}]\Big)^{1/p}$. 

\bigskip

\noindent
Let $K$ be a fixed integer, the optimal $L_{p}$-quantization of the random variable $X$ consists in finding the best possible $L_{p}$-approximation of $X$ by a random vector $\widehat{X}$ taking at most $K$ values: $\widehat{X}\in \{x^{1},\ldots,x^{K}\}$.
This procedure consists in the following two steps:
\begin{enumerate}
\item Find a finite weighted grid $\Gamma\subset \mathbb{R}^q$ with $\Gamma= \{x^{1},\ldots,x^{K}\}$.
\item Set $\widehat{X}=\widehat{X}^{\Gamma}$ where $\widehat{X}^{\Gamma}=proj_{\Gamma}(X)$ with $p_{\Gamma}$ denotes the closest neighbour projection on $\Gamma$.
\end{enumerate}

\noindent
The asymptotic properties of the $L_{p}$-quantization are given by the following result, see e.g. \cite{quantif}.
\begin{Theorem}
\label{theore}
If $\mathbb{E}[|X|^{p+\eta}]<+\infty$ for some $\eta>0$ then one has
\begin{eqnarray*}
\lim_{K\rightarrow \infty} K^{p/q} \min_{|\Gamma|\leq K} \| X-\widehat{X}^{\Gamma}\|^{p}_{p}& =& J_{p,q} 
\int |h|^{q/(q+p)}(u)du,
\end{eqnarray*}
where the law of $X$ is $P_{X}(du)=h(u) \lambda_{q}(du)+\nu$ with $\nu\perp\lambda_{d}$, $J_{p,d}$ a constant and $ \lambda_{q}$ the Lebesgue measure in $\mathbb{R}^{q}$.
\end{Theorem}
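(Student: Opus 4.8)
The plan is to follow the classical route to Zador's theorem (see also Bucklew--Wise, Graf--Luschgy). Write $e_{K,p}(X)^{p}=\min_{|\Gamma|\le K}\|X-\widehat X^{\Gamma}\|_{p}^{p}$ for the $K$-th quantization error. The first step is to treat $X$ uniformly distributed on the unit cube $[0,1]^{q}$: here one shows that $K^{p/q}e_{K,p}(X)^{p}$ converges to a finite positive constant, and this limit \emph{defines} $J_{p,q}$. Existence of the limit follows from a subadditivity argument: cutting $[0,1]^{q}$ into $\ell^{q}$ congruent subcubes and distributing the codebook evenly among them produces an approximate subadditivity relation for the normalized optimal errors, so Fekete's lemma applies; finiteness and strict positivity come from elementary bounds (a regular grid for the upper bound, a volume/packing estimate for the lower bound).

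The second step extends the formula to compactly supported laws with a bounded density $h$, first for $h$ a step function and then by approximation. Partition the support into small cubes $C_{i}$ of side $\delta$ on which $h\approx h_{i}$; on $C_{i}$ the problem reduces, up to $O(\delta)$ corrections, to a rescaled uniform law, whose optimal $K_{i}$-point error behaves like $J_{p,q}\,h_{i}\,\lambda_{q}(C_{i})\,K_{i}^{-p/q}$. Minimizing $\sum_{i}J_{p,q}h_{i}\lambda_{q}(C_{i})K_{i}^{-p/q}$ over integer allocations with $\sum_{i}K_{i}\le K$ — the continuous relaxation is solved by $K_{i}\propto (h_{i}\lambda_{q}(C_{i}))^{q/(q+p)}$, and a H\"older/Jensen computation produces exactly the exponent $q/(q+p)$ and the integral $\int h^{q/(q+p)}$ as $\delta\to 0$. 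This gives $\limsup_{K}K^{p/q}e_{K,p}(X)^{p}\le J_{p,q}\int h^{q/(q+p)}$. The matching lower bound uses that the restriction to $C_{i}$ of any global $K$-point codebook is a codebook for $X$ conditioned on $\{X\in C_{i}\}$, hence $e_{K,p}(X)^{p}\ge\sum_{i}(\text{optimal error on }C_{i})$, after which one re-optimizes the allocation.

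The third step removes boundedness and compact support for a general $P_{X}=h\,\lambda_{q}+\nu$ with $\nu\perp\lambda_{q}$. One truncates: $X=X\1_{\{|X|\le R\}}+X\1_{\{|X|>R\}}$, controls the tail contribution to $e_{K,p}(X)^{p}$ by $\E[|X|^{p}\1_{\{|X|>R\}}]$, approximates $h\1_{\{|\cdot|\le R\}}$ in $L^{q/(q+p)}$ by bounded step functions using continuity of $t\mapsto\int t^{q/(q+p)}$ under $L^{1}$-type perturbations, and invokes the previous step. Finally the singular part is shown to be negligible: since $\nu$ is carried by a $\lambda_{q}$-null set, for any $\varepsilon>0$ one covers a set of $\nu$-mass $\ge 1-\varepsilon$ by finitely many small balls and spends a vanishing fraction of the codepoint budget there, so $\nu$ contributes $o(K^{-p/q})$ and disappears from the limit.

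The main obstacle is the interchange of the limits $K\to\infty$ and $\delta\to 0$ — equivalently, making the step-function and truncation approximations uniform in $K$: one must show that replacing $h$ by a step function and discarding $\{|X|>R\}$ costs little \emph{after} multiplication by $K^{p/q}$, uniformly in $K$. This is precisely where the strengthened hypothesis $\E[|X|^{p+\eta}]<\infty$ (rather than merely $\E[|X|^{p}]<\infty$) is needed: an interpolation estimate bounds the number of codepoints required to handle the tail, so that an $o(K)$-sized sub-budget suffices there. The lower bound also requires care, since summing the per-cube optimal errors discards the interaction between cubes; the sharp constant is recovered only after optimizing the allocation and letting $\delta\to 0$, applying the uniform-cube case uniformly over the bounded range of local densities $h_{i}$.
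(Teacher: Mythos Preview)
The paper does not prove this statement at all: it is quoted as a known result from the literature, with the reference ``see e.g.\ \cite{quantif}'' (Pag\`es--Pham--Printems), and no argument is given. So there is no ``paper's own proof'' to compare against.

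Your sketch is the standard route to Zador's theorem (uniform cube via subadditivity/Fekete, step densities via allocation and H\"older, general densities via truncation using the $(p+\eta)$-moment, singular part negligible), and it is essentially the argument one finds in Graf--Luschgy or Bucklew--Wise. As an outline it is correct; if you were asked to supply a full proof you would still have to make the uniformity-in-$K$ of the step-function and truncation approximations quantitative, which you correctly flag as the delicate point.
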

Remark that $X$ needs to have finite moments up to the order $p+\eta$ to ensure the above convergence.
There exists a similar procedure for the optimal quantization of a Markov chain $\{X_{k}\}_{k\in \N}$. There are two approaches to provide the quantized approximation of a Markov chain. The first one, based on the quantization at each time $k$ of the random variable $X_{k}$ is called the \textit{marginal quantization}. The second one that enhances the preservation of the Markov property is called \textit{Markovian quantization}. Remark that for the latter, the quantized Markov process is not homogeneous. These two methods are described in details in \cite[section 3]{quantif}. In this work, we used the marginal quantization approach for simplicity reasons.

Our approximation methods are based on the quantization of the underlying discrete time Markov chain $(\Theta_k)_{k\leq N}=(Z_k,T_k)_{k\leq N}$.
The quantization algorithm provides for each time step $0\leq k\leq N$ a finite grid $\Gamma_k$ of $E\times\R^+$ as well as the transition matrices $(\widehat{Q}_k)_{0\leq k\leq N-1}$ from $\Gamma_k$ to $\Gamma_{k+1}$. Let~$p~\geq~1$ such that for all $k\leq N$, $Z_k$ and $T_k$ have finite moments at least up to order~$p$ and let $proj_{\Gamma_{k}}$ be the nearest-neighbor projection from $E\times\R^+$ onto $\Gamma_k$. The quantized process $(\widehat{\Theta}_k)_{k\leq N}=(\widehat{Z}_k,\widehat{T}_k)_{k\leq N}$ with value for each $k$ in the finite grid $\Gamma_k$ of $E\times\R^+$ is then defined by
\begin{equation}(\widehat{Z}_k,\widehat{T}_k)=proj_{\Gamma_{k}}(Z_k,T_k).\label{Z_n hat Z_n-mesurable}\end{equation}


In practice, we begin with the computation of the quantization grids which merely requires to be able to simulate the process. These grids are only computed once and for all and may be stored off-line. Our schemes are then based on the following simple idea: we replace the process by its quantized approximation within the different recursions. The results are obtained in a very simple way since the quantized process has finite state space.

\begin{Remark}\label{rmq-U-convex} In addition, we recall a technical property of the quantization algorithm proved by C. Bouton and G. Pagès in \cite{BoutonPags} : the quantized process evolves within the convex hull of the support of the law of the original process. Therefore, and it will be required below, Assumption \ref{hyp-U-convex} yields that, if $Z_{k}\in U$ a.s. for some $k\in \{0,...,N\}$ then $\widehat{Z}_{k}\in U$ a.s.
\end{Remark}

\subsection{Approximation scheme of the distribution and proof of convergence}

We already noticed in Proposition \ref{rec_pk} that $p_N(s)=\PP_\mu(\tau>s|\tau\leq T_N)$ may be computed as soon as the sequences $(q_k)_{k\leq N}$ and $(r_k)_{k\leq N-1}$ are known. Therefore, we will find expressions of these sequences depending on the Markov chain $(Z_k,T_k)_{k\leq N}$ that we will replace by the quantized process $(\widehat{Z}_k,\widehat{T}_k)_{k\leq N}$ in order to define their quantized approximations $(\widehat{q}_k)_{k\leq N}$ and $(\widehat{r}_k)_{k\leq N-1}$.\\

First, notice that $\{T_k<\tau\}=\{Z_k\in U\}$ and $\{\tau\leq T_k\}=\{Z_k \not\in U\}$ thanks to Assumption \ref{hypNonRetourU}. Moreover, on $\{Z_k\in U,Z_{k+1} \not\in U\}$, one has $\tau=(T_k+u^*(Z_k))\wedge T_{k+1}$ a.s. where $u^*(x)$ is the deterministic exit time from $U$ starting from the point $x$ (see Definition \ref{Defu*}), and one has :

\begin{equation}\label{Def-qk-rk}
\left\{\begin{array}{ll}
q_k&=\E_{\mu}[\1_{U^c}(Z_k)],\\
r_k(s)&=\E_{\mu}[\1_{\{(T_k+u^*(Z_k))\wedge T_{k+1}>s\}}\1_{U}(Z_k)\1_{U^c}(Z_{k+1})].
\end{array}\right.
\end{equation}
The above equations are crucial in our discussion and, from now on, we will use them without referring to Assumption \ref{hypNonRetourU}.

Before turning to the approximation scheme itself, let us state some properties of the sequence $(q_{k})_{k\leq N}$ that will be important in the following proofs. Indeed, the sequence $(q_{k})_{k}$ increases since $\{\tau\leq T_k\}\subset\{\tau\leq T_{k+1}\}$ for all $k\leq N-1$. Moreover, note that $q_0=0$ and $\lim_{n\rightarrow +\infty}q_{n}=1$ thanks to Assumption \ref{hyp-tau-fini}. Therefore, there exists an index denoted $\tilde{k}\geq 1$ such that
\begin{itemize}
\item{for all $k<\tilde k$, one has $q_{k}=0$,}
\item{for all $k\geq\tilde k$, one has $q_{k}>0$.}
\end{itemize}
We denote $\tilde{q}=q_{\tilde{k}}$ the first positive value of the sequence so that $q_k\geq \tilde{q}$ for all $k\geq\tilde{k}$. One obtains then the following definition.

\begin{Definition}\label{q-tilde} Let 
$$\tilde{k}=\inf\left\{k\geq 0 \text{ such that } q_{k}>0 \right\},$$
$$\tilde{q}=q_{\tilde{k}}$$
i.e. $\tilde{q}$ is the first strictly positive value of the sequence $(q_{k})_{k\in\{0,...,N\}}$.
\end{Definition}

We now naturally define the quantized approximations of the previous sequences.
\begin{Definition}
For all $s>0$, define the sequences $(\widehat{q}_{k})_{k\in\{0,...,N\}}$ and $(\widehat{r}_{k})_{k\in\{0,...,N-1\}}$ by:
\begin{equation*}\left\{
\begin{array}{ll}
\widehat{q}_{k}&=\E_{\mu}[\1_{U^{c}}(\widehat{Z}_k)],\\
\widehat{r}_{k}(s)&=\E_{\mu}[\1_{\{(\widehat{T}_k+u^*(\widehat{Z}_k))\wedge \widehat{T}_{k+1}>s\}}\1_{U}(\widehat{Z}_k)\1_{U^c}(\widehat{Z}_{k+1})].
\end{array}\right.
\end{equation*}
\end{Definition}
It is important to notice that both $\widehat{q}_{k}$ and $\widehat{r}_{k}(s)$ may be computed easily from the quantization algorithm. Indeed, one has :
$$\widehat{q}_{k}=\sum_{\begin{footnotesize}\begin{array}{c}\theta=(z,t)\in\Gamma_k\\ z\not\in U\end{array}\end{footnotesize}}\mathbf{P}(\widehat{\Theta}_k=\theta),$$

$$\widehat{r}_{k}(s)=\sum_{\begin{footnotesize} \begin{array}{c}\theta=(z,t)\in\Gamma_k\\ z\in U\end{array}\end{footnotesize}}\sum_{\begin{footnotesize}\begin{array}{c}\theta'=(z',t')\in\Gamma_{k+1}\\z'\not\in U\end{array}\end{footnotesize}}  \mathbbm{1}_{\begin{large}\{(t+u^*(z))\wedge t'>s\}\end{large}}\mathbf{P}(\widehat{\Theta}_k=\theta)\widehat{Q}_k(\theta;\theta').$$

Recall from Proposition \ref{rec_pk} that the sequence $(p_k)_{k\leq N}$ satisfies a recursion depending on two parameters: $(q_k)_{k\leq N}$ and $(r_k)_{k\leq N-1}$, that we are now able to approximate. Hence, replacing them by their quantized approximations within the same recursion leads to a new sequence denoted $(\widehat{p}_k)_{k\leq N}$. The rest of this section is dedicated to the proof of the convergence of $(\widehat{p}_k)_{k\leq N}$ towards $(p_k)_{k\leq N}$. This convergence is far from being trivial because on the one hand, the definitions of the sequences $(q_k)_{k\leq N}$ and $(r_k)_{k\leq N-1}$ contain many indicator functions that are not Lipschitz continuous and on the other hand, the recursive function giving $p_{k+1}$ from $p_k$, $q_k$, $q_{k+1}$ and $r_k$ is not Lipschitz continuous either.

\begin{Definition}\label{def-rec-hat-pk}
For all $s>0$ and for all $k\in\{0,...,N-1\}$, let $\widehat{p}_{0}(s) =0$ and
\begin{equation}
\widehat{p}_{k+1}(s) =
\left\{\begin{array}{ll}
\frac{\widehat{p}_{k}(s)\widehat{q}_{k}+\widehat{r}_{k}(s)}{\widehat{q}_{k+1}}, &\qquad\text{if $\widehat{q}_{k+1}\neq 0$}\\
0 & \qquad \text{otherwise.}
\end{array}\right.
\end{equation}
\end{Definition}

The two following propositions will be necessary to prove the convergence of the approximation scheme. They respectively state the convergence of $(\widehat{q}_k)_{k\leq N}$ and $(\widehat{r}_k)_{k\leq N-1}$ towards $(q_k)_{k\leq N}$ and $(r_k)_{k\leq N-1}$.

\begin{Proposition}\label{conv_qk}Under Assumptions \ref{hyp-proba-U-apha} and \ref{hypNonRetourU}, for all $k\in\{0,...,N\}$, $\widehat{q}_{k}$ converges towards $q_k$ when the quantization error $\|\Theta_k-\widehat{\Theta}_k\|_p$ goes to zero. More precisely, the error is bounded by
$$|q_{k}-\widehat{q}_{k}|\leq C^{\frac{p}{p+\beta}}\left(\left(\frac{\beta}{p}\right)^{\frac{p}{p+\beta}}+\left(\frac{p}{\beta}\right)^{\frac{\beta}{p+\beta}}\right) \|Z_{k}-\widehat{Z}_{k}\|_{p}^{\frac{p\beta}{p+\beta}},$$
where $C$ and $\beta$ are defined in Assumption \ref{hyp-proba-U-apha}.
\end{Proposition}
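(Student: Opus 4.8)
The plan is to bound the difference $|q_k - \widehat q_k|$ where $q_k = \E_\mu[\1_{U^c}(Z_k)]$ and $\widehat q_k = \E_\mu[\1_{U^c}(\widehat Z_k)]$. The obvious obstacle is that $\1_{U^c}$ is not Lipschitz (not even continuous), so we cannot simply write $|q_k - \widehat q_k| \leq [\1_{U^c}]\,\|Z_k - \widehat Z_k\|_p$. The standard trick is that $\1_{U^c}(Z_k)$ and $\1_{U^c}(\widehat Z_k)$ can only differ when $Z_k$ and $\widehat Z_k$ lie on opposite sides of $\partial U$, which forces at least one of them to be close to the boundary; Assumption \ref{hyp-proba-U-apha} controls exactly that probability.

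First I would write
$$|q_k - \widehat q_k| = \big|\E_\mu[\1_{U^c}(Z_k) - \1_{U^c}(\widehat Z_k)]\big| \leq \E_\mu\big[\1_{\{\1_{U^c}(Z_k)\neq \1_{U^c}(\widehat Z_k)\}}\big] = \PP_\mu\big(\1_{U^c}(Z_k)\neq \1_{U^c}(\widehat Z_k)\big).$$
Next, fix a threshold $\alpha > 0$ and split this event according to whether $Z_k \in U^\alpha$ or not. On the event $\{Z_k \notin U^\alpha\}$, i.e. $d(Z_k,\partial U) > \alpha$, if moreover $|Z_k - \widehat Z_k| \leq \alpha$ then $Z_k$ and $\widehat Z_k$ lie on the same side of $\partial U$ (here one uses that $\widehat Z_k \in U$ whenever $Z_k \in U$, and symmetrically, via Remark \ref{rmq-U-convex} together with Assumption \ref{hypNonRetourU}, which gives $\{Z_k \in U\} = \{T_k < \tau\}$ and $\widehat Z_k$ staying in the convex hull of the support), so the indicators agree. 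Hence
$$\PP_\mu\big(\1_{U^c}(Z_k)\neq \1_{U^c}(\widehat Z_k)\big) \leq \PP_\mu(Z_k \in U^\alpha) + \PP_\mu(|Z_k - \widehat Z_k| > \alpha) \leq C\alpha^\beta + \frac{\|Z_k - \widehat Z_k\|_p^p}{\alpha^p},$$
using Assumption \ref{hyp-proba-U-apha} for the first term and Markov's inequality for the second.

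Finally I would optimize over $\alpha$. Setting $g(\alpha) = C\alpha^\beta + \|Z_k - \widehat Z_k\|_p^p\,\alpha^{-p}$ and solving $g'(\alpha) = 0$ gives $\alpha_* = \big(\tfrac{p}{C\beta}\big)^{1/(p+\beta)} \|Z_k - \widehat Z_k\|_p^{p/(p+\beta)}$; substituting back yields $g(\alpha_*) = C^{p/(p+\beta)}\big((\beta/p)^{p/(p+\beta)} + (p/\beta)^{\beta/(p+\beta)}\big)\|Z_k - \widehat Z_k\|_p^{p\beta/(p+\beta)}$, which is exactly the claimed bound. Since $\|Z_k - \widehat Z_k\|_p \to 0$ as the quantization error goes to zero, convergence of $\widehat q_k$ to $q_k$ follows. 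The only genuinely delicate point is the geometric step asserting that when $d(Z_k,\partial U) > \alpha$ and $|Z_k - \widehat Z_k| \leq \alpha$ the two points lie on the same side of the boundary; this is where the convexity of $U_m$ (Assumption \ref{hyp-U-convex}) and the property that $\widehat Z_k$ remains in the convex hull of the support of $Z_k$ (Remark \ref{rmq-U-convex}) are used to rule out the pathological case $Z_k \in U$, $\widehat Z_k \notin U$. Everything else is the routine boundary-layer-plus-Markov estimate and the one-variable optimization.
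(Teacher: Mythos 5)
Your decomposition, the threshold argument, and the one-variable optimization all match the paper's proof exactly, and the final bound is correct. The one place where you go astray is the justification of the geometric step, i.e. that on $\{Z_k\notin U^\alpha\}\cap\{|Z_k-\widehat Z_k|\le\alpha\}$ the two indicators must agree. You attribute this to convexity of $U_m$ together with Remark \ref{rmq-U-convex}, and in the course of doing so you assert ``$\widehat Z_k\in U$ whenever $Z_k\in U$''. That assertion is false as a pointwise statement: Remark \ref{rmq-U-convex} only says that if $Z_k\in U$ \emph{almost surely} (i.e.\ $q_k=0$) then $\widehat Z_k\in U$ almost surely --- it is a global statement about supports, not an implication along sample paths. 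Indeed, on an event of positive probability one can perfectly well have $Z_k\in U$ and $\widehat Z_k\notin U$; this is precisely what the boundary-layer term is there to control. Convexity of $U_m$ and Remark \ref{rmq-U-convex} are used in the paper only later, in Proposition \ref{lemme pi_{k} rho_{k}}, to handle the degenerate case $q_k=0$; neither is invoked in the proof of Proposition \ref{conv_qk}, whose hypotheses do not even include Assumption \ref{hyp-U-convex}.

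The geometric step is in fact a purely metric/topological observation requiring no convexity at all: if $|Z_k-\widehat Z_k|\le\alpha<\infty$ then $Z_k$ and $\widehat Z_k$ lie in the same mode $m$, and if one is in $U_m$ and the other in $U_m^c$, then the segment joining them in $\R^{d(m)}$ must meet $\partial U_m$ (a connected set not meeting $\partial U_m$ lies entirely in $\mathrm{int}(U_m)$ or entirely in $\mathrm{int}(U_m^c)$), so $d(Z_k,\partial U)\le|Z_k-\widehat Z_k|\le\alpha$, contradicting $Z_k\notin U^\alpha$. This is exactly the contrapositive the paper states without elaboration. So your bound and method are right, but the explanation of why the indicators agree off the boundary layer should be replaced by this simple metric argument, and the appeals to Assumption \ref{hyp-U-convex} and Remark \ref{rmq-U-convex} should be dropped from this proof. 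Assumption \ref{hypNonRetourU} enters only through the representation $q_k=\E_\mu[\1_{U^c}(Z_k)]$ from \eqref{Def-qk-rk}, which you do use correctly.
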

\begin{proof} For all $k\in\{0,...,N\}$, equation \eqref{Def-qk-rk} yields
$$|q_{k}-\widehat{q}_{k}|=|\E_{\mu}[\1_{U}(Z_{k})-\1_{U}(\widehat{Z}_{k})]|.$$
The difference of the indicator functions is non zero if and only if $Z_{k}$ and $\widehat{Z}_{k}$ are on either side of $\partial U$. Therefore, in this case, for all $\alpha>0$, if $\big|Z_{k}-\widehat{Z}_{k}\big|\leq\alpha$, then $d(Z_{k},\partial U)\leq \alpha$. Hence, either $\big|Z_{k}-\widehat{Z}_{k}\big|>\alpha$ or $Z_{k}\in U^{\alpha}$. Markov inequality and Assumption \ref{hyp-proba-U-apha} yield~:
\begin{align*}
\E_{\mu}\big|\1_{U}(Z_{k})-\1_{U}(\widehat{Z}_{k})\big|& \leq \PP_{\mu}\big(\big|Z_{k}-\widehat{Z}_{k}\big|>\alpha\big)+
\PP_{\mu}\big(Z_{k}\in U^{\alpha} \big)\\
&\leq  \frac{\|Z_{k}-\widehat{Z}_{k}\|_{p}^{p}}{\alpha^{p}} +C\alpha^{\beta}.
\end{align*}
This bound reaches a minimum when $\alpha=\left(\frac{p\|Z_{k}-\widehat{Z}_{k}\|_{p}^{p}}{\beta C}\right)^{\frac{1}{p+\beta}}$ and the result follows.\hfill
$\Box$
\end{proof}

\begin{Proposition}\label{conv_rk}Under Assumptions \ref{hyp_u*_lip}.a, \ref{hyp-proba-U-apha} and \ref{hypNonRetourU}, for all $k\in\{0,...,N-1\}$ and for almost every $s>0$ w.r.t. the Lebesgue measure on $\R$,
$$\widehat{r}_{k}(s)\rightarrow r_k(s)$$
when the quantization errors $\|\Theta_l-\widehat{\Theta}_l\|_p$ for $l\in \{k,k+1\}$ goes to zero.
\end{Proposition}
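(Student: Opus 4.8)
The plan is to proceed as in the proof of Proposition \ref{conv_qk}, bounding $|r_k(s)-\widehat r_k(s)|$ by the expectation of the absolute difference of the integrands appearing in \eqref{Def-qk-rk}, and then splitting the event on which this difference is non-zero into pieces that are controlled either by the quantization error $\|\Theta_l-\widehat\Theta_l\|_p$ (via Markov's inequality) or by the small-probability Assumption \ref{hyp-proba-U-apha}. Writing $g_s(z,t,z',t')=\1_{\{(t+u^*(z))\wedge t'>s\}}\1_U(z)\1_{U^c}(z')$, we have $|r_k(s)-\widehat r_k(s)|\le\E_\mu|g_s(\Theta_k,\Theta_{k+1})-g_s(\widehat\Theta_k,\widehat\Theta_{k+1})|$, and the difference of the two $g_s$ terms vanishes unless at least one of the three indicator factors differs. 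The factors $\1_U(z)$ and $\1_{U^c}(z')$ are handled exactly as in Proposition \ref{conv_qk}: their discrepancy forces $Z_k$ (resp. $Z_{k+1}$) to lie in $U^\alpha$ or the quantization error to exceed $\alpha$, which by Markov's inequality and Assumption \ref{hyp-proba-U-apha} contributes a term of the form $\alpha^{-p}\|Z_l-\widehat Z_l\|_p^p+C\alpha^\beta$, negligible as the quantization error tends to $0$.

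The new ingredient is the factor $\1_{\{(t+u^*(z))\wedge t'>s\}}$. Using Assumption \ref{hyp_u*_lip}.a (Lipschitz continuity of $u^*$), the map $(z,t)\mapsto (t+u^*(z))\wedge t'$ is Lipschitz, so $|(T_k+u^*(Z_k))\wedge T_{k+1}-(\widehat T_k+u^*(\widehat Z_k))\wedge\widehat T_{k+1}|\le (1+[u^*])\big(|Z_k-\widehat Z_k|+|T_k-\widehat T_k|+|T_{k+1}-\widehat T_{k+1}|\big)=:D_k$, a quantity whose $L_p$ norm is bounded by $(1+[u^*])$ times the sum of the quantization errors. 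Hence the indicator $\1_{\{(t+u^*(z))\wedge t'>s\}}$ differs between $(\Theta_k,\Theta_{k+1})$ and $(\widehat\Theta_k,\widehat\Theta_{k+1})$ only on the event that the continuous random variable $V_k:=(T_k+u^*(Z_k))\wedge T_{k+1}$ lies within $D_k$ of the level $s$; that is, on $\{|V_k-s|\le D_k\}$. Splitting once more, for any $\eta>0$ this event is contained in $\{D_k>\eta\}\cup\{|V_k-s|\le\eta\}$, the first of which has probability at most $\eta^{-p}\|D_k\|_p^p$ and the second of which is $\PP_\mu(s-\eta\le V_k\le s+\eta)$. Letting the quantization errors and then $\eta$ go to $0$, the bound tends to $\PP_\mu(V_k=s)$, which vanishes for all but countably many $s$ since $V_k$ has at most countably many atoms; this is precisely why the statement is only for almost every $s$ w.r.t. Lebesgue measure.

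The main obstacle is bookkeeping: one must combine the three sources of discrepancy simultaneously (a single realization may have several indicator factors differing at once), so the cleanest route is to bound $|g_s(\Theta_k,\Theta_{k+1})-g_s(\widehat\Theta_k,\widehat\Theta_{k+1})|$ by the sum of three terms, each measuring the discrepancy of one factor while the others are bounded by $1$, and then estimate each term separately as above. One also needs Remark \ref{rmq-U-convex} together with Assumption \ref{hyp-U-convex} to guarantee that $\widehat Z_k$ and $\widehat Z_{k+1}$ indeed take values in the relevant sets so that the projected quantities make sense. Finally, choosing $\alpha$ (as in Proposition \ref{conv_qk}) and $\eta$ as suitable powers of the quantization error makes every term vanish in the limit, yielding the claimed convergence $\widehat r_k(s)\to r_k(s)$ for a.e. $s$. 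Note that, unlike Proposition \ref{conv_qk}, no explicit rate is asserted here — the term $\PP_\mu(s-\eta\le V_k\le s+\eta)$ is controlled only qualitatively, via continuity of measure, which is exactly the reason the distribution scheme comes without a convergence rate whereas the moments scheme does.
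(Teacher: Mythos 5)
Your argument mirrors the paper's proof step for step: the same two-part decomposition (the time indicator handled by bounding the discrepancy via the Lipschitz constant $[u^*]$ and invoking non-atomicity of the law of $(T_k+u^*(Z_k))\wedge T_{k+1}$ at a.e.\ $s$; the spatial indicators handled exactly as in Proposition~\ref{conv_qk}), the same choice to let the quantization error and then the auxiliary cutoff tend to zero, and the same explanation of why no rate is obtained. One small overstatement: Assumption~\ref{hyp-U-convex} and Remark~\ref{rmq-U-convex} are neither among the proposition's hypotheses nor needed here, since the factor $\1_U(\widehat Z_k)$ in the definition of $\widehat r_k$ already kills the term whenever $\widehat Z_k\notin U$.
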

\begin{proof} Let $k\in\{0,...,N-1\}$ and $s>0$, equation \eqref{Def-qk-rk} yields
\begin{equation*}
|r_{k}(s)-\widehat{r}_{k}(s)|\leq A+B.
\end{equation*}
where
\begin{align*}
A&=\Big|\mathbf{E}_{\mu}\left[\Big(\mathbbm{1}_{\{(T_k+u^*(Z_k))\wedge T_{k+1}>s\}}-\mathbbm{1}_{\{(\widehat{T}_k+u^*(\widehat{Z}_k))\wedge \widehat{T}_{k+1}>s\}}\Big)\1_{U}(Z_k)\1_{U^c}(Z_{k+1})\right]\Big|,\\
B&=\Big|\mathbf{E}_{\mu}\left[\mathbbm{1}_{\{(\widehat{T}_k+u^*(\widehat{Z}_k))\wedge \widehat{T}_{k+1}>s\}}\Big(\1_{U}(Z_{k})\1_{U^c}(Z_{k+1})-\1_{U}(\widehat{Z}_k)\1_{U^c}(\widehat{Z}_{k+1})\Big)\right]\Big|.
\end{align*}
In the $A$ term, we crudely bound $\1_{U}(Z_k)$ and $\1_{U^c}(Z_{k+1})$ by $1$ and turn to the difference of the two indicator functions. This difference is non zero if and only if $(T_k+u^*(Z_k))\wedge T_{k+1}$ and $(\widehat{T}_k+u^*(\widehat{Z}_k))\wedge \widehat{T}_{k+1}$ are on either side of $s$ yielding that they both belong to $[s-\eta;s+\eta]$ where $\eta=\big|(T_k+u^*(Z_k))\wedge T_{k+1}-(\widehat{T}_k+u^*(\widehat{Z}_k))\wedge \widehat{T}_{k+1}\big|$, one has then:
$$\big|\mathbbm{1}_{\{(T_k+u^*(Z_k))\wedge T_{k+1}>s\}}-\mathbbm{1}_{\{(\widehat{T}_k+u^*(\widehat{Z}_k))\wedge \widehat{T}_{k+1}>s\}}\big|\leq \1_{\{|(T_k+u^*(Z_k))\wedge T_{k+1}-s|\leq \eta\}}$$
so that
$$A\leq \mathbf{P}_{\mu}\left(\big|(T_k+u^*(Z_k))\wedge T_{k+1}-s\big|\leq \eta\right).$$

The following discussion consists in noticing that either $\eta$ is small and so is the probability that $(T_k+u^*(Z_k))\wedge T_{k+1}$ belongs to the interval $[s-\eta;s+\eta]$, or $\eta$ is large but this happens with a small probability too when the quantization error goes to zero. For all $\alpha>0$, one has

\begin{align*}
A&\leq \mathbf{P}_{\mu}\left(\big|(T_k+u^*(Z_k))\wedge T_{k+1}-s\big|\leq \eta, \eta\leq\alpha\right)+\mathbf{P}_{\mu}\left(\eta>\alpha\right)\\
&\leq \mathbf{P}_{\mu}\left(\big|(T_k+u^*(Z_k))\wedge T_{k+1}-s\big|\leq \alpha\right)+\mathbf{P}_{\mu}\left(\eta>\alpha\right)\\
&\leq \big|\varphi_k(s+\alpha)-\varphi_k(s-\alpha)\big| + \frac{\|\eta\|_p^p}{\alpha^p}
\end{align*}
where $\varphi_k$ denotes the distribution function of $(T_k+u^*(Z_k))\wedge T_{k+1}$. Let $\epsilon>0$ and assume that $s$ is not an atom of this distribution so that there exists $\alpha_1>0$ such that $\big|\varphi_k(s+\alpha_1)-\varphi_k(s-\alpha_1)\big|\leq\epsilon$. Besides, thanks to Assumption \ref{hyp_u*_lip}.a stating the Lipschitz continuity of $u^{*}$, one has $\eta\leq\big|T_k-\widehat{T}_k\big|+[u^*]\big|Z_k-\widehat{Z}_k\big|+\big|T_{k+1}-\widehat{T}_{k+1}\big|$. Moreover, since the quantization error goes to $0$, one may assume that $\|\eta\|_p\leq\alpha_1\epsilon^{\frac{1}{p}}$. Setting $\alpha=\alpha_1$ in the previous computations yields

\begin{align*}
A&\leq \big|\varphi_k(s+\alpha_1)-\varphi_k(s-\alpha_1)\big| + \frac{\|\eta\|_p^p}{\alpha_1^p}\leq 2\epsilon.
\end{align*}
Notice that the set of the atoms of the distribution function of $(T_k+u^*(Z_k))\wedge T_{k+1}$ is at most countable so that the previous discussion is true for almost every $s>0$ w.r.t. the Lebesgue measure.
Let us now bound the $B$ term:
\begin{align*}
B&\leq \mathbf{E}_{\mu}\big|\1_{U}(Z_{k})\1_{U^c}(Z_{k+1})-\1_{U}(\widehat{Z}_k)\1_{U^c}(\widehat{Z}_{k+1})\big|\\
&\leq \E_{\mu}\left[\1_{U^c}(Z_{k+1})\big|1_{U}(Z_k)-1_{U}(\widehat{Z}_{k})\big|\right]+\E_{\mu}\left[1_{U}(\widehat{Z}_k)\big|1_{U^c}(Z_{k+1})-1_{U^c}(\widehat{Z}_{k+1})\big|\right]\\
&\leq \big|q_{k}-\widehat{q}_{k}\big|+\big|q_{k+1}-\widehat{q}_{k+1}\big|
\end{align*}
that goes to zero thanks to Proposition \ref{conv_qk}.\hfill
$\Box$\\
\end{proof}

The convergence of the approximation scheme of the distribution of the exit time is now a straightforward consequence of the following proposition.

\begin{Proposition}\label{lemme pi_{k} rho_{k}} We assume Assumptions \ref{hyp-tau-fini}, \ref{hyp-U-convex}, \ref{hyp-proba-U-apha} and \ref{hypNonRetourU} hold. Let $(\sigma_{k})_{k\leq N-1}$ and $(\widehat{\sigma}_{k})_{k\leq N-1}$ be two sequences of $[0,1]$-valued real numbers.
Let $(\pi_{k})_{0\leq k\leq N}$ and $(\widehat{\pi}_{k})_{0\leq k\leq N}$ defined as follows, $\pi_{0} =\widehat{\pi}_{0} =0$ and
$$
\pi_{k+1}=
\left\{\begin{array}{ll}
\frac{\pi_{k}q_{k}+\sigma_{k}}{q_{k+1}}, &\qquad\text{if $q_{k+1}\neq 0$}\\
0 & \qquad \text{otherwise.}
\end{array}\right.$$
$$
\widehat{\pi}_{k+1} =
\left\{\begin{array}{ll}
\frac{\widehat{\pi}_{k}\widehat{q}_{k}+\widehat{\sigma}_{k}}{\widehat{q}_{k+1}}, &\qquad\text{if $\widehat{q}_{k+1}\neq 0$}\\
0 & \qquad \text{otherwise.}
\end{array}\right.$$

\noindent For $0\leq k\leq N$, if the quantization error is such that for all $l\leq k$
$$C^{\frac{p}{p+\beta}}\left(\left(\frac{\beta}{p}\right)^{\frac{p}{p+\beta}}+\left(\frac{p}{\beta}\right)^{\frac{\beta}{p+\beta}}\right) \|Z_{l}-\widehat{Z}_{l}\|_{p}^{\frac{p\beta}{p+\beta}}\leq \frac{1}{2}\tilde{q},$$
then
\begin{eqnarray*}
|\pi_{k}-\widehat{\pi}_{k}|&\leq& \frac{2}{\tilde{q}}\Big(\pi^{sup}|q_{k-1}-\widehat{q}_{k-1}|+|\pi_{k-1}-\widehat{\pi}_{k-1}|+|\sigma_{k-1}-\widehat{\sigma}_{k-1}|\Big)\\
&&+\frac{2(\pi^{sup}+1)}{\tilde{q}^2}|q_{k}-\widehat{q}_{k}|
\end{eqnarray*}
where $\pi^{sup}=\max_{0\leq k\leq N}\pi_{k}$.
\end{Proposition}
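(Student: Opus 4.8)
The plan is to proceed by induction on $k$. The case $k=0$ is trivial since $\pi_0=\widehat\pi_0=0$ by definition, so the bound holds (the right-hand side being nonnegative). For the inductive step, fix $k\in\{1,\dots,N\}$ and assume the quantization error satisfies the stated smallness condition for all $l\le k$. The first key observation, via Proposition \ref{conv_qk}, is that this smallness condition translates exactly into $|q_l-\widehat q_l|\le\frac12\tilde q$ for all $l\le k$; consequently, for $l\ge\widetilde k$ we have $\widehat q_l\ge q_l-\frac12\tilde q\ge\tilde q-\frac12\tilde q=\frac12\tilde q>0$, so the ``if'' branch of the recursion is active for both $\pi_l$ and $\widehat\pi_l$ at those indices, and $\pi_l=\widehat\pi_l=0$ for $l<\widetilde k$. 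This is the mechanism by which the non-Lipschitz recursion (which branches on whether a denominator vanishes) is tamed: the quantization error being small forces $\widehat q_{k}$ and $q_{k}$ to vanish or not vanish simultaneously, so we never have to compare the two branches against each other.

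Once both recursions are in their ``if'' branch at step $k$, the difference $\pi_k-\widehat\pi_k$ can be written as
$$
\pi_k-\widehat\pi_k=\frac{\pi_{k-1}q_{k-1}+\sigma_{k-1}}{q_k}-\frac{\widehat\pi_{k-1}\widehat q_{k-1}+\widehat\sigma_{k-1}}{\widehat q_k}.
$$
I would put this over a common denominator, or more cleanly, write it as
$$
\pi_k-\widehat\pi_k=\frac{1}{q_k}\Big[(\pi_{k-1}q_{k-1}-\widehat\pi_{k-1}\widehat q_{k-1})+(\sigma_{k-1}-\widehat\sigma_{k-1})\Big]+(\widehat\pi_{k-1}\widehat q_{k-1}+\widehat\sigma_{k-1})\Big(\frac{1}{q_k}-\frac{1}{\widehat q_k}\Big),
$$
and then bound each piece. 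For the numerator term $\pi_{k-1}q_{k-1}-\widehat\pi_{k-1}\widehat q_{k-1}$, the standard add-and-subtract trick $=\pi_{k-1}(q_{k-1}-\widehat q_{k-1})+\widehat q_{k-1}(\pi_{k-1}-\widehat\pi_{k-1})$ gives a bound $\pi^{sup}|q_{k-1}-\widehat q_{k-1}|+|\pi_{k-1}-\widehat\pi_{k-1}|$, using $0\le\widehat q_{k-1}\le 1$. For the last term, $\widehat\pi_{k-1}\widehat q_{k-1}+\widehat\sigma_{k-1}=\widehat\pi_k\widehat q_k\le\pi^{sup}+1$ if we note $\widehat\pi_k\le$ some bound, or more directly $\le\pi^{sup}+1$ after checking $\widehat\pi_k\le 1$-type control; and $|1/q_k-1/\widehat q_k|=|q_k-\widehat q_k|/(q_k\widehat q_k)\le 2|q_k-\widehat q_k|/\tilde q^2$ using $q_k\ge\tilde q$ and $\widehat q_k\ge\frac12\tilde q$. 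Finally $1/q_k\le 1/\tilde q\le 2/\tilde q$. Collecting these yields exactly the claimed inequality.

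The main obstacle I anticipate is the bookkeeping around the cases $k<\widetilde k$ versus $k\ge\widetilde k$, and in particular justifying that $\pi^{sup}+1$ is a legitimate bound for $\widehat\pi_{k-1}\widehat q_{k-1}+\widehat\sigma_{k-1}$; one needs a uniform a priori bound on the quantized sequence $\widehat\pi_k$, which should follow from an easy separate induction showing $\widehat\pi_k\le 1$ (since in the applications $\sigma_k,\widehat\sigma_k$ are probabilities of disjoint-ish events summing to at most $q_k$, $\widehat q_k$ respectively), or alternatively the slightly weaker but sufficient bound $\widehat\pi_{k-1}\widehat q_{k-1}+\widehat\sigma_{k-1}=\widehat\pi_k\widehat q_k$ together with $\widehat q_k\le 1$ and a bound on $\widehat\pi_k$. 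Everything else is routine manipulation of fractions; the genuinely important point, already emphasized in the text, is that the smallness hypothesis on the quantization error is precisely calibrated (through Proposition \ref{conv_qk}) to keep $\widehat q_k$ bounded below by $\frac12\tilde q$, which is what makes the otherwise non-Lipschitz recursion stable.
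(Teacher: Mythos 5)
Your plan is correct in its broad strokes — induction on $k$, using Assumption \ref{hyp-U-convex} to get $\widehat{q}_k=0$ for $k<\tilde k$, using Proposition \ref{conv_qk} and the smallness hypothesis to force $\widehat{q}_k\ge\tfrac12\tilde q$ for $k\ge\tilde k$, and a telescoping decomposition of the two fractions. This matches the paper's argument almost exactly. However, your specific choice of decomposition introduces a gap that the paper avoids.

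You write
\begin{equation*}
\pi_k-\widehat\pi_k=\frac{1}{q_k}\Bigl[(\pi_{k-1}q_{k-1}-\widehat\pi_{k-1}\widehat q_{k-1})+(\sigma_{k-1}-\widehat\sigma_{k-1})\Bigr]+\bigl(\widehat\pi_{k-1}\widehat q_{k-1}+\widehat\sigma_{k-1}\bigr)\Bigl(\frac{1}{q_k}-\frac{1}{\widehat q_k}\Bigr),
\end{equation*}
which places the \emph{hatted} numerator $\widehat\pi_{k-1}\widehat q_{k-1}+\widehat\sigma_{k-1}$ in front of the denominator difference. To reach the stated bound you must control this by $\pi^{sup}+1$, and you acknowledge this requires a uniform bound on $\widehat\pi_k$. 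But no such bound follows from the hypotheses of the Proposition: the $\sigma_k,\widehat\sigma_k$ are merely assumed to lie in $[0,1]$, and one can easily have $\widehat\pi_k>\pi^{sup}+1$ (e.g. $\sigma_0=0$, $q_1=1$ so $\pi_1=0$, yet $\widehat\sigma_0=1$, $\widehat q_1=1/2$ gives $\widehat\pi_1=2$). The paper even remarks, in the corrosion example, that $\widehat p_N(s)$ need not be $\le 1$, so your proposed ``$\widehat\pi_k\le 1$-type control'' fails already in the intended applications. The fix is simply to use the mirror decomposition: put the \emph{un-hatted} numerator $\pi_{k-1}q_{k-1}+\sigma_{k-1}$ in front of the denominator difference and $\tfrac1{\widehat q_k}$ in front of the numerator differences. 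Then $|\pi_{k-1}q_{k-1}+\sigma_{k-1}|\le\pi^{sup}\cdot 1+1$ follows directly from the definition of $\pi^{sup}$ and the $[0,1]$-bounds on $q_{k-1},\sigma_{k-1}$, the factor $\tfrac1{\widehat q_k}\le\tfrac2{\tilde q}$ is available, and $\tfrac{|q_k-\widehat q_k|}{q_k\widehat q_k}\le\tfrac{2}{\tilde q^2}|q_k-\widehat q_k|$ as you already have. This is exactly what the paper does; the asymmetry in which side goes over which denominator is the small but essential point.
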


\begin{proof} The difficulty of this proof lies in the fact that the recursive function giving $\pi_{k+1}$ from $\pi_k$, $q_k$, $q_{k+1}$ and $\sigma_k$ is not Lipschitz continuous because of the division by $q_{k+1}$. To overcome this drawback, we will use the strictly positive lower bound for $q_{k}$ described earlier. Indeed, recall from Definition \ref{q-tilde} that there exists a step $\tilde{k}$ such that $q_{k}\geq \tilde{q}>0$ for all $k\geq \tilde{k}$ and $q_{k}=0$ for all $k<\tilde{k}$. What is more, a similar bound will be derived for the quantized values $\widehat{q}_{k}$ thanks to the convergence of $\widehat{q}_{k}$ towards $q_{k}$.\\

We now prove by induction that $\widehat{\pi}_k$ converges towards $\pi_k$. First, one has $\widehat{\pi}_0=\pi_0=0$. Then, let $k\in \{1,...,N\}$.\\

If $k<\tilde{k}$, then $q_{k}=0$ and Assumption \ref{hyp-U-convex} yields that $\widehat{q}_k=0$ too. Indeed, $q_k=0$ means that $Z_{k}\in U$ a.s. Since $U$ is a convex set, Remark \ref{rmq-U-convex} implies that $\widehat{Z}_{k}\in U$ a.s. too. In other words, $\widehat{q}_k=0$. Finally, from the definitions, one has $\pi_{k}=\widehat{\pi}_{k}=0$.

If $k\geq\tilde{k}$, then $q_{k}\geq \tilde{q}>0$. In order to bound the error between $\pi_{k}$ and $\widehat{\pi}_{k}$, it is indeed necessary to have a strictly positive lower bound for $q_{k}$ because of the division by $q_{k}$ within the recursion. Now we need to obtain the same kind of bound for $\widehat{q}_{k}$. This can be achieved thanks to Proposition \ref{conv_qk} giving the convergence of $\widehat{q}_{k}$ towards $q_{k}$. Indeed, assume from now on that the number of points in the quantization grids is large enough such that the quantization error is sufficiently small to ensure that for all $j= \tilde{k},...,N$, $|q_j-\widehat{q}_j|\leq \frac{1}{2}\tilde{q}$. Hence, the required lower bound is $\widehat{q}_{k}\geq \frac{1}{2}\tilde{q}>0$. Therefore,
\begin{align*}
|\pi_{k}-\widehat{\pi}_{k}|\leq& \left|\frac{\pi_{k-1}q_{k-1}+\sigma_{k-1}}{q_{k}}-\frac{\widehat{\pi}_{k-1}\widehat{q}_{k-1}+\widehat{\sigma}_{k-1}}{\widehat{q}_{k}}\right|\\
\leq& \frac{\pi_{k-1}}{\widehat{q}_{k}}|q_{k-1}-\widehat{q}_{k-1}|+\frac{\widehat{q}_{k-1}}{\widehat{q}_{k}}|\pi_{k-1}-\widehat{\pi}_{k-1}|+\frac{1}{\widehat{q}_{k}}|\sigma_{k-1}-\widehat{\sigma}_{k-1}|\\
&+|\pi_{k-1}q_{k-1}+\sigma_{k-1}|\frac{|q_{k}-\widehat{q}_{k}|}{q_{k}\widehat{q}_{k}}\\
\leq& \frac{\pi^{sup}}{\widehat{q}_{k}}|q_{k-1}-\widehat{q}_{k-1}|+\frac{1}{\widehat{q}_{k}}|\pi_{k-1}-\widehat{\pi}_{k-1}|+\frac{1}{\widehat{q}_{k}}|\sigma_{k-1}-\widehat{\sigma}_{k-1}|\\
&+(\pi^{sup}+1)\frac{|q_{k}-\widehat{q}_{k}|}{q_{k}\widehat{q}_{k}}\\
\leq& \frac{2}{\tilde{q}}\left(\pi^{sup}|q_{k-1}-\widehat{q}_{k-1}|+|\pi_{k-1}-\widehat{\pi}_{k-1}|+|\sigma_{k-1}-\widehat{\sigma}_{k-1}|\right)\\
&+\frac{2(\pi^{sup}+1)}{\tilde{q}^2}|q_{k}-\widehat{q}_{k}|
\end{align*}
where $\pi^{sup}=\max_{0\leq k\leq N}\pi_{k}$.\hfill
$\Box$
\end{proof}

\begin{Remark}\label{rq-conv-rate-pi_{k}}
Notice that a bound for the rate of convergence of $\widehat{\pi}_{k}$ towards $\pi_{k}$ may be obtained as soon as a bound for the rate of convergence of $\widehat{\sigma}_{k}$ towards $\sigma_{k}$ and an upper bound for the sequence $(\pi_{k})_{0\leq k\leq N}$ are available.
\end{Remark}

Eventually, let us state one of our main results, namely the convergence of the approximation scheme of the distribution of the exit time:
\begin{Theorem}\label{Theo_conv_pk}Under Assumptions \ref{hyp-tau-fini}, \ref{hyp_u*_lip}.a, \ref{hyp-U-convex}, \ref{hyp-proba-U-apha} and \ref{hypNonRetourU}, for all $k\in\{0,...,N\}$ and for almost every $s>0$ w.r.t. the Lebesgue measure on $\R$,
$$\widehat{p}_{k}(s) \rightarrow p_{k}(s)$$
when the quantization errors $\|\Theta_j-\widehat{\Theta}_j\|_p$ for $j\in \{0,...,k\}$ go to zero.
\end{Theorem}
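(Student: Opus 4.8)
The plan is to deduce Theorem \ref{Theo_conv_pk} by assembling the three preceding propositions: the quantitative convergence $\widehat{q}_k\to q_k$ of Proposition \ref{conv_qk}, the convergence $\widehat{r}_k(s)\to r_k(s)$ for almost every $s$ of Proposition \ref{conv_rk}, and the stability estimate for the recursion of Proposition \ref{lemme pi_{k} rho_{k}}. First I would observe that the pairs of sequences $(p_k(s))_{k\leq N}$, $(\widehat{p}_k(s))_{k\leq N}$ are precisely the sequences $(\pi_k)$, $(\widehat{\pi}_k)$ of Proposition \ref{lemme pi_{k} rho_{k}} for the choice $\sigma_k=r_k(s)$ and $\widehat{\sigma}_k=\widehat{r}_k(s)$: the recursions of Proposition \ref{rec_pk} and Definition \ref{def-rec-hat-pk} coincide with those of Proposition \ref{lemme pi_{k} rho_{k}}, and the initializations match since $p_0(s)=\widehat{p}_0(s)=0$. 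Moreover $r_k(s)=\PP_{\mu}(\{\tau>s\}\cap\{T_k<\tau\leq T_{k+1}\})\in[0,1]$, and $\widehat{r}_k(s)$, being an expectation of a product of indicator functions, also lies in $[0,1]$, so the standing hypothesis of Proposition \ref{lemme pi_{k} rho_{k}} is satisfied; also $p_k(s)\in[0,1]$ for every $k$, hence $\pi^{sup}=\max_{0\leq k\leq N}p_k(s)\leq 1$.

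Next I would fix $s>0$ outside the exceptional set of Proposition \ref{conv_rk}, i.e. $s$ is not an atom of the law of $(T_l+u^*(Z_l))\wedge T_{l+1}$ for any $l\in\{0,\dots,N-1\}$. Each such set of atoms is at most countable and there are only finitely many indices $l$, so the excluded set of values of $s$ is still Lebesgue-null. For such an $s$, Proposition \ref{conv_rk} gives $\widehat{r}_l(s)\to r_l(s)$ for all $l\leq N-1$ and Proposition \ref{conv_qk} gives $\widehat{q}_l\to q_l$ for all $l\leq N$, both as the quantization errors tend to zero.

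The conclusion then follows by induction on $k$. The case $k=0$ is immediate since $p_0(s)=\widehat{p}_0(s)=0$. For the inductive step, assume $\widehat{p}_{k-1}(s)\to p_{k-1}(s)$ and choose the quantization grids fine enough that, for all $l\leq k$, $\|Z_l-\widehat{Z}_l\|_p$ is small enough to satisfy the hypothesis $C^{\frac{p}{p+\beta}}\big((\beta/p)^{\frac{p}{p+\beta}}+(p/\beta)^{\frac{\beta}{p+\beta}}\big)\|Z_l-\widehat{Z}_l\|_p^{\frac{p\beta}{p+\beta}}\leq\frac12\tilde{q}$ of Proposition \ref{lemme pi_{k} rho_{k}}; this is possible because the left-hand sides tend to $0$. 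Proposition \ref{lemme pi_{k} rho_{k}} together with $\pi^{sup}\leq 1$ then gives
\begin{align*}
|p_k(s)-\widehat{p}_k(s)|\leq{}&\frac{2}{\tilde{q}}\Big(|q_{k-1}-\widehat{q}_{k-1}|+|p_{k-1}(s)-\widehat{p}_{k-1}(s)|+|r_{k-1}(s)-\widehat{r}_{k-1}(s)|\Big)\\
&+\frac{4}{\tilde{q}^2}|q_k-\widehat{q}_k|.
\end{align*}
By the inductive hypothesis and Propositions \ref{conv_qk} and \ref{conv_rk}, each term on the right-hand side tends to $0$ as the quantization errors $\|\Theta_j-\widehat{\Theta}_j\|_p$, $j\in\{0,\dots,k\}$, go to zero, and $\tilde{q}>0$ is a fixed constant; hence $|p_k(s)-\widehat{p}_k(s)|\to 0$, which closes the induction.

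Since all the analytic difficulties have already been absorbed into Propositions \ref{conv_qk}, \ref{conv_rk} and \ref{lemme pi_{k} rho_{k}}, there is no serious obstacle remaining; the only points needing care are the bookkeeping of the null exceptional set of $s$ (finiteness of the index range keeps the union null) and the verification that $\pi^{sup}\leq 1$, which keeps the constants in the stability estimate uniform in the quantization error. For completeness one may also note the degenerate situation $q_N=0$ (when $\tilde{k}>N$): then $Z_k\in U$ a.s. for all $k\leq N$, so $\widehat{Z}_k\in U$ a.s. by Remark \ref{rmq-U-convex} and Assumption \ref{hyp-U-convex}, whence $p_k(s)=\widehat{p}_k(s)=0$ and the statement is trivially true.
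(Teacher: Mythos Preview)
Your proof is correct and follows essentially the same approach as the paper: apply Proposition~\ref{lemme pi_{k} rho_{k}} with $\sigma_k=r_k(s)$, $\widehat{\sigma}_k=\widehat{r}_k(s)$, use Propositions~\ref{conv_qk} and~\ref{conv_rk} for the convergence of the ingredients, and note that $p_k(s)\leq 1$ so that $\pi^{sup}\leq 1$. You are just more explicit than the paper about the induction, the Lebesgue-null exceptional set of $s$, and the degenerate case $\tilde{k}>N$.
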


\begin{proof} Let $s>0$ such that $(\widehat{r}_{k}(s))_{k}$ converges towards $(r_{k}(s))_{k}$ and apply Proposition \ref{lemme pi_{k} rho_{k}} with $(\sigma_{k})_{k}=(r_{k}(s))_{k}$ and $(\widehat{\sigma}_{k})_{k}=(\widehat{r}_{k}(s))_{k}$ so that $(\pi_{k})_{k}=(p_{k}(s))_{k}$ and $(\widehat{\pi}_{k})_{k}=(\widehat{p}_{k}(s))_{k}$. Finally, notice that $(p_{k}(s))_{k}$ is bounded by 1.\hfill
$\Box$\\
\end{proof}

\begin{Remark}\label{rq-u*-bornŽ-useless}It may be useful to notice that, although it will be crucial in the moments approximation scheme, the boundedness  condition on $u^{*}$ (Assumption \ref{hyp_u*_lip}.b) was unnecessary in this section. Hence, the distribution approximation can be achieved without this hypothesis.
\end{Remark}

Eventually, we obtain an easily computable approximation of the survival function of the exit time. Let us now consider its moments. Of course they may be derived from the distribution but we present in the following section a method to approximate them directly. An important advantage of this method will be to provide a bound for the rate of convergence.

\subsection{Approximation scheme of the moments and rate of convergence} \label{DefRecQuantif}\label{section-update}

Similarly to the distribution, the moments may be approximated thanks to the quantization of the process $(\Theta_{k})_{k\leq N}=(Z_{k},T_{k})_{k\leq N}$. However, it is important to stress the fact that we will be able to derive a rate of convergence for our approximation scheme. One may notice from Proposition \ref{rec_pkj} that, similarly to the case of the distribution, $p_{N,j}=\E_\mu[\tau^{j}|\tau\leq T_N]$ may be computed as soon as the sequences $(q_k)_{k\leq N}$ and $(r_{k,j})_{k\leq N-1}$ are known. The first one has already been approximated in the previous section but we still need to find an expression of the second one depending on the Markov chain $(Z_k,T_k)_k$ to define its quantized approximation $(\widehat{r}_{k,j})_{k\leq N-1}$. Thanks to Assumption \ref{hypNonRetourU}, the same arguments give

\begin{equation}\label{Def-rkj}
r_{k,j}=\E_{\mu}\left[\Big((T_k+u^*(Z_k))\wedge T_{k+1}\Big)^{j}\1_{U}(Z_k)\1_{U^c}(Z_{k+1})\right].
\end{equation}

So that we may now naturally define the quantized approximation of the sequences $(r_{k,j})_{k\leq N-1}$ and $(p_{k,j})_{k\leq N}$.

\begin{Definition}
For all $j\in \N$, define the sequence $(\widehat{r}_{k,j})_{k\in\{0,...,N-1\}}$:
\begin{equation*}
\widehat{r}_{k,j}=\E_{\mu}\left[\Big((\widehat{T}_k+u^*(\widehat{Z}_k))\wedge \widehat{T}_{k+1}\Big)^{j}\1_{U}(\widehat{Z}_k)\1_{U^c}(\widehat{Z}_{k+1})\right].
\end{equation*}
and the sequence $(\widehat{p}_{k,j})_{k\in\{0,...,N\}}$ by $\widehat{p}_{0,j}=0$ and
\begin{equation*}
\widehat{p}_{k+1,j} =
\left\{\begin{array}{ll}
\frac{\widehat{p}_{k,j}\widehat{q}_{k}+\widehat{r}_{k,j}}{\widehat{q}_{k+1}}, &\qquad\text{if $\widehat{q}_{k+1}\neq 0$}\\
0 & \qquad \text{otherwise.}
\end{array}\right.
\end{equation*}
\end{Definition}
As for $\widehat{q}_{k}$ and $\widehat{r}_{k}(s)$ defined in the previous section, $\widehat{r}_{k,j}$ may be computed easily from the quantization algorithm. Indeed, one has :

$$\widehat{r}_{k,j}=\sum_{\begin{footnotesize}\begin{array}{c}\theta=(z,t)\in\Gamma_k\\ z\in U\end{array}\end{footnotesize}}\sum_{\begin{footnotesize}\begin{array}{c}\theta'=(z',t')\in\Gamma_{k+1}\\z'\not\in U\end{array}\end{footnotesize}}  \Big((t+u^*(z))\wedge t'\Big)^{j}\mathbf{P}(\widehat{\Theta}_k=\theta)\widehat{Q}_k(\theta;\theta').$$

The following proposition proves the convergence of $\widehat{r}_{k,j}$ towards $r_{k,j}$.

\begin{Proposition}\label{conv_rkj}Under Assumptions, \ref{hyp_u*_lip}.a, \ref{hyp-proba-U-apha}, \ref{hypNonRetourU} and \ref{hyp_t*_bnd}, for all $k\in\{0,...,N-1\}$ and for all $j\in\N$, $\widehat{r}_{k,j}$ converges towards $r_{k,j}$
when the quantization errors $\|\Theta_l-\widehat{\Theta}_l\|_p$ for $l\in\{k,k+1\}$ go to zero. More precisely, the error is bounded by
\begin{align*}
|r_{k,j} - \widehat{r}_{k,j}|\leq& \text{  } j\big((k+1)C_{t^{*}}\big)^{j-1} \big(\|T_k-\widehat{T}_k\|_{p}+[u^*]\|Z_k-\widehat{Z}_k\|_{p}+\|T_{k+1}-\widehat{T}_{k+1}\|_{p}\big)\\
&+\big((k+1)C_{t^{*}}\big)^{j} \big(|q_{k}-\widehat{q}_{k}|+|q_{k+1}-\widehat{q}_{k+1}|\big).
\end{align*}
\end{Proposition}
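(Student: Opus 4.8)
The plan is to bound $|r_{k,j}-\widehat r_{k,j}|$ by splitting the difference into two terms in the spirit of the proof of Proposition \ref{conv_rk}: one controlling the discrepancy between the powers of the exit-time variables, and one controlling the discrepancy between the products of indicator functions. Writing $R=(T_k+u^*(Z_k))\wedge T_{k+1}$ and $\widehat R=(\widehat T_k+u^*(\widehat Z_k))\wedge \widehat T_{k+1}$, equation \eqref{Def-rkj} and the definition of $\widehat r_{k,j}$ give
\begin{equation*}
|r_{k,j}-\widehat r_{k,j}|\leq \mathbf{E}_\mu\big[|R^j-\widehat R^j|\1_U(Z_k)\1_{U^c}(Z_{k+1})\big]+\mathbf{E}_\mu\big[\widehat R^j\,\big|\1_U(Z_k)\1_{U^c}(Z_{k+1})-\1_U(\widehat Z_k)\1_{U^c}(\widehat Z_{k+1})\big|\big].
\end{equation*}
For the first term I bound the indicators by $1$ and use the elementary inequality $|a^j-b^j|\leq j(a\vee b)^{j-1}|a-b|$, valid for $a,b\geq 0$. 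Here the crucial point is that both $R$ and $\widehat R$ are bounded by $(k+1)C_{t^*}$: indeed $T_k=\sum_{i=1}^k S_i$ with each inter-jump time $S_i\leq t^*(Z_{i-1})\leq C_{t^*}$ by Assumption \ref{hyp_t*_bnd}, so $T_{k+1}\leq (k+1)C_{t^*}$, and $R\leq T_{k+1}$; the quantized process stays within the convex hull of the support of $(\Theta_k)$ (Remark \ref{rmq-U-convex}), hence $\widehat T_k$ enjoys the same bound, and $\widehat R\leq \widehat T_{k+1}$. Thus the first term is at most $j\big((k+1)C_{t^*}\big)^{j-1}\mathbf{E}_\mu|R-\widehat R|$, and, exactly as in Proposition \ref{conv_rk}, the Lipschitz continuity of $u^*$ (Assumption \ref{hyp_u*_lip}.a) together with the contraction property of $x\mapsto a\wedge x$ gives $|R-\widehat R|\leq |T_k-\widehat T_k|+[u^*]|Z_k-\widehat Z_k|+|T_{k+1}-\widehat T_{k+1}|$; taking expectations and using $\mathbf{E}_\mu|\cdot|\leq\|\cdot\|_p$ produces the first line of the claimed bound.

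For the second term I bound $\widehat R^j\leq \big((k+1)C_{t^*}\big)^j$ and then reproduce verbatim the telescoping estimate from the end of the proof of Proposition \ref{conv_rk}:
\begin{equation*}
\mathbf{E}_\mu\big|\1_U(Z_k)\1_{U^c}(Z_{k+1})-\1_U(\widehat Z_k)\1_{U^c}(\widehat Z_{k+1})\big|\leq \mathbf{E}_\mu\big|\1_U(Z_k)-\1_U(\widehat Z_k)\big|+\mathbf{E}_\mu\big|\1_{U^c}(Z_{k+1})-\1_{U^c}(\widehat Z_{k+1})\big|,
\end{equation*}
and by \eqref{Def-qk-rk} the two right-hand side terms are $|q_k-\widehat q_k|$ and $|q_{k+1}-\widehat q_{k+1}|$. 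This yields the second line, $\big((k+1)C_{t^*}\big)^j\big(|q_k-\widehat q_k|+|q_{k+1}-\widehat q_{k+1}|\big)$, and adding the two contributions gives the stated inequality. Convergence to $0$ then follows since the quantization errors $\|\Theta_l-\widehat\Theta_l\|_p$ (hence also $\|Z_l-\widehat Z_l\|_p$ and $\|T_l-\widehat T_l\|_p$) tend to $0$ and, by Proposition \ref{conv_qk}, so do $|q_k-\widehat q_k|$ and $|q_{k+1}-\widehat q_{k+1}|$.

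The main obstacle — and the reason Assumption \ref{hyp_t*_bnd} is invoked here whereas it was not needed for the distribution — is securing the uniform bound $(k+1)C_{t^*}$ on both $R$ and $\widehat R$: without it the inequality $|a^j-b^j|\leq j(a\vee b)^{j-1}|a-b|$ cannot be turned into an $L^p$-type estimate with a deterministic constant, and more delicately, one must be careful that the quantized variable $\widehat T_{k+1}$ really inherits the bound. This is exactly where Remark \ref{rmq-U-convex} (the Bouton–Pagès property that the quantized chain lives in the convex hull of the support of the original chain) is used: since $T_{k+1}\leq(k+1)C_{t^*}$ almost surely, the support of the law of $\Theta_{k+1}$ lies in $E\times[0,(k+1)C_{t^*}]$, a convex set, so $\widehat T_{k+1}$ is bounded by $(k+1)C_{t^*}$ as well. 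Everything else is a routine repetition of the arguments already used for $r_k(s)$.
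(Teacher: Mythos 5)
Your proof is correct and follows essentially the same route as the paper's: the same $A+B$ decomposition, the same use of the Lipschitz constant $jM^{j-1}$ of $x\mapsto x^j$ on $[0,M]$ with $M=(k+1)C_{t^*}$, the same appeal to Assumption \ref{hyp_t*_bnd} and Remark \ref{rmq-U-convex} to bound $T_{k+1}$ and $\widehat T_{k+1}$, and the same telescoping estimate for the difference of indicator products feeding into Proposition \ref{conv_qk}.
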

\begin{proof} Let $k\in\{0,...,N-1\}$ and $j\in\N$, one has :
\begin{equation*}
|r_{k,j}-\widehat{r}_{k,j}|\leq A+B.
\end{equation*}
where
\begin{align*}
A&=\Big|\mathbf{E}_{\mu}\left[\Big(\big((T_k+u^*(Z_k))\wedge T_{k+1}\big)^{j}-\big((\widehat{T}_k+u^*(\widehat{Z}_k))\wedge \widehat{T}_{k+1}\big)^{j}\Big)\1_{U}(Z_k)\1_{U^c}(Z_{k+1})\right]\Big|,\\
B&=\Big|\mathbf{E}_{\mu}\left[\Big((\widehat{T}_k+u^*(\widehat{Z}_k))\wedge \widehat{T}_{k+1}\Big)^{j}\Big(\1_{U}(Z_{k})\1_{U^c}(Z_{k+1})-\1_{U}(\widehat{Z}_k)\1_{U^c}(\widehat{Z}_{k+1})\Big)\right]\Big|.
\end{align*}
Assumption \ref{hyp_t*_bnd} yields that the inter jump times $S_{i}$ are a.s. bounded by $C_{t^{*}}$ so that $T_{i}\leq i C_{t^{*}}$ a.s. and $(T_i+u^*(Z_i))\wedge T_{i+1}\leq (i+1)C_{t^{*}}$ a.s.. By using Remark \ref{rmq-U-convex}, these bounds are equally true for the quantized process $\widehat{T}_{i}\leq i C_{t^{*}}$ and $(\widehat{T}_i+u^*(\widehat{Z}_i))\wedge \widehat{T}_{i+1}\leq \widehat{T}_{i+1}\leq (i+1)C_{t^{*}}$ a.s.\\
Let us first consider the term $A$, we crudely bound the indicator functions by 1. Moreover, denote $\eta=\big|(T_k+u^*(Z_k))\wedge T_{k+1}-(\widehat{T}_k+u^*(\widehat{Z}_k))\wedge \widehat{T}_{k+1}\big|$ and notice that the function $x\rightarrow x^{j}$ is Lipschitz continuous on any set $[0,M]$ with Lipschitz constant~$jM^{j-1}$.
\begin{align*}
A&\leq \mathbf{E}_{\mu}\left[j\big((k+1)C_{t^{*}}\big)^{j-1}\eta\right]\\
&\leq j\big((k+1)C_{t^{*}}\big)^{j-1} \|\eta\|_{p}
\end{align*}
and thanks to Assumption \ref{hyp_u*_lip}.a stating the Lipschitz continuity of $u^{*}$, one has
$$A\leq j\big((k+1)C_{t^{*}}\big)^{j-1}\Big(\|T_k-\widehat{T}_k\|_{p}+[u^*]\|Z_k-\widehat{Z}_k\|_{p}+\|T_{k+1}-\widehat{T}_{k+1}\|_{p}\Big).$$
Moreover, the term $B$ is bounded by:
\begin{align*}
B&\leq \big((k+1)C_{t^{*}}\big)^{j}\mathbf{E}_{\mu}\left|\1_{U}(Z_{k})\1_{U^c}(Z_{k+1})-\1_{U}(\widehat{Z}_k)\1_{U^c}(\widehat{Z}_{k+1})\right|\\
&\leq \big((k+1)C_{t^{*}}\big)^{j} \big(|q_{k}-\widehat{q}_{k}|+|q_{k+1}-\widehat{q}_{k+1}|\big).
\end{align*}
We conclude thanks to Proposition \ref{conv_qk}.\hfill
$\Box$\\
\end{proof}

We may now state the other important results of our paper namely the convergence of the approximation scheme of the moments of the exit time with a bound for the rate of convergence.
\begin{Theorem}\label{Theo_conv_pkj}Under Assumptions \ref{hyp-tau-fini}, \ref{hyp_u*_lip}.a, \ref{hyp-U-convex}, \ref{hyp-proba-U-apha}, \ref{hypNonRetourU} and \ref{hyp_t*_bnd}, for all $k\in\{0,...,N\}$ and for all $j\in\N$, $\widehat{p}_{k,j}$ converges towards $p_{k,j}$
when the quantization errors $\|\Theta_j-\widehat{\Theta}_j\|_p$ for $j\in \{0,...,k\}$ go to zero. \\
More precisely, if the quantization error is such that for all $l\leq k$
$$C^{\frac{p}{p+q}}\left(\left(\frac{q}{p}\right)^{\frac{p}{p+q}}+\left(\frac{p}{q}\right)^{\frac{q}{p+q}}\right) \|Z_{l}-\widehat{Z}_{l}\|_{p}^{\frac{pq}{p+q}}\leq \frac{1}{2}\tilde{q},$$
then
\begin{eqnarray*}
|p_{k,j}-\widehat{p}_{k,j}|&\leq& \frac{2}{\tilde{q}}\left((NC_{t^{*}})^{j}|q_{k-1}-\widehat{q}_{k-1}|+|p_{k-1,j}-\widehat{p}_{k-1,j}|+|r_{k-1,j}-\widehat{r}_{k-1,j}|\right)\\
&&+\frac{2((NC_{t^{*}})^{j}+1)}{\tilde{q}^2}|q_{k}-\widehat{q}_{k}|.
\end{eqnarray*}
\end{Theorem}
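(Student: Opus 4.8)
The plan is to apply Proposition \ref{lemme pi_{k} rho_{k}} with the same mechanism used in the proof of Theorem \ref{Theo_conv_pk}, simply substituting the moment sequences for the survival-function sequences. Concretely, I would set $(\sigma_k)_k = (r_{k,j})_k$ and $(\widehat\sigma_k)_k = (\widehat r_{k,j})_k$, so that the recursions of Proposition \ref{rec_pkj} and of the definition of $(\widehat p_{k,j})_k$ make $(\pi_k)_k = (p_{k,j})_k$ and $(\widehat\pi_k)_k = (\widehat p_{k,j})_k$. Proposition \ref{lemme pi_{k} rho_{k}} then immediately yields a bound of the stated form, with $\pi^{sup} = \max_{0\le k\le N} p_{k,j}$ appearing in place of the explicit constant $(NC_{t^*})^j$; the first task is therefore to replace $\pi^{sup}$ by the explicit uniform bound.

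For that, I would invoke Assumption \ref{hyp_t*_bnd}: as observed in the proof of Proposition \ref{conv_rkj}, the inter-jump times are a.s. bounded by $C_{t^*}$, so $T_k \le k C_{t^*} \le N C_{t^*}$ a.s. for all $k \le N$, and hence $\tau \wedge T_N \le N C_{t^*}$ on the event $\{\tau \le T_N\}$. Consequently $p_{k,j} = \E_\mu[\tau^j \mid \tau \le T_k] \le (N C_{t^*})^j$ for every $k$ (and $j$), so $\pi^{sup} \le (N C_{t^*})^j$. Plugging this into the conclusion of Proposition \ref{lemme pi_{k} rho_{k}} gives exactly the displayed inequality: the $\pi^{sup}|q_{k-1}-\widehat q_{k-1}|$ term becomes $(NC_{t^*})^j|q_{k-1}-\widehat q_{k-1}|$, the $|\pi_{k-1}-\widehat\pi_{k-1}|$ and $|\sigma_{k-1}-\widehat\sigma_{k-1}|$ terms become $|p_{k-1,j}-\widehat p_{k-1,j}|$ and $|r_{k-1,j}-\widehat r_{k-1,j}|$, and the last term becomes $\tfrac{2((NC_{t^*})^j+1)}{\tilde q^2}|q_k-\widehat q_k|$. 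The hypothesis on the quantization error in the present theorem is literally the hypothesis of Proposition \ref{lemme pi_{k} rho_{k}} (with $q$ playing the role of $\beta$), so it applies verbatim.

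Finally, convergence to zero follows by induction on $k$: the base case $\widehat p_{0,j} = p_{0,j} = 0$ is immediate; for the induction step, each of the four terms on the right-hand side tends to zero as the quantization errors vanish — the $|q_{k-1}-\widehat q_{k-1}|$ and $|q_k-\widehat q_k|$ terms by Proposition \ref{conv_qk}, the $|r_{k-1,j}-\widehat r_{k-1,j}|$ term by Proposition \ref{conv_rkj} (which requires Assumptions \ref{hyp_u*_lip}.a, \ref{hyp-proba-U-apha}, \ref{hypNonRetourU} and \ref{hyp_t*_bnd}), and the $|p_{k-1,j}-\widehat p_{k-1,j}|$ term by the induction hypothesis. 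I do not expect any real obstacle here: the entire content has been front-loaded into Proposition \ref{lemme pi_{k} rho_{k}} and Propositions \ref{conv_qk}--\ref{conv_rkj}, and the only genuinely new ingredient is the a.s. bound $\tau \wedge T_N \le N C_{t^*}$ that replaces the abstract $\pi^{sup}$ by an explicit quantity — which is precisely why Assumption \ref{hyp_t*_bnd} (and, through Proposition \ref{conv_rkj}, the boundedness part of Assumption \ref{hyp_u*_lip}) is needed for the moments but, as noted in Remark \ref{rq-u*-born�-useless}, was not needed for the distribution.
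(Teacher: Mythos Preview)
Your proposal is correct and follows essentially the same approach as the paper's proof: apply Proposition~\ref{lemme pi_{k} rho_{k}} with $\sigma_k = r_{k,j}$ and $\widehat\sigma_k = \widehat r_{k,j}$, then bound $\pi^{\sup}\le (NC_{t^*})^{j}$ using $\tau\le T_k\le kC_{t^*}$ on $\{\tau\le T_k\}$ (from Assumption~\ref{hyp_t*_bnd}). One small correction to your closing parenthetical: neither Proposition~\ref{conv_rkj} nor the present theorem invokes Assumption~\ref{hyp_u*_lip}.b --- only the Lipschitz part~\ref{hyp_u*_lip}.a is used, the relevant boundedness coming from $(\widehat T_k+u^*(\widehat Z_k))\wedge \widehat T_{k+1}\le \widehat T_{k+1}\le (k+1)C_{t^*}$ via Assumption~\ref{hyp_t*_bnd}.
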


\begin{Remark}The rate of convergence depends on the quantity $\tilde{q}$ whose exact value might be unknown in some complex applications. In that case, it may still be approximated through Monte-Carlo simulations (see examples in Section \ref{section-examples}). Nevertheless, Theorems \ref{Theo_conv_pk} and \ref{Theo_conv_pkj} prove the convergence of our approximation schemes regardless of the value of $\tilde{q}$.
\end{Remark}

\begin{proof} Let $j\in\N$ and apply Proposition \ref{lemme pi_{k} rho_{k}} with $(\sigma_{k})_{k}=(r_{k,j})_{k}$ and $(\widehat{\sigma}_{k})_{k}=(\widehat{r}_{k,j})_{k}$ such that $(\pi_{k})_{k}=(p_{k,j})_{k}$ and $(\widehat{\pi}_{k})_{k}=(\widehat{p}_{k,j})_{k}$. Finally, according to Remark \ref{rq-conv-rate-pi_{k}}, a bound for the rate of convergence is obtained since the sequence $(p_{k,j})_{0 \leq k \leq N}$ is bounded by:
\begin{equation*}
p_{k,j}=\mathbf{E}_{\mu}\left[\tau^{j}\big|\tau\leq T_k\right]
\leq\mathbf{E}_{\mu}\left[T_{k}^{j}\big|\tau\leq T_k\right]
\leq\mathbf{E}_{\mu}\left[(kC_{t^{*}})^{j}\big|\tau\leq T_k\right]
\leq (kC_{t^{*}})^{j}
\leq (NC_{t^{*}})^{j}.
\end{equation*}
Hence, the result. \hfill$\Box$
\end{proof}

\section{Examples and numerical results}\label{section-examples}

\subsection{A Poisson process}\label{exemple-Poisson}

Let $N_t$ be a Poisson process with parameter~$\lambda=1$ and let~$Y_t=t+N_t$. $(Y_t)_{t\geq 0}$ is a PDMP with state space $E=\mathbb{R}$~; inter jump times~$S_k$ have independent exponential distribution with parameter $\lambda=1$ ; the flow is defined on~$(\mathbb{R}^+)^2$ by $\Phi(x,t)=x+t$~; and finally, the post-jump locations satisfy : $\forall x\in E$, $Q(\{x+1\},x)=1$. An example of trajectory of the process is represented in figure \ref{trajectory poisson}. We are interested in the exit time problem for the process~$(Y_t)_{t\geq 0}$. The study of this process is especially interesting because it is possible to compute the exact value of its distribution function in order to compare it with the numerical value given by our approximation scheme.\\

\begin{figure}[h]
\begin{center}
\includegraphics[scale=.5]{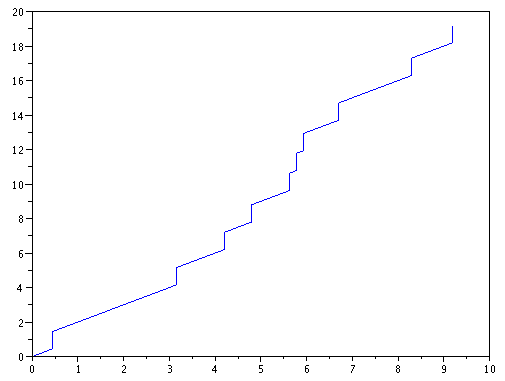}
\caption{A trajectory of the process $(Y_t)$ drawn until the $10^{th}$ jump time.}
\label{trajectory poisson}
\end{center}
\end{figure}

Let us turn now to the numerical simulations. Let $b=10$ i.e. $U=]-\infty,10[$. We may choose $N=10$ since $Y_{T_N}=T_N+N_{T_N}=T_N+N\geq N$. Besides, it is clear that for all $y\in ]-\infty,10[$, $u^{*}(y)=10-y$. Assumptions \ref{hyp_u*_lip} and \ref{hyp-U-convex} are clearly satisfied and so is Assumption \ref{hyp-proba-U-apha} thanks to the following lemma. 
\begin{Lemma}For all $\alpha>0$ and for all $k\in \{0,...,N\}$,
$$\PP_{\mu}\left(Z_{k}\in U^{\alpha}\right)\leq 2\alpha.$$
\end{Lemma}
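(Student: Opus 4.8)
The plan is to compute the law of $Z_k$ explicitly and to observe that it has a density bounded by $1$; since $U^\alpha$ is an interval of length $2\alpha$, the announced bound will then be immediate. As a preliminary step I would identify $U^\alpha$: here $U=]-\infty,10[\subset E=\R$, so its boundary is the single point $\partial U=\{10\}$, hence $d(y,\partial U)=|y-10|$ and therefore $U^\alpha=[10-\alpha,10+\alpha]$.

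The key structural fact for this example is that $Z_k=Y_{T_k}=T_k+k$, because $N_{T_k}=k$ (equivalently, this follows by induction from $\Phi(x,t)=x+t$ and $Q(\{x+1\},x)=1$), where $T_k=S_1+\cdots+S_k$ and the inter-jump times $S_i$ are independent and exponentially distributed with parameter $1$. For $k=0$ we have $Z_0=0$ almost surely, so $\PP_\mu(Z_0\in U^\alpha)=0$ when $\alpha<10$ and $\PP_\mu(Z_0\in U^\alpha)\leq 1\leq 2\alpha$ when $\alpha\geq 10$; the inequality holds in both cases. For $k\geq 1$, I would write $Z_k=S_1+W_k$ with $W_k:=S_2+\cdots+S_k+k$ independent of $S_1$. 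Since $S_1$ has density $s\mapsto e^{-s}\1_{\{s>0\}}$, bounded by $1$, the law of $Z_k$ is the convolution of the law of $W_k$ with a density bounded by $1$, hence $Z_k$ admits a density $f_{Z_k}$ with $\|f_{Z_k}\|_\infty\leq 1$. (Alternatively, $Z_k-k$ is a Gamma$(k,1)$ variable whose density attains its maximum at $k-1$ and is bounded by $1$ by Stirling's inequality.)

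It then remains to conclude: for $k\geq 1$,
\[
\PP_\mu(Z_k\in U^\alpha)=\int_{10-\alpha}^{10+\alpha}f_{Z_k}(z)\,dz\leq 2\alpha\,\|f_{Z_k}\|_\infty\leq 2\alpha .
\]
There is no genuine obstacle in this argument; the only points deserving a little care are to treat the degenerate case $k=0$ separately (since $Z_0$ has no density) and to notice that it is precisely the bound $1$ on the exponential density that produces the constant $2$ appearing in the statement. Incidentally, this is the concrete mechanism by which Assumption~\ref{hyp-proba-U-apha} is verified here, with $C=2$ and $\beta=1$.
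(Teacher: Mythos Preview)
Your proof is correct and follows essentially the same line as the paper: identify $U^\alpha=[10-\alpha,10+\alpha]$, treat $k=0$ separately, and for $k\geq 1$ use that $Z_k=k+T_k$ has a density bounded by~$1$ to integrate over an interval of length $2\alpha$. The only cosmetic difference is that the paper bounds the Gamma$(k,1)$ density directly by its maximum $C_k=\frac{1}{(k-1)!}\big(\tfrac{k-1}{e}\big)^{k-1}\leq 1$, whereas your primary argument obtains the same bound more cleanly via convolution with the exponential density of $S_1$.
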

\begin{proof} Since $Z_{0}=0$ a.s., $\PP_{\mu}\left(Z_{0}\in U^{\alpha}\right)=\PP_{\mu}\left(Z_{0}\in [10-\alpha,10+\alpha]\right)=\1_{\{\alpha\geq 10\}}\leq \frac{1}{10}\alpha\leq 2\alpha$. \\
Let now $k\in \{1,...,N\}$. Denote $f_{\gamma(k,1)}$ the density of the distribution $\gamma(k,1)$ and denote $C_{k}=\frac{1}{(k-1)!}\left(\frac{k-1}{e}\right)^{k-1}$ its bound. Since $T_{k}$ has distribution $\gamma(k,1)$, $Z_{k}=k+T_{k}$ has density $f_{Z_{k}}(\cdot)=f_{\gamma(k,1)}(\cdot-k)$ that is also bounded by $C_{k}$. Eventually, one has:
\begin{equation*}
\PP_{\mu}\left(Z_{k}\in U^{\alpha}\right)=\PP_{\mu}\left(Z_{k}\in [10-\alpha,10+\alpha]\right)\leq 2C_{k}\alpha\leq 2\alpha.
\end{equation*}
Indeed, the sequence $(C_{k})_{k}$ decreases so that for all $k\in \{1,...,N\}$, $C_{k}\leq C_{1}=1.$
\hfill$\Box$
\end{proof}

Moreover, Assumption \ref{hypNonRetourU} is satisfied since the process increases but Assumption \ref{hyp_t*_bnd} is not, because $t^{*}(x)=+\infty$ for all $x\in E$. However, as pointed out in Section \ref{section-exit-time}, this can be solved by considering the process killed at time $\tau$.

\subsubsection*{The mean exit time}

Table \ref{poisson-moment1} presents the simulations results for the approximation of the mean exit time. It includes for different number of points in the quantization grids the value of $\widehat{p}_{N,1}$ which approximates the mean exit time. A reference value is obtained thanks to Monte Carlo method ($10^{6}$ simulations): $\E[\tau_{10}]_{\text{Monte Carlo}}=5.125$.

\begin{table}[h]
\begin{center}
\begin{tabular}{|c||c|c|}
\hline
Points in the quantization grids &$\widehat{p}_{N,1}$ & relative error to $5.125$\\
\hline
$20$ points&5.050&1.46 \% \\
\hline
$50$ points& $5.096$ & 0.56 \%\\
\hline
$100$ points& $5.095$ & 0.58 \%\\
\hline
$200$ points& $5.118$ & 0.13 \% \\
\hline
$300$ points& 5.128& 0.06 \%\\
\hline
$500$ points& 5.123& 0.03 \%\\
\hline
\end{tabular}
\end{center}
\caption{Simulations results for the mean exit time}
\label{poisson-moment1}
\end{table}

\subsubsection*{The second moment}

We present the results of the approximation of the second moment in Table \ref{poisson-moment2}. Our Monte Carol reference value ($10^{6}$ simulations) is $\E[\tau_{10}^{2}]_{\text{Monte Carlo}}=27.5$.

\begin{table}[h]
\begin{center}
\begin{tabular}{|c||c|c|}
\hline
Points in the quantization grids &$\widehat{p}_{N,2}$ & relative error to $27.5$\\
\hline
$20$ points&26.66&3.05 \% \\
\hline
$50$ points& $27.20$ & 1.11 \%\\
\hline
$100$ points& $27.21$ & 1.05 \%\\
\hline
$200$ points& $27.43$ & 0.25 \% \\
\hline
$300$ points& 27.54& 0.13 \%\\
\hline
$500$ points& 27.49& 0.03 \%\\
\hline
\end{tabular}
\end{center}
\caption{Simulations results for the second moment}
\label{poisson-moment2}
\end{table}

For the first and second moment, the empirical convergence rate is presented on Figure \ref{rate-moment-poisson}.
It is estimated through a regression model as $-1.23$ for the first moment and $-1.39$ for the second moment. Remark that
there are roughly the same order as the rate of convergence of the optimal quantizer (see Theorem \ref{theore})  as here the dimension is $1$.
\begin{figure}[hbtp]
\begin{center}
\includegraphics[scale=.4]{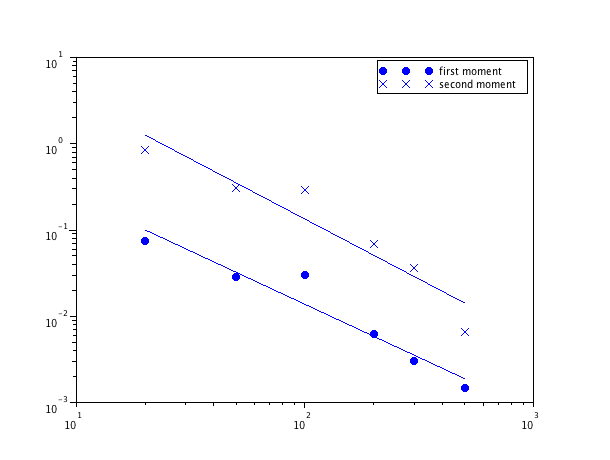}
\caption{Logarithm of the error w.r.t. the logarithm of the number of points in the quantization grids for the first and second moment of the Poisson process.}
\label{rate-moment-poisson}
\end{center}
\end{figure}

\subsubsection*{The exit time distribution}

As announced earlier, one can obtain the exact value of the survival function of the exit time.

\begin{Proposition}\label{prop-distri-espadon}
Denote $fl(.)$ the floor function. For all $s,b\in \R^{+}$, one has :
$$\PP[\tau_{b}\geq s]=
\left\{
\begin{array}{ ll}
\PP[T_{fl(b-s)+1}> s]      & \text{ for all $s\leq b$,}  \\
0 & \text{ otherwise.}
\end{array}\right.
$$
\end{Proposition}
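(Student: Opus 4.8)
The plan is to analyze directly the PDMP $(Y_t)_{t\geq 0}$ with $Y_t = t + N_t$, exploiting the fact that the flow is $\Phi(x,t) = x+t$ and each jump adds exactly $1$. First I would observe that, starting from $Y_0 = 0$, the process is strictly increasing and hits the level $b$ exactly once, so $\tau_b = \inf\{s \geq 0 : Y_s \geq b\}$ is well-defined and $\tau_b \leq b$ almost surely (since $Y_s \geq s$). In particular $\PP[\tau_b \geq s] = 0$ for $s > b$, which handles the second case immediately. For $s \leq b$, the key identity is $\{\tau_b \geq s\} = \{Y_u < b \text{ for all } u < s\} = \{Y_{s^-} < b\}$, i.e. the process has not yet reached $b$ strictly before time $s$.

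Next I would translate the event $\{Y_{s^-} < b\}$ into a statement about the jump times $T_n$. Between jumps, $Y$ increases at unit rate starting from an integer value (namely $n$ after the $n$-th jump, since $Z_n = n + T_n$ wait— more precisely $Y_{T_n} = n + T_n$ is not an integer; rather the jump count $N$ at time $T_n$ equals $n$, so $Y_t = n + t$ for $T_n \le t < T_{n+1}$). Hence $Y_{s^-} \geq b$ iff $N_{s^-} + s \geq b$, and since $N_{s^-}$ is integer-valued this is equivalent to $N_{s^-} \geq b - s$, i.e. $N_{s^-} \geq \lceil b-s \rceil$. The complementary event $\{Y_{s^-} < b\}$ is therefore $\{N_{s^-} \leq \lceil b - s\rceil - 1\} = \{N_{s^-} < \lceil b-s\rceil\}$. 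I would then convert the count condition on $N_{s^-}$ into a jump-time condition: $N_{s^-} < m$ if and only if the $m$-th jump has not occurred strictly before $s$, i.e. $T_m \geq s$. Taking $m = \lceil b-s \rceil$ — which, expressed with the floor function as in the statement, is $fl(b-s)+1$ except on the measure-zero set where $b-s \in \N$ — gives $\PP[\tau_b \geq s] = \PP[N_{s^-} < \lceil b-s\rceil] = \PP[T_{fl(b-s)+1} > s]$, using that $\PP[T_m = s] = 0$ for the Gamma-distributed $T_m$.

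The main obstacle, which is really just bookkeeping, is the careful handling of the boundary cases: whether to use strict or non-strict inequalities (the statement uses $\geq s$ for $\tau_b$ but $> s$ for $T_{\cdot}$), and the relation between $\lceil b-s\rceil$ and $fl(b-s)+1$ when $b-s$ is an integer. Since $T_m$ has a continuous ($\gamma(m,1)$) distribution, the events $\{T_m > s\}$ and $\{T_m \geq s\}$ have equal probability, so the discrepancy is immaterial; similarly $N_{s^-} = N_s$ almost surely because $s$ is a.s. not a jump time. I would write the argument cleanly by first proving the intermediate identity $\{\tau_b \geq s\} = \{T_{fl(b-s)+1} > s\}$ up to a null set and then passing to probabilities. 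This finishes the proof.

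\bigskip

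\noindent\emph{(End of proposed proof; the author's proof follows below.)}
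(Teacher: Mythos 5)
Your plan is essentially the same as the paper's (reduce $\{\tau_b \geq s\}$ to an event about $N_s$, then use the Poisson--Gamma duality $\{N_s \leq n\}=\{T_{n+1}>s\}$), but there is a genuine error in the first reduction that the paper does not make. You assert the identity $\{\tau_b \geq s\}=\{Y_u<b \text{ for all }u<s\}=\{Y_{s^-}<b\}$, but the second equality is wrong: since $Y_t=t+N_t$ is \emph{strictly} increasing, one has $Y_u<Y_{s^-}$ for every $u<s$, so the condition ``$Y_u<b$ for all $u<s$'' is equivalent to $Y_{s^-}\leq b$, with a non-strict inequality. The case $Y_{s^-}=b$ is allowed: the flow then hits the boundary exactly at time $s$, so $\tau_b=s\geq s$. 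This is precisely why the paper writes $\PP[\tau_b\geq s]=\PP[Y_s\leq b]$ and then $\PP[N_s\leq b-s]=\PP[N_s\leq fl(b-s)]$, where the floor appears naturally because $N_s$ is integer-valued, whereas your strict inequality produces $\PP[N_{s^-}<b-s]=\PP[N_{s^-}\leq \lceil b-s\rceil -1]$, which disagrees with $\PP[N_s\leq fl(b-s)]$ exactly when $b-s\in\N$.

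Dismissing $b-s\in\N$ as ``measure zero'' is not licit here. The proposition is a pointwise claim for every $(s,b)\in(\R^+)^2$; there is no randomness in $s$, and the event $\{Y_{s^-}=b\}=\{N_s=b-s\}$ has strictly positive probability whenever $b-s\in\N$ (it is the event that the exit is caused by the deterministic flow rather than a jump). Concretely, take $b=s=1$: your chain gives $\PP[\tau_1\geq 1]=\PP[T_{\lceil 0\rceil}>1]=\PP[T_0>1]=0$, whereas the correct value (and the one given by the proposition) is $\PP[T_1>1]=e^{-1}$. These integer values of $s$ are exactly the ones at which the paper evaluates the survival function in its numerical comparison (e.g.\ $s=1,\dots,10$ for $b=10$), so the discrepancy is not harmless. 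Replacing your $<$ by $\leq$ in the $Y_{s^-}$ step removes the $\lceil\cdot\rceil$ and directly yields $fl(b-s)+1$, which is what the paper does in one line.
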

\begin{Remark}
Notice that $T_{k}$ has distribution $\gamma(k,1)$ so that the right-hand side term in the above proposition can be computed easily.
\end{Remark}
\begin{proof}
Let $s>0$. Notice that $Y_{s}\geq s$, thus $\tau_{b}< s$ a.s. when $s>b$. Assume now that $s\leq b$, one has :
\begin{equation*}
\PP[\tau_{b}\geq s]=\PP[Y_{s}\leq b]=\PP[N_{s}\leq b-s]=\PP[N_{s}\leq fl(b-s)]=\PP[T_{fl(b-s)+1}\geq s].
\end{equation*}
Hence, the result. \hfill$\Box$
\end{proof}

Figure \ref{survival function poisson} represents both the exact survival function of the exit time and its quantized approximation. Table \ref{poisson-distrib} contains the empirical error between the two functions.
For the survival function, the empirical convergence rate is presented on Figure \ref{rate-survival-poisson}.
It is estimated through a regression model as $-1.05$. Remark that
it is roughly the same order as the rate of convergence of the optimal quantizer (see Theorem \ref{theore})  as here the dimension is $1$.

\begin{figure}[htbp]
\begin{center}
\includegraphics[scale=.5]{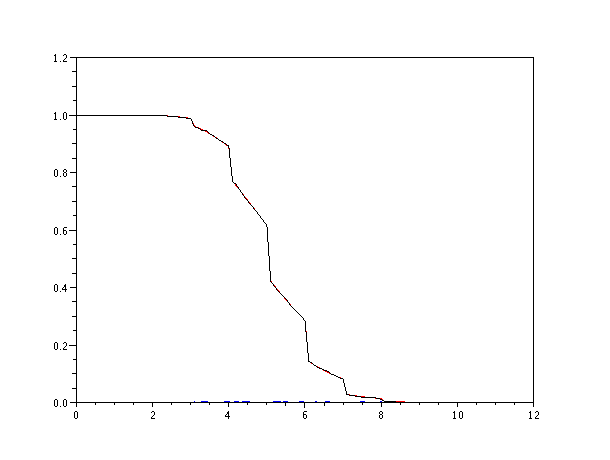}
\caption{Survival function of $\tau_{10}$ and its quantized approximation with 500 points in the quantization grids. The functions appear indistinguishable.}
\label{survival function poisson}
\end{center}
\end{figure}

\begin{table}[h]
\begin{center}
\begin{tabular}{|c||c|}
\hline
Points in the quantization grids &$\max_{s}|p_{N}(s)-\widehat{p}_{N}(s)|$\\
\hline
$20$ points&0.090\\
\hline
$50$ points&0.077\\
\hline
$100$ points&0.057\\
\hline
$200$ points &0.011\\
\hline
$300$ points &0.007\\
\hline
$500$ points &0.005\\
\hline
\end{tabular}
\end{center}
\caption{Simulations results for the distribution}
\label{poisson-distrib}
\end{table}

\begin{figure}[htbp]
\begin{center}
\includegraphics[scale=.4]{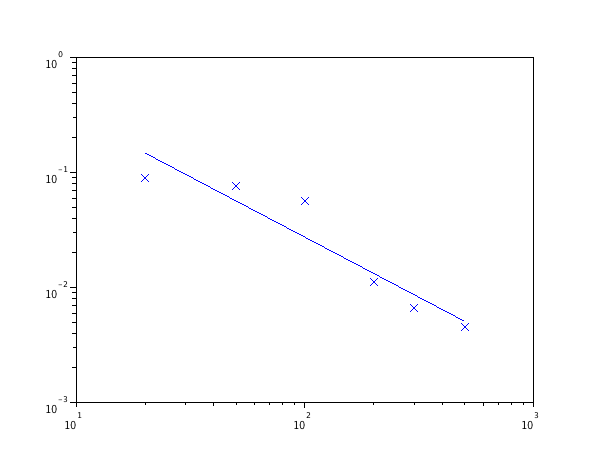}
\caption{Logarithm of the error w.r.t. the logarithm of the number of points in the quantization grids for the survival function of the 
Poisson process.}
\label{rate-survival-poisson}
\end{center}
\end{figure}

\begin{Remark}
We already insisted on the fact that our approach is flexible w.r.t. $U$. In this example, one could obtain very quickly the mean exit time or the exit time distribution for a different set $U'=]-\infty,b']$ for any $0<b'\leq b=10$. Indeed, $\PP(\tau_{b'}>T_{10})=0$ so that it is not necessary to compute new quantization grids.
\end{Remark}

\begin{Remark}
Recall that the value of $T_{k}$ may be obtained from $Z_{k}$ since $T_{k}=Z_{k}-k$ so that it is sufficient to quantize the process $(Z_{k})_{k\leq N}$ instead of $(Z_{k},T_{k})_{k\leq N}$. The reduction of the dimension of the process that has to be quantized results in an improvement of the convergence rate and it appears that the approximations presented in the previous tables converge indeed very quickly.
\end{Remark}

\subsubsection*{Convergence rate for the exit time distribution}

One may notice from the proof of Proposition \ref{conv_rk} that a bound for the rate of convergence for the exit time distribution may be obtained as soon as for all $k\in \{0,...,N-1\}$, the survival function of $(T_{k}+u^{*}(Z_{k}))\wedge T_{k+1}$ denoted $\varphi_{k}$ is piecewise Lipschitz continuous. Although it is tough to state general assumptions under which this is true, the following proposition proves that the condition is fulfilled in our example.

\begin{Proposition} For all $k\in \{0,...,N-1\}$, the survival function $\varphi_{k}$ of $(T_{k}+u^{*}(Z_{k}))\wedge T_{k+1}$ is Lipschitz continuous on $]-\infty;b-k[$ and on $]b-k;+\infty[$ with Lipschitz constant $[\varphi_{k}]\leq 1$.
\end{Proposition}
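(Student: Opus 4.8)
The plan is to compute $\varphi_k$ in closed form and then to read off the bound $[\varphi_k]\le1$ from the uniform bound on Gamma densities already used in the lemma above.

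\emph{First step: reducing to a truncated jump time.} In this example $u^{*}(y)=b-y$ for every $y\in U$, and $Z_k=k+T_k$ almost surely (each jump increases the post-jump location by $1$ while the flow adds the elapsed time, so telescoping gives $Z_k=Z_0+T_k+k=T_k+k$). Hence, on the event $\{Z_k\in U\}$ — the only one entering the definition of $r_k$, and therefore of $\varphi_k$ — one has $T_k+u^{*}(Z_k)=T_k+b-Z_k=b-k$, a deterministic constant. I would then deduce
$$(T_k+u^{*}(Z_k))\wedge T_{k+1}=(b-k)\wedge T_{k+1},$$
so that $\varphi_k(s)=\PP_\mu\big((b-k)\wedge T_{k+1}>s\big)$, which equals $\PP_\mu(T_{k+1}>s)$ for $s<b-k$ and equals $0$ for $s>b-k$.

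\emph{Second step: Lipschitz bound on each piece.} On $]b-k;+\infty[$ the function $\varphi_k$ is identically zero, hence trivially Lipschitz with constant $0\le1$. On $]-\infty;b-k[$ one has $\varphi_k(s)=1-F_{T_{k+1}}(s)$, where $F_{T_{k+1}}$ is the distribution function of the $\gamma(k+1,1)$ law; this is absolutely continuous with $\varphi_k'(s)=-f_{T_{k+1}}(s)$, $f_{T_{k+1}}$ being the $\gamma(k+1,1)$ density. As in the lemma above, $f_{T_{k+1}}$ is bounded above by $C_{k+1}=\frac{1}{k!}\big(\frac{k}{e}\big)^{k}$ (its value at $t=k$), and since the sequence $(C_j)_j$ is nonincreasing with $C_1=1$ one obtains $\|f_{T_{k+1}}\|_{\infty}\le C_1=1$ for every $k\in\{0,\dots,N-1\}$. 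Hence $|\varphi_k'|\le1$ on $]-\infty;b-k[$, which gives $[\varphi_k]\le1$ there and completes the argument.

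The only delicate point is the cancellation $T_k+u^{*}(Z_k)=b-k$ in the first step; once this is noticed, the statement reduces to the elementary density bound already available, so I do not expect any real obstacle.
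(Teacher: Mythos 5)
Your proof is correct and follows essentially the same route as the paper's: both reduce $(T_{k}+u^{*}(Z_{k}))\wedge T_{k+1}$ to the simpler variable $(b-k)\wedge T_{k+1}$ using $Z_{k}=k+T_{k}$ and $u^{*}(y)=b-y$, and both then obtain $[\varphi_{k}]\leq 1$ from the bound $\sup_{t}f_{\gamma(k+1,1)}(t)=C_{k+1}\leq C_{1}=1$ on the Gamma density. The only stylistic difference is that you bound $|\varphi_{k}'|$ after computing $\varphi_{k}$ explicitly on each piece, while the paper works directly with the increment $|\varphi_{k}(s')-\varphi_{k}(s)|$ inside the defining integral.
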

\begin{proof} 
Let $k=0$ and $s>0$, one has:
\begin{eqnarray*}
\varphi_{0}(s)&=&\PP_{\mu}((T_{0}+u^{*}(Z_{0}))\wedge T_{1}>s)\\
&=&\PP_{\mu}(b \wedge T_{1}>s)\\
&=&\1_{\{b>s\}}\PP_{\mu}(T_{1}>s)\\
&=&\1_{\{b>s\}}e^{- s} \qquad \text{ since $T_{1}$ has exponential distribution with parameter $1$.}
\end{eqnarray*}
Therefore, the function $\varphi_{0}$ is worth zero on $[b;+\infty[$ and is Lipschitz continuous with Lipschitz constant 1 on $]0;b[$.

Let $k\geq 1$, $s>0$ and remember that the random variables $(S_{j})_{j\geq 0}$ are independent and all have exponential distribution with parameter $1$ so that, in particular, $T_{k}$ and $S_{k+1}$ are independent and $T_{k}$ has distribution $\gamma(k,1)$. Moreover, recall that $Z_{k}=k+T_{k}$ and that $u^{*}(x)=b-x$.
\begin{align*}
\varphi_{k}(s)&=\PP_{\mu}((T_{k}+u^{*}(Z_{k}))\wedge T_{k+1}>s)\\
&=\int_{(\R^{+})^{2}}\1_{\{(t+(b-k-t))\wedge u>s\}}f_{\gamma(k,1)}(t)f_{\gamma(k+1,1)}(u)dtdu
\end{align*}
where $f_{\gamma(j,1)}$ denotes the density function of the distribution $\gamma(j,1)$ for $j\in\{k,k+1\}$.

\noindent Let $s'>s>0$, one has:
\begin{align*}
|\varphi_{k}(s')-\varphi_{k}(s)|&\leq\int_{(\R^{+})^{2}}\left|\1_{\{(b-k)\wedge u>s'\}}-\1_{\{(b-k)\wedge u>s\}}\right|f_{\gamma(k,1)}(t)f_{\gamma(k+1,1)}(u)dtdu\\
&\leq\int_{(\R^{+})^{2}}\1_{\{(b-k)\wedge u\in]s;s']\}}f_{\gamma(k,1)}(t)f_{\gamma(k+1,1)}(u)dtdu\\
&\leq\int_{(\R^{+})^{2}}\left(\1_{\{b-k\in]s;s']\}}+\1_{\{u\in]s;s']\}}\right)f_{\gamma(k,1)}(t)f_{\gamma(k+1,1)}(u)dtdu\\
&\leq \1_{\{b-k\in[s;s']\}}+C_{f_{\gamma(k+1,1)}}|s'-s|\\
&\leq \1_{\{b-k\in[s;s']\}}+|s'-s|\qquad\text{since $C_{f_{\gamma(k+1,1)}}=\frac{1}{(k)!}\left(\frac{k}{e}\right)^{k}\leq 1$.}
\end{align*}
Eventually if $s$ and $s'$ both belong to $]0;b-k[$ or if they both belong to $]b-k;+\infty[$, one has $|\varphi_{k}(s')-\varphi_{k}(s)|\leq |s'-s|$. The result follows.
\hfill $\Box$\\
\end{proof}

Consequently, in this example, we are now able to state a bound for the rate of convergence of the exit time distribution approximation scheme. The following proposition is therefore an improvement over Proposition \ref{conv_rk} and Theorem \ref{Theo_conv_pk}.
\begin{Proposition}For all $k\in\{0,...,N-1\}$, let $s>0$ and assume that the quantization error be small enough to ensure that $$\left(\frac{p}{2}\right)^{\frac{1}{p+1}}\Big(\|T_k-\widehat{T}_k\|_{p}+\|Z_k-\widehat{Z}_k\|_{p}+\|T_{k+1}-\widehat{T}_{k+1}\|_{p}\Big)^{\frac{p}{p+1}}<|b-k-s|,$$ 
one has then
\begin{align*}
|r_k(s)-\widehat{r}_{k}(s)|\leq &\text{ }2\left(\frac{p}{2}\right)^{\frac{1}{p+1}}\left(\frac{1}{p}+1\right) \Big(\|T_k-\widehat{T}_k\|_{p}+\|Z_k-\widehat{Z}_k\|_{p}+\|T_{k+1}-\widehat{T}_{k+1}\|_{p}\Big)^{\frac{p}{p+1}}\\
&+\big|q_{k}-\widehat{q}_{k}\big|+\big|q_{k+1}-\widehat{q}_{k+1}\big|.
\end{align*}
Moreover, for all $k\in\{0,...,N\}$, if the quantization error is such that for all $l\leq k$
$$2\left(\frac{p}{2}\right)^{\frac{1}{p+1}}\left(\frac{1}{p}+1\right) \|Z_{l}-\widehat{Z}_{l}\|_{p}^{\frac{p}{p+1}}\leq \frac{1}{2}\tilde{q},$$
one has then
\begin{align*}
|p_{k}(s)-\widehat{p}_{k}(s)|\leq &\frac{2}{\tilde{q}}\Big(|q_{k-1}-\widehat{q}_{k-1}|+|p_{k-1}(s)-\widehat{p}_{k-1}(s)|+|r_{k-1}(s)-\widehat{r}_{k-1}(s)|\Big)\\
&+\frac{4}{\tilde{q}^2}|q_{k}-\widehat{q}_{k}|.
\end{align*}
\end{Proposition}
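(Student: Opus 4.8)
The plan is to specialize the arguments of Proposition \ref{conv_rk} (for the term $r_k(s)$) and of Proposition \ref{lemme pi_{k} rho_{k}} (for the term $p_k(s)$), taking advantage of three explicit facts available in this example: $u^*(x)=b-x$, so $[u^*]=1$; Assumption \ref{hyp-proba-U-apha} holds with $C=2$ and $\beta=1$ by the lemma $\PP_\mu(Z_k\in U^\alpha)\leq 2\alpha$; and the survival function $\varphi_k$ of $(T_k+u^*(Z_k))\wedge T_{k+1}$ is Lipschitz with constant at most $1$ on each of $]-\infty;b-k[$ and $]b-k;+\infty[$, by the previous proposition.

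First I would reuse the decomposition $|r_k(s)-\widehat{r}_k(s)|\leq A+B$ from the proof of Proposition \ref{conv_rk}. The term $B$ is bounded exactly as there, giving $B\leq|q_k-\widehat{q}_k|+|q_{k+1}-\widehat{q}_{k+1}|$. For $A$, setting $\eta=\big|(T_k+u^*(Z_k))\wedge T_{k+1}-(\widehat{T}_k+u^*(\widehat{Z}_k))\wedge\widehat{T}_{k+1}\big|$ and splitting on $\{\eta\leq\alpha\}$ and $\{\eta>\alpha\}$ yields $A\leq\big|\varphi_k(s+\alpha)-\varphi_k(s-\alpha)\big|+\|\eta\|_p^p/\alpha^p$ for every $\alpha>0$. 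The new ingredient is that, away from the single discontinuity point $b-k$, the piecewise-Lipschitz estimate replaces the qualitative ``$s$ not an atom'' argument: as long as $s-\alpha$ and $s+\alpha$ lie on the same side of $b-k$, one has $\big|\varphi_k(s+\alpha)-\varphi_k(s-\alpha)\big|\leq 2\alpha$, hence $A\leq 2\alpha+\|\eta\|_p^p\alpha^{-p}$.

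Next I would optimize in $\alpha$: the map $\alpha\mapsto 2\alpha+\|\eta\|_p^p\alpha^{-p}$ attains its minimum at $\alpha^*=(p/2)^{1/(p+1)}\|\eta\|_p^{p/(p+1)}$, giving, after the elementary identity $(2/p)^{p/(p+1)}=\tfrac{2}{p}(p/2)^{1/(p+1)}$, the bound $A\leq 2(p/2)^{1/(p+1)}(1+\tfrac1p)\|\eta\|_p^{p/(p+1)}$. Using Assumption \ref{hyp_u*_lip}.a and $[u^*]=1$ to estimate $\|\eta\|_p\leq\|T_k-\widehat{T}_k\|_p+\|Z_k-\widehat{Z}_k\|_p+\|T_{k+1}-\widehat{T}_{k+1}\|_p$, the standing hypothesis $(p/2)^{1/(p+1)}\big(\|T_k-\widehat{T}_k\|_p+\|Z_k-\widehat{Z}_k\|_p+\|T_{k+1}-\widehat{T}_{k+1}\|_p\big)^{p/(p+1)}<|b-k-s|$ is precisely what guarantees $\alpha^*<|b-k-s|$, so that $s\pm\alpha^*$ stay on the same side of $b-k$ and the estimate $\big|\varphi_k(s+\alpha^*)-\varphi_k(s-\alpha^*)\big|\leq 2\alpha^*$ is legitimate. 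Combining the bounds on $A$ and $B$ and the monotonicity of $x\mapsto x^{p/(p+1)}$ yields the first displayed inequality.

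Finally, for $|p_k(s)-\widehat{p}_k(s)|$ I would apply Proposition \ref{lemme pi_{k} rho_{k}} with $(\sigma_k)_k=(r_k(s))_k$ and $(\widehat{\sigma}_k)_k=(\widehat{r}_k(s))_k$, so that $(\pi_k)_k=(p_k(s))_k$, $(\widehat{\pi}_k)_k=(\widehat{p}_k(s))_k$, and $\pi^{sup}=1$ since $p_k(s)=\PP_\mu(\tau>s\mid\tau\leq T_k)\in[0,1]$; substituting $\pi^{sup}=1$ turns the coefficient $2(\pi^{sup}+1)/\tilde{q}^2$ into $4/\tilde{q}^2$ and produces the second displayed inequality. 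The smallness hypothesis on the quantization error reduces, through $C=2$, $\beta=1$ and the same algebraic identity $2^{p/(p+1)}\big((1/p)^{p/(p+1)}+p^{1/(p+1)}\big)=2(p/2)^{1/(p+1)}(1+\tfrac1p)$, to exactly the condition required by Proposition \ref{lemme pi_{k} rho_{k}} (namely $|q_j-\widehat{q}_j|\leq\tilde{q}/2$ for all relevant $j$, which holds by Proposition \ref{conv_qk}). The only real obstacle is the treatment of the single jump of $\varphi_k$ at $b-k$; once it is isolated, everything else is a direct transcription of the general estimates with the explicit constants of this example.
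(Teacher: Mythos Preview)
Your proposal is correct and follows essentially the same approach as the paper: decompose $|r_k(s)-\widehat r_k(s)|\le A+B$ as in Proposition~\ref{conv_rk}, use the piecewise Lipschitz bound $|\varphi_k(s+\alpha)-\varphi_k(s-\alpha)|\le 2\alpha$ for $\alpha<|b-k-s|$, optimize $2\alpha+\|\eta\|_p^p\alpha^{-p}$ at $\alpha^*=(p/2)^{1/(p+1)}\|\eta\|_p^{p/(p+1)}$, and then apply Proposition~\ref{lemme pi_{k} rho_{k}} with $\pi^{\sup}=1$ for the bound on $|p_k(s)-\widehat p_k(s)|$. Your additional verification that the standing hypothesis guarantees $\alpha^*<|b-k-s|$ and your explicit check of the algebraic identity matching the smallness condition of Proposition~\ref{lemme pi_{k} rho_{k}} (with $C=2$, $\beta=1$) are details the paper leaves implicit.
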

\begin{proof} The proof derives directly from the proofs of Proposition \ref{conv_rk} and Theorem \ref{Theo_conv_pk}. Simply notice that the $A$ term may be bounded thanks to the piecewise Lipschitz continuity of the functions $\varphi_{k}$ on $]-\infty;b-k[$ and on $]b-k;+\infty[$. Let $s>0$, $s\neq b-k$, and let $\alpha>0$ such that $b-k\not\in[s-\alpha;s+\alpha]$ i.e. $\alpha<|b-k-s|$, one has
\begin{align*}
A&\leq \big|\varphi_k(s+\alpha)-\varphi_k(s-\alpha)\big| + \frac{\|\eta\|_p^p}{\alpha^p}\text{ from the proof of Proposition \ref{conv_rk}}\\
&\leq 2[\varphi_k]\alpha + \frac{\|\eta\|_p^p}{\alpha^p}
\end{align*}
that reaches a minimum when $\alpha=\left(\frac{p\|\eta\|_p^p}{2[\varphi_k]}\right)^{\frac{1}{p+1}}.$ Notice besides that $[\varphi_k]=1$ and $[u^*]=1$. 
\hfill$\Box$
\end{proof}

\begin{Remark}
We can calculate the exact value of $\tilde{q}$ that is the first nonnegative value of the sequence $\big(\PP_{\mu}(Z_{k}\not\in U)\big)_{k}$. One has $\tilde{q}=\PP_{\mu}(Z_{1}\not\in ]-\infty;10[)=\PP_{\mu}(T_{1}\geq 9)=e^{-9}$ because $T_{1}$ has exponential distribution with parameter $1$.
\end{Remark}

\subsection{A corrosion model}

Let us consider a structure of aluminium corroded successively into 3 different environments. Corrosion is prevented by some protection until a random time $\gamma$ when corrosion starts. Then, in each environment $i\in\{1;2;3\}$, the loss of thickness satisfies:
$$d_i(t)=\rho_i\left(t-\gamma+\eta_i\left(e^{-\frac{t-\gamma}{\eta_i}}-1\right)\right)\mathbbm{1}_{\{t\geq\gamma\}}$$
where $\rho_i$ is the corrosion rate ($\rho_{i}$ has a uniform distribution on an interval that depends on the environment $i$) and $\eta_i$ is a constant transition time. The structure goes from environment 1 to environment 2, then from 2 to 3, from 3 to 1 and so on. It remains in environment $i$ for a time $T_i$ which has exponential distribution with parameter $\lambda_i$. When the loss of thickness reaches 0.2 mm, the piece is said to be unusable, this will be the exit criterion. Table \ref{parametres cor} gives the values of the different parameters.

\begin{table}[h]
\begin{center}
\begin{tabular}{|cc||c|c|c|}
\cline{3-5}
\multicolumn{1}{c}{}&&\text{environment 1}&\text{environment 2}&\text{environment 3}\\
\hline
$\lambda_i$ &($\text{h}^{-1}$)& $(17520)^{-1}$ & $(131400)^{-1}$ & $(8760)^{-1}$\\
\hline
$\eta_i$ &(h)& 30000 & 200000& 40000 \\
\hline
$\rho_i$ &(mm.$\text{h}^{-1}$)& $[10^{-6},10^{-5}]$ &  $[10^{-7},10^{-6}]$ &  $[10^{-6},10^{-5}]$ \\
\hline
$\gamma$ &(h)& \multicolumn{3}{|c|}{Weibull distribution with $\alpha=2.5$ and $\beta=11800$} \\
\hline
\end{tabular}
\end{center}
\caption{Numerical values of the parameters of the corrosion model}
\label{parametres cor}
\end{table}

\paragraph{}The loss of thickness will be represented by a PDMP whose modes are the different environments. Let then $M=\{(i,j)\text{ : } i\in\{1,2,3\},j\in \{0,1\}\}$. For $m=(i,j)\in M$, $i$ represents the environment and $j$ is worth $1$ if the protection $\gamma$ is still active and 0 otherwise. For each $m\in M$, let $E_m=\mathbb{R}^4$ and for $\xi\in E_m$, $\xi$ represents the family $(d, s, \rho, \gamma)$ where $d$ is the corroded thickness and $s$ is the time since the last jump. The set $U_{m}$ will therefore be for all $m\in M$, $U_{m}=]-\infty;0.2]\times\mathbb{R}^3$. This set is convex so that Assumption \ref{hyp-U-convex} is satisfied. Finally, the flow in mode $m=(i,j)$ is :
\begin{align*}
\Phi_{(i,0)}(\left(\begin{array}{ccc}d\\ s\\ \rho \\ 0\end{array}\right),t)&=\left(\begin{array}{ccc}d + d_m(t+s) - d_m(s)\\ t+s \\ \rho\\ 0\end{array}\right),\\
\Phi_{(i,1)}(\left(\begin{array}{ccc}0\\ s\\ \rho \\ \gamma\end{array}\right),t)&=\left(\begin{array}{ccc}0\\ t+s \\ \rho\\ (\gamma-t)\mathbbm{1}_{\{\gamma\geq t\}}\end{array}\right).
\end{align*}
The parameters $d$ and $\gamma$ evolve continuously between the jumps but $\rho$ is chosen independently after each jump and is constant along the flow.\\

Let us consider the approximation of the distribution and of the mean exit time. Concerning the first moment, one may notice that $\E_{\mu}[\tau]=\E_{\mu}[\gamma]+\E_{\mu}[\tau']$ where $\gamma$ has Weibull distribution and $\tau'$ represents the exit time in the case of a process without initial protection against corrosion (i.e. $\gamma=0$). Therefore, it is sufficient to check whether $\tau'$ satisfies the required assumptions. Hence, let $\gamma=0$ and notice that $u^{*}$ is then bounded since $\rho\geq 10^{-7}$ and $\eta\leq 200000$ so that $d_{m}(t)\geq 10^{-7}(t-200000)$ and eventually $u^{*}\leq 0.2\times 10^{7}+200000=2.2\times 10^{6}$ h. Denote by $C_{u^{*}}$ this bound. Concerning the distribution, Assumption \ref{hyp_u*_lip}.b (the boundedness condition on $u^{*}$) is not required according to Remark \ref{rq-u*-bornŽ-useless}. Moreover, it is easy to notice from the proofs of Propositions \ref{conv_rk} and \ref{conv_rkj} that Assumption \ref{hyp_u*_lip}.a (the Lipschitz continuity condition on $u^{*}$) becomes useless in this example thanks to Lemma \ref{prop-u*-lip-cor}. Assumption \ref{hyp-proba-U-apha} follows from Lemma \ref{prop-proba-U-alpha-cor} below. Eventually, Assumption \ref{hypNonRetourU} is satisfied but Assumption \ref{hyp_t*_bnd} is not. However, considering the process killed at time $\tau$ solves this issue.

\begin{Lemma}\label{prop-proba-U-alpha-cor}
For all $\alpha>0$ and for all $k\in \{0,...,N\}$,
$$\PP_{\mu}(Z_{k}\in U^{\alpha})\leq 5\alpha.$$
\end{Lemma}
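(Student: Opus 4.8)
The plan is to bound $\PP_{\mu}(Z_{k}\in U^{\alpha})$ by splitting on the mode and exploiting the fact that the exit set $U^{\alpha}$ only constrains the first coordinate $d$ of $\xi=(d,s,\rho,\gamma)$. Since $U_{m}=]-\infty;0.2]\times\R^{3}$ for every $m$, we have $\partial U_{m}=\{0.2\}\times\R^{3}$ and hence $d(x,\partial U)\leq\alpha$ (for $x=(m,\xi)$ with $\xi\in E_{m}$) is equivalent to $|d-0.2|\leq\alpha$, i.e. $d\in[0.2-\alpha;0.2+\alpha]$. So it suffices to show that, for each $k\in\{0,\dots,N\}$ and each mode $m$, the first marginal of the law of $Z_{k}$ restricted to mode $m$ has a density bounded by a universal constant, and then sum over the (finitely many) modes.

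First I would describe the law of the $d$-coordinate of $Z_{k}$. By construction, just before a jump the corroded thickness is a deterministic function of the past, and the post-jump value $Z_{k}$ inherits the thickness from $X_{T_{k}^{-}}$ since the jump only resets $s$ and redraws $\rho$ (and possibly switches off $\gamma$). The thickness accumulated up to time $T_{k}$ is $d=\sum_{\ell}\big(d_{m_\ell}(\cdot)-d_{m_\ell}(\cdot)\big)$ along the successive modes; crucially, $d$ depends on $\rho$ (which is uniformly distributed on a bounded interval with bounded density, independently redrawn at each jump) through a factor that is strictly monotone in $\rho$ with a derivative bounded away from $0$. Conditioning on all the other randomness (the jump times, the modes, the protection time $\gamma$), the map $\rho\mapsto d$ is, on the relevant range, $C^{1}$ with $\big|\tfrac{\partial d}{\partial\rho}\big|$ bounded below by a constant $c>0$ coming from the explicit form of $d_{i}(t)=\rho_{i}(t-\gamma+\eta_{i}(e^{-(t-\gamma)/\eta_{i}}-1))\1_{\{t\geq\gamma\}}$ together with $\rho\geq 10^{-7}$; hence the conditional density of $d$ is bounded by (bound on the density of $\rho$)$/c$. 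Integrating out the conditioning preserves the bound, and summing over the $6$ modes gives a uniform bound $C$ on the density of the $d$-marginal of $Z_{k}$.

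Then the conclusion is immediate:
\begin{equation*}
\PP_{\mu}(Z_{k}\in U^{\alpha})=\sum_{m\in M}\PP_{\mu}\big(Z_{k}\in\{m\}\times([0.2-\alpha;0.2+\alpha]\times\R^{3})\big)\leq C\cdot 2\alpha,
\end{equation*}
and one checks that the numerical constants are such that $2C\leq 5$, which is exactly the claimed bound $\PP_{\mu}(Z_{k}\in U^{\alpha})\leq 5\alpha$. (The case $k=0$ is handled separately and trivially, since $Z_{0}$ has thickness $0<0.2-\alpha$ once $\alpha$ is small, and for large $\alpha$ the bound $5\alpha\geq 1$ holds automatically.)

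The main obstacle is the change-of-variables step: one must argue carefully that $d$ genuinely has a density in each mode, which requires isolating one source of continuous randomness with a non-degenerate effect on $d$. The natural choice is the most recently drawn $\rho$, because it is uniformly distributed with a bounded density and enters $d_{i}(t)$ linearly; the point to verify is that the coefficient multiplying $\rho$, namely $t-\gamma+\eta_{i}(e^{-(t-\gamma)/\eta_{i}}-1)$, is bounded away from $0$ on the time intervals that matter (this is an elementary convexity estimate: $u+\eta(e^{-u/\eta}-1)\geq 0$ and grows like $u^{2}/(2\eta)$ for small $u$, and linearly for large $u$). Getting a clean, mode-independent lower bound on this coefficient — and thus a clean constant — is the only delicate part; everything else is bookkeeping over the finitely many modes and the two coordinates that are irrelevant to $U^{\alpha}$.
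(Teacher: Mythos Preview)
Your reduction to $|d-0.2|\leq\alpha$ is correct, and the overall plan --- show the $d$-marginal of $Z_k$ has a uniformly bounded density --- matches the paper. But the change-of-variables step as you describe it does not go through. You propose to condition on all randomness except the most recently drawn $\rho$ and use that $\partial d/\partial\rho$ is bounded below. With the paper's notation $D_k=D_{k-1}+R_kF(S_k)$ where $F(s)=s+\eta(e^{-s/\eta}-1)$, this derivative equals $F(S_k)$, and you yourself note $F(u)\sim u^2/(2\eta)$ for small $u$. Since $S_k$ is exponential, $F(S_k)$ takes arbitrarily small values with positive probability, so there is \emph{no} lower bound $c>0$ and the conditional density of $d$ given $S_k$ blows up. Your ``only delicate part'' is in fact an obstruction, not a bookkeeping issue.

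The paper fixes this by using \emph{both} continuous sources of randomness jointly: it performs the two-dimensional change of variables $(\rho,s)\mapsto(u,v)=(\rho,\rho F(s))$ and then integrates by parts in $u$ to bound the marginal density of $v=R_kF(S_k)$ by $2b_k/(b_k-a_k)$, uniformly in $v$. The integration by parts is what absorbs the singularity of $(F^{-1})'$ near $0$. Once $R_kF(S_k)$ has bounded density, independence of $D_{k-1}$ and $R_kF(S_k)$ plus convolution give the same bound for $D_k$ by induction (conditionally on the index $K$ at which corrosion begins), and the cases $k\leq K$ (where $D_k=0$) and $k>K$ are recombined at the end. So the missing idea is precisely this joint change of variables plus integration by parts; conditioning out $S_k$ and relying on $\rho$ alone cannot produce a uniform density bound.
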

\begin{proof} For notational convenience, introduce $M_{k}$, $D_{k}$, $R_{k}$ and $G_{k}$ the values of $m$, $d$, $\rho$ and $\gamma$ after the $k$-th jump so that $Z_{k}=(M_{k},D_{k},R_{k},G_{k})$. Notice now that 
$$\PP_{\mu}\big(Z_{k}\in U^{\alpha}\big)=\PP_{\mu}\big(|D_{k}-0.2|\leq \alpha\big).$$
We therefore study more precisely the law of $D_{k}$. Let $K=\inf\{k\geq0\text{ such that } G_{k}=0\}$, $K$ is the jump happening at the end of the protection against the corrosion. Eventually, denote $F(s)=s+\eta\big(e^{-\frac{s}{\eta}}-1\big)$. One has then
$$\left\{\begin{array}{ll}
D_{k}=0 & \text{ for $k\leq K$,}\\
D_{k}=D_{k-1}+R_{k}F(S_{k})& \text{ for $k>K$.}
\end{array}\right.$$
Let us now prove that for all $k$, the random variable $R_{k}F(S_{k})$ has a bounded density. Recall that $R_{k}$ has a uniform distribution on $[a_{k};b_{k}]\subset[10^{-7};10^{-5}]$ and $S_{k}$ has an exponential distribution with parameter $\lambda_{k}$. Let now $h$ be a real bounded measurable function,
\begin{align*}
\E_{\mu}[h(R_{k}F(S_{k}))]&=\int_{0}^{+\infty}\int_{a_{k}}^{b_{k}}h(\rho F(s))\frac{1}{b_{k}-a_{k}}\lambda_{k}e^{-\lambda_{k}s}d\rho ds
\end{align*}
Introduce the following transformation
$$\left\{\begin{array}{ll}
u=\rho\\
v=\rho F(s)
\end{array}\right.$$
whose Jacobian is worth $\frac{1}{u}(F^{-1})'(\frac{v}{u})$ so that
\begin{align*}
\E_{\mu}[h(R_{k}F(S_{k}))]&=\int_{0}^{+\infty}h(v)\left(\int_{a_{k}}^{b_{k}}\frac{\lambda_{k}e^{-\lambda_{k}F^{-1}(\frac{v}{u})}(F^{-1})'(\frac{v}{u})}{(b_{k}-a_{k})u}du\right) dv.
\end{align*}
Hence, we obtain the density of the random variable $R_{k}F(S_{k})$ and integration by parts yields
\begin{align*}
\int_{a_{k}}^{b_{k}}\frac{\lambda_{k}e^{-\lambda_{k}F^{-1}(\frac{v}{u})}(F^{-1})'(\frac{v}{u})}{(b_{k}-a_{k})u}du&=\frac{1}{b_{k}-a_{k}}\int_{a_{k}}^{b_{k}}u\times\frac{\lambda_{k}e^{-\lambda_{k}F^{-1}(\frac{v}{u})}(F^{-1})'(\frac{v}{u})}{u^{2}}du\\
&=\frac{1}{b_{k}-a_{k}}\left(\left[ue^{-\lambda_{k}F^{-1}(\frac{v}{u})}\right]_{a_{k}}^{b_{k}}-\int_{a_{k}}^{b_{k}}e^{-\lambda_{k}F^{-1}(\frac{v}{u})}du\right).
\end{align*}
Finally, the density of the random variable $R_{k}F(S_{k})$ is bounded by
\begin{equation*}
\left|\int_{a_{k}}^{b_{k}}\frac{\lambda_{k}e^{-\lambda_{k}F^{-1}(\frac{v}{u})}(F^{-1})'(\frac{v}{u})}{(b_{k}-a_{k})u}du\right|\leq\frac{a_{k}+b_{k}}{b_{k}-a_{k}}+1\leq \frac{2b_{k}}{b_{k}-a_{k}}\leq 2.
\end{equation*}
Let $j\in \N$, we now study the distribution of the random variables $(D_{k})_{k\in\N}$ conditionally to the event $\{K=j\}$. An induction argument provides that, conditionally to the event $\{K=j\}$, the random variable $D_{k}$ has distribution $\delta_{0}$ for $k\leq j$ and has a density $\psi_{k}$ bounded by 2 for $k>j$. Indeed, in the second case, the density of $D_{k}$ may be obtained by convolution since $D_{k-1}$ and $R_{k}F(S_{k})$ are independent random variables.\\
Therefore, for $k\leq j$, $\PP_{\mu}\big(|D_{k}-0.2|\leq\alpha\big|K=j\big)=\1_{\{\alpha\geq 0.2\}}\leq 5\alpha$ since $D_{k}=0$ for $k\leq j$ and for $k>j$, $\PP_{\mu}\big(|D_{k}-0.2|\leq\alpha\big|K=j\big)=\int_{0.2-\alpha}^{0.2+\alpha}\psi_{k}(v)dv\leq 4\alpha$ since $\psi_{k}\leq2$.
Eventually,
\begin{equation*}
\PP_{\mu}(Z_{k}\in U^{\alpha})=\PP_{\mu}(|D_{k}-0.2|\leq\alpha)
=\sum_{j\in\N}\PP_{\mu}(|D_{k}-0.2|\leq\alpha\big|K=j)\PP_{\mu}(K=j)
\leq 5 \alpha.
\end{equation*}
The results follows.\hfill$\Box$
\end{proof}

\begin{Lemma}\label{prop-u*-lip-cor} 
For all $k\in \N$, let 
$$\eta_{k}=\left|\big((T_{k}+u^{*}(Z_{k}))\wedge T_{k+1}\big)-\big((\widehat{T}_{k}+u^{*}(\widehat{Z}_{k}))\wedge \widehat{T}_{k+1}\big)\right|,$$ 
one has for all $\alpha>0$,  $$\|\eta_{k}\|_{p}\leq \|T_{k}-\widehat{T}_{k}\|_{p} + 2\|T_{k+1}-\widehat{T}_{k+1}\|_{p}+\left([u^{*}]_{\frac{\alpha}{2}}+\frac{4C_{u^{*}}}{\alpha}\right)\|Z_{k}-\widehat{Z}_{k}\|_{p}+10 C_{u^{*}}\alpha^{\frac{1}{p}}$$
where $[u^{*}]_{\alpha}=\frac{1+C_{u^{*}}+4\times 10^{5}}{10^{-7}\left(1-e^{-\frac{\alpha}{2}}\right)}$.
\end{Lemma}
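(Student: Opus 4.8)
\emph{Overview and Step 1 (peeling off the $T_{k+1}$ term).} The plan is to reduce the bound on $\|\eta_{k}\|_{p}$ to a \emph{local} Lipschitz estimate for $u^{*}$ and then to split the remaining expectation, as in the proof of Proposition~\ref{conv_rkj}, into a ``good region'' contribution and a ``small probability'' contribution. For the reduction, apply the elementary identity $a\wedge b=b-(b-a)^{+}$ and $|x^{+}-y^{+}|\le|x-y|$ with $a=T_{k}+u^{*}(Z_{k})$, $b=T_{k+1}$ and their hatted analogues; this gives
$$\eta_{k}\le|T_{k+1}-\widehat{T}_{k+1}|+\big|(b-a)^{+}-(\widehat b-\widehat a)^{+}\big|\le 2|T_{k+1}-\widehat{T}_{k+1}|+|T_{k}-\widehat{T}_{k}|+\big|u^{*}(Z_{k})-u^{*}(\widehat{Z}_{k})\big|.$$
Taking $L^{p}$ norms, it remains to prove $\big\|u^{*}(Z_{k})-u^{*}(\widehat{Z}_{k})\big\|_{p}\le\big([u^{*}]_{\alpha/2}+\tfrac{4C_{u^{*}}}{\alpha}\big)\|Z_{k}-\widehat{Z}_{k}\|_{p}+10\,C_{u^{*}}\alpha^{1/p}$.

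\emph{Step 2 (local Lipschitz property of $u^{*}$).} I would show that, within each mode, $u^{*}$ is Lipschitz with constant $[u^{*}]_{\beta}$ on $\{x\in U:\ d(x,\partial U)\ge\beta\}$ for every $\beta>0$. By Remark~\ref{rmq-U-convex}, both $Z_{k}$ and $\widehat{Z}_{k}$ take their values in the set where the ``time since the last jump'' and ``protection'' coordinates vanish and $\rho\in[10^{-7},10^{-5}]$; there $u^{*}=g_{\rho}^{-1}(0.2-d)$, where $g_{\rho}(t)=\rho\big(t+\eta(e^{-t/\eta}-1)\big)$ ($\eta$ the transition time of the mode) is an increasing $C^{1}$ bijection of $[0,\infty)$ with $g_{\rho}'(t)=\rho(1-e^{-t/\eta})$. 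Implicit differentiation of $g_{\rho}(u^{*})=0.2-d$ gives $\partial_{d}u^{*}=-1/g_{\rho}'(u^{*})$ and $\partial_{\rho}u^{*}=-g_{\rho}(u^{*})/\big(\rho\,g_{\rho}'(u^{*})\big)$. Since $g_{\rho}(u^{*})\le\rho u^{*}$, one has $u^{*}\ge(0.2-d)/\rho\ge 10^{5}\beta$ on $\{d(\cdot,\partial U)\ge\beta\}$, whence $g_{\rho}'(u^{*})\ge 10^{-7}(1-e^{-\beta/2})$; moreover $|g_{\rho}(u^{*})/\rho|=|u^{*}-\eta+\eta e^{-u^{*}/\eta}|\le u^{*}+2\eta\le C_{u^{*}}+4\times10^{5}$. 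Hence $|\partial_{d}u^{*}|\le\big(10^{-7}(1-e^{-\beta/2})\big)^{-1}$ and $|\partial_{\rho}u^{*}|\le(C_{u^{*}}+4\times10^{5})\big(10^{-7}(1-e^{-\beta/2})\big)^{-1}$ on this (convex) set, so the mean value inequality together with the definition of the distance on $E$ yields the Lipschitz constant $[u^{*}]_{\beta}$.

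\emph{Step 3 (the case split).} I would decompose $\big\|u^{*}(Z_{k})-u^{*}(\widehat{Z}_{k})\big\|_{p}$ over the three disjoint events $E_{1}=\{|Z_{k}-\widehat{Z}_{k}|>\alpha/4\}$, $E_{2}=\{|Z_{k}-\widehat{Z}_{k}|\le\alpha/4\}\cap\{d(Z_{k},\partial U)\ge 2\alpha\}$ and $E_{3}=(E_{1}\cup E_{2})^{c}$, bounding each piece in $L^{p}$ and summing via the triangle inequality. On $E_{1}$ use $|u^{*}(Z_{k})-u^{*}(\widehat{Z}_{k})|\le C_{u^{*}}$ and $\1_{E_{1}}\le\frac{4}{\alpha}|Z_{k}-\widehat{Z}_{k}|$, which yields $\frac{4C_{u^{*}}}{\alpha}\|Z_{k}-\widehat{Z}_{k}\|_{p}$. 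On $E_{2}$ the two points differ by at most $\alpha/4$ and both lie at distance $\ge\alpha/2$ from $\partial U$ on the same side, so either both are outside $U$ (and the difference vanishes) or Step~2 with $\beta=\alpha/2$ gives $|u^{*}(Z_{k})-u^{*}(\widehat{Z}_{k})|\le[u^{*}]_{\alpha/2}|Z_{k}-\widehat{Z}_{k}|$, hence $[u^{*}]_{\alpha/2}\|Z_{k}-\widehat{Z}_{k}\|_{p}$. Finally $E_{3}=\{|Z_{k}-\widehat{Z}_{k}|\le\alpha/4,\ d(Z_{k},\partial U)<2\alpha\}\subset\{Z_{k}\in U^{2\alpha}\}$, so $\1_{E_{3}}|u^{*}(Z_{k})-u^{*}(\widehat{Z}_{k})|\le C_{u^{*}}\1_{\{Z_{k}\in U^{2\alpha}\}}$, and Lemma~\ref{prop-proba-U-alpha-cor} applied with $2\alpha$ gives $\PP_{\mu}(Z_{k}\in U^{2\alpha})\le 10\alpha$, so this piece has $L^{p}$ norm at most $C_{u^{*}}(10\alpha)^{1/p}\le 10\,C_{u^{*}}\alpha^{1/p}$. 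This gives the announced bound.

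\emph{Main obstacle.} The delicate point is Step~2: one must choose the right ``good region'' — staying at distance $\ge\beta$ from $\partial U$, i.e.\ corroded thickness $\le 0.2-\beta$ — and observe that the elementary inequality $u^{*}\ge(0.2-d)/\rho$ is exactly what converts a lower bound on $0.2-d$ into the lower bound $10^{-7}(1-e^{-\beta/2})$ on the exit speed $g_{\rho}'(u^{*})$, which controls both partial derivatives of $u^{*}$. Once this is in place, tracking the constants and carrying out the three-way case split are routine.
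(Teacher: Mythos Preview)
Your overall strategy---a local Lipschitz estimate for $u^{*}$ away from $\partial U$, followed by a case split with the ``bad'' region controlled via Lemma~\ref{prop-proba-U-alpha-cor}---is exactly the paper's, and your computation of $[u^{*}]_{\beta}$ in Step~2 is correct. However, there is a genuine gap.

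Your claim in Step~2 that ``both $Z_{k}$ and $\widehat{Z}_{k}$ take their values in the set where the \ldots\ `protection' coordinates vanish'' is false. While the corrosion protection is still active the process lives in the modes $(i,1)$, where the thickness stays at $0$ under the flow and hence $u^{*}(x)=+\infty$. Consequently the quantity $|u^{*}(Z_{k})-u^{*}(\widehat{Z}_{k})|$ that your Step~1 reduces to is not even defined on that event, and the bound $|u^{*}(Z_{k})-u^{*}(\widehat{Z}_{k})|\le C_{u^{*}}$ you invoke on $E_{1}$ and $E_{3}$ fails there. (There is also the cemetery state $\Delta$ of the killed process to dispose of.) The paper deals with this by introducing the random index $K=\inf\{k\ge 0: G_{k}=0\}$ and splitting \emph{before} any manipulation of $u^{*}$: on $\{k\le K\}$ both $Z_{k}$ and $\widehat{Z}_{k}$ are in a mode $(i,1)$ (the nearest-neighbour projection preserves the mode), so both minima collapse to $T_{k+1}$ and $\widehat{T}_{k+1}$, giving $\|\eta_{k}\1_{\{k\le K\}}\|_{p}\le\|T_{k+1}-\widehat{T}_{k+1}\|_{p}$. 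Only on $\{k>K\}\cap\{Z_{k}\neq\Delta\}$ is $u^{*}$ finite and bounded by $C_{u^{*}}$, and there the paper uses the simple inequality $|a\wedge b-a'\wedge b'|\le|a-a'|+|b-b'|$ (coefficient~$1$ on $|T_{k+1}-\widehat{T}_{k+1}|$) followed by the same local-Lipschitz split you propose. This is also where the coefficient~$2$ on $\|T_{k+1}-\widehat{T}_{k+1}\|_{p}$ actually comes from in the paper: one copy from $\{k\le K\}$ and one from $\{k>K\}$. If you patch your argument by prepending this split but keep your $a\wedge b=b-(b-a)^{+}$ identity on $\{k>K\}$, you would pick up a coefficient~$3$ instead.
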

\begin{proof} Let $\alpha>0$. Let $\widetilde{U}_{\alpha}=[0,0.2-\alpha]\times\{0\}\times[10^{-7};10^{-5}]\times\{0\}$. We will prove that the function $u^{*}(d,0,\rho,0)$ is Lipschitz continuous on this set. The function $u^{*}(d,0,\rho,0)$ satisfies the following equivalent equations

$$d + d_m(u^{*})=0.2 \qquad \Leftrightarrow \qquad d+\rho\left(u^{*}+\eta\left(e^{-\frac{u^{*}}{\eta}}-1\right)\right)=0.2$$

The implicit equation satisfied by $u^{*}$ yields that, on the set $\widetilde{U}_{\alpha}$, one has $u^{*}\geq \frac{\alpha}{\rho_{max}}=10^{5}\alpha$. This lower bound will be crucial to prove the Lipschitz continuity.
Let $d,d'\leq 0.2-\alpha$ and denote $u=u^{*}(d,0,\rho,0)$ and $u'=u^{*}(d',0,\rho,0)$. Notice that $d + d_m(u)=d' + d_m(u')$ because they are both worth 0.2. Consequently $\left|d_m(u)-d_m(u')\right|=\left|d'-d\right|$ and, noticing that $\eta\leq 2\times 10^{5}$ one has
\begin{align*}
|d-d'|&=\rho\left|u-u'+\eta\big(e^{-\frac{u}{\eta}}-e^{-\frac{u'}{\eta}}\big)\right|\\
&\geq \rho\left(1-e^{-\frac{u\wedge u'}{\eta}}\right)|u-u'|\\
&\geq 10^{-7}\left(1-e^{-\frac{\alpha}{2}}\right)|u-u'|
\end{align*}
that proves the Lipschitz continuity of $u^{*}$ w.r.t. $d$ on $\widetilde{U}_{\alpha}$.\\

Similarly, let $\rho,\rho'\in[10^{-7};10^{-5}]$ and denote $u=u^{*}(d,0,\rho,0)$ and $u'=u^{*}(d,0,\rho',0)$. Notice that $d+\rho\left(u+\eta\left(e^{-\frac{u}{\eta}}-1\right)\right)=d+\rho'\left(u'+\eta\left(e^{-\frac{u'}{\eta}}-1\right)\right)$ because they are both worth 0.2. Subtracting $d+\rho\left(u'+\eta\left(e^{-\frac{u'}{\eta}}-1\right)\right)$ in both terms yields $$\rho\left|u-u'+\eta\big(e^{-\frac{u}{\eta}}-e^{-\frac{u'}{\eta}}\big)\right|=|\rho-\rho'|\left|u'+\eta\left(e^{-\frac{u'}{\eta}}-1\right)\right|.$$ A lower bound for the left-hand side term has already been computed earlier while the right hand-side is easily bounded by $\big(C_{u^{*}}+4\times 10^{5}\big)|\rho-\rho'|$, since $\eta\leq 2\times 10^{5}$, so that one has
$$\big(C_{u^{*}}+4\times 10^{5}\big)|\rho-\rho'|\geq 10^{-7}\left(1-e^{-\frac{\alpha}{2}}\right)|u-u'|$$
that proves the Lipschitz continuity of $u^{*}$ w.r.t. $\rho$ on $\widetilde{U}_{\alpha}$. Eventually, for all $\alpha>0$, the function $u^{*}$ is Lipschitz continuous on $\widetilde{U}_{\alpha}$ with Lipschitz constant $[u^{*}]_{\alpha}=\frac{1+C_{u^{*}}+4\times 10^{5}}{10^{-7}\left(1-e^{-\frac{\alpha}{2}}\right)}$. \\

Let $k\in \N$, we now intend to bound $\|\eta_{k}\|_{p}$. Define, as in the proof of Lemma \ref{prop-proba-U-alpha-cor}, the random variable $K=\inf\{k\geq0\text{ such that } G_{k}=0\}$, $K$ is the jump happening at the end of the protection against the corrosion. \\

First, notice that, on the event $\{k\leq K\}$ (i.e. when the protection from corrosion is still active), one has $Z_{k}\in E_{(i,1)}$ for some $i\in\{1,2,3\}$ and since the projection defining $\widehat{Z}_{k}$ from $Z_{k}$ ensures that they are in the same mode, one has $\widehat{Z}_{k}\in E_{(i,1)}$ too. Moreover, $u^{*}(x)=+\infty$ for all $x\in E_{(i,1)}$ so that
$$\|\eta_{k}\1_{\{k\leq K\}}\|_{p}=\|\big(T_{k+1}-\widehat{T}_{k+1}\big)\1_{\{k\leq K\}}\|_{p}\leq \|T_{k+1}-\widehat{T}_{k+1}\|_{p}.$$

Furthermore, if $Z_{k}=\Delta$ where $\Delta$ denotes the cemetery state, then $\widehat{Z}_{k}=proj_{\Gamma_{k}}(Z_{k})=\Delta$ too and one has $\eta_{k}=0$ so that
\begin{align*}
\|\eta_{k}\1_{\{k> K\}}\|_{p}\leq \|\eta_{k}\1_{\{k> K\}}\1_{\{Z_{k}\not = \Delta\}}\|_{p}\leq &\|T_{k}-\widehat{T}_{k}\|_{p} + \|T_{k+1}-\widehat{T}_{k+1}\|_{p}\\
&+ \|\big(u^{*}(Z_{k})-u^{*}(\widehat{Z}_{k})\big)\1_{\{k> K\}}\1_{\{Z_{k}\not = \Delta\}}\|_{p}.
\end{align*}

Eventually, we intend to bound the last term of the previous sum and we consider therefore the event $\{k> K\}\cap\{Z_{k}\not = \Delta\}$. On the one hand, the random variables $Z_{k}$ and $\widehat{Z}_{k}$ both belong to $E_{(i,0)}$  for some $i\in\{1,2,3\}$. On the other hand, although $U_{m}=]-\infty;0.2]\times\mathbb{R}^3$ for all $m\in M$, one has actually $Z_{k}\in [0;0.2]\times\{0\}\times[10^{-7};10^{-5}]\times\R^{+}$ p.s. and, according to remark \ref{rmq-U-convex}, $\widehat{Z}_{k}\in [0;0.2]\times\{0\}\times[10^{-7};10^{-5}]\times\R^{+}$ p.s. too. Combining the two previous remark, one has $Z_{k}\in \widetilde{U}$ and $\widehat{Z}_{k}\in \widetilde{U}$ where $\widetilde{U}=[0;0.2]\times\{0\}\times[10^{-7};10^{-5}]\times\{0\}$. Finally, let $\alpha>0$ and notice that $\widetilde{U}\subset\widetilde{U}_{\alpha}\mathop{\cup}U^{\alpha}$.  One has
$$\|\big(u^{*}(Z_{k})-u^{*}(\widehat{Z}_{k})\big)\1_{\{k\geq K\}}\1_{\{Z_{k}\not = \Delta\}}\|_{p}\leq A+B$$
where 
\begin{align*}
A&=\|\big(u^{*}(Z_{k})-u^{*}(\widehat{Z}_{k})\big)\1_{\{Z_{k}\in \widetilde{U}_{\alpha}\}}\1_{\{k\geq K\}}\|_{p},\\
B&=\|\big(u^{*}(Z_{k})-u^{*}(\widehat{Z}_{k})\big)\1_{\{Z_{k}\in U^{\alpha}\}}\1_{\{k\geq K\}}\|_{p}.
\end{align*}
The term $B$ is easily bounded thanks to Lemma \ref{prop-proba-U-alpha-cor}, $B\leq 2C_{u^{*}}\PP_{\mu}(Z_{k}\in U^{\alpha})^{\frac{1}{p}}\leq 10 C_{u^{*}}\alpha^{\frac{1}{p}}$. We now turn to the term $A$ and use the Lipschitz continuity of $u^{*}$ on $\widetilde{U}_{\beta}$ for any $\beta>0$. One has
\begin{align*}
A\leq&\|\big(u^{*}(Z_{k})-u^{*}(\widehat{Z}_{k})\big)\1_{\{Z_{k}\in \widetilde{U}_{\alpha}\}}\1_{\{\widehat{Z}_{k}\in \widetilde{U}_{\frac{\alpha}{2}}\}}\1_{\{k\geq K\}}\|_{p}\\
&+\|\big(u^{*}(Z_{k})-u^{*}(\widehat{Z}_{k})\big)\1_{\{Z_{k}\in \widetilde{U}_{\alpha}\}}\1_{\{\widehat{Z}_{k}\not\in \widetilde{U}_{\frac{\alpha}{2}}\}}\1_{\{k\geq K\}}\|_{p}\\
\leq& [u^{*}]_{\frac{\alpha}{2}}\|Z_{k}-\widehat{Z}_{k}\|_{p}+2C_{u^{*}}\|\1_{\{Z_{k}\in \widetilde{U}_{\alpha}\}}\1_{\{\widehat{Z}_{k}\not\in \widetilde{U}_{\frac{\alpha}{2}}\}}\|_{p}.\\
\end{align*}
Notice now that $\1_{\{Z_{k}\in \widetilde{U}_{\alpha}\}}\1_{\{\widehat{Z}_{k}\not\in \widetilde{U}_{\frac{\alpha}{2}}\}}\leq \1_{\{|Z_{k}-\widehat{Z}_{k}|\geq \frac{\alpha}{2}\}}$ so that finally
\begin{align*}
A&\leq [u^{*}]_{\frac{\alpha}{2}}\|Z_{k}-\widehat{Z}_{k}\|_{p}+2C_{u^{*}}\left(\PP_{\mu}\big(|Z_{k}-\widehat{Z}_{k}|\geq \frac{\alpha}{2}\big)\right)^{\frac{1}{p}}\\
&\leq [u^{*}]_{\frac{\alpha}{2}}\|Z_{k}-\widehat{Z}_{k}\|_{p}+4C_{u^{*}}\frac{\|Z_{k}-\widehat{Z}_{k}\|_{p}}{\alpha}\\
\end{align*}
and the result follows. \hfill$\Box$\\
\end{proof}

\subsubsection*{The mean exit time}
Simulation results for the approximation of the mean exit time are given in Table \ref{cor moment1}. In order to have a value of reference, a Monte Carlo method ($10^{6}$ simulations) yields the value $E[\tau]_{Monte-Carlo}=526\times 10^{3}$ h.
For the first moment, the empirical convergence rate is presented on Figure \ref{rate-moment-corrosion}.
It is estimated through a regression model as $-0.38$. Remark that it is roughly the same order as the rate of convergence of the optimal quantizer (see Theorem \ref{theore}) as here the dimension is $4$.

\begin{table}[htbp]
\begin{center}
\begin{tabular}{|c||c|c|}
\hline
Points in the quantization grids &$\widehat{p}_{N,1}$ ($\times 10^{3}$ h)& relative error to $526\times 10^{3}$h\\
\hline
$20$ points&572& 8.7\% \\
\hline
$50$ points& 569 & 8.2\%\\
\hline
$100$ points& 557 & 5.9\%\\
\hline
$200$ points& 551 & 4.8\% \\
\hline
$500$ points& 539 &  2.5\%\\
\hline
\end{tabular}
\end{center}
\caption{Simulations results for the mean exit time}
\label{cor moment1}
\end{table}

\begin{figure}[hbtp]
\begin{center}
\includegraphics[scale=.4]{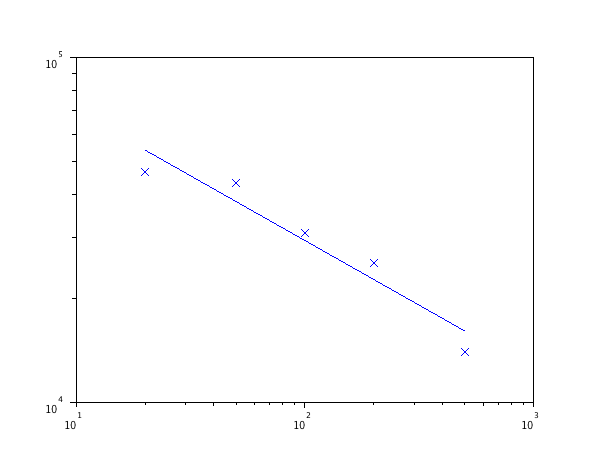}
\caption{Logarithm of the error w.r.t. the logarithm of the number of points in the quantization grids for the first moment of the corrosion process.}
\label{rate-moment-corrosion}
\end{center}
\end{figure}

\subsubsection*{The exit time distribution}

Considering the approximation scheme for the exit time distribution, one may notice that the quantized value $\widehat{p}_{N}(s)$ is not necessary smaller than 1. Therefore, it appears natural to replace $\widehat{p}_{N}(s)$ by $\widehat{p}_{N}(s)\wedge 1$. This does not change the convergence theorem and can only improve the approximation error. It is equally possible, and this is done in the results below, to replace $\widehat{p}_{N}(s)$ by $\frac{\widehat{p}_{N}(s)}{\widehat{p}_{N}(0)}$ since $\widehat{p}_{N}(0)$ goes to 1.\\

\begin{figure}[!h]
\begin{center}
\includegraphics[scale=.7]{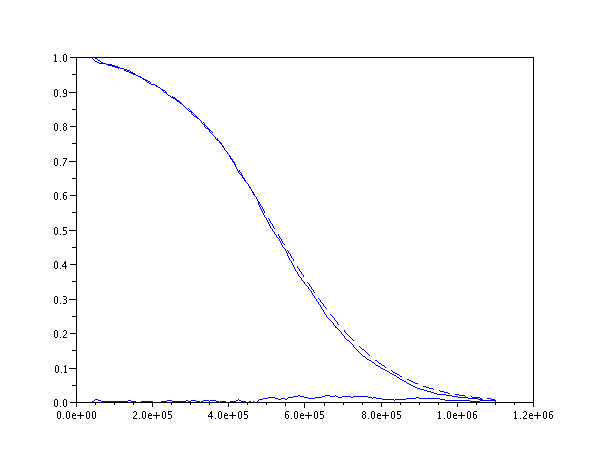}
\caption{Survival function of $\tau$ obtained through Monte Carlo simulations (dashed), quantized approximation (solid) and the error with 500 points in the quantization grids.}
\label{fig-cor}
\end{center}
\end{figure}

Figure \ref{fig-cor} presents the survival function of $\tau$ obtained through Monte Carlo simulations (the dashed line), through our approximation scheme (the solid line) and the error. Table \ref{cor distrib} contains the empirical error for different numbers of points in the quantization grids.
For the survival function, the empirical convergence rate is presented on Figure \ref{rate-survival-corrosion}.
It is estimated through a regression model as $-0.63$. Remark that it is roughly the same order as the rate of convergence of the optimal quantizer (see Theorem \ref{theore}) as here the dimension is $4$.

\begin{table}[htbp]
\begin{center}
\begin{tabular}{|c||c|}
\hline
Points in the quantization grids &$\max_{s}|p_{N}(s)-\widehat{p}_{N}(s)|$\\
\hline
$20$ points&0.145\\
\hline
$50$ points&0.119\\
\hline
$100$ points&0.040\\
\hline
$200$ points &0.039\\
\hline
$500$ points &0.020\\
\hline
\end{tabular}
\end{center}
\caption{Simulations results for the distribution}
\label{cor distrib}
\end{table}

\begin{figure}[hbtp]
\begin{center}
\includegraphics[scale=.4]{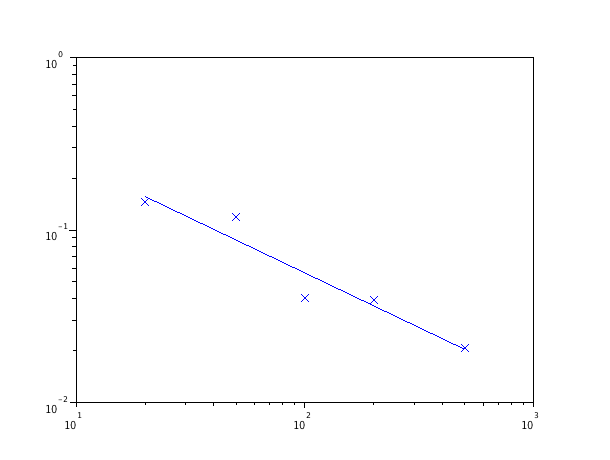}
\caption{Logarithm of the error w.r.t. the logarithm of the number of points in the quantization grids for the survival of the corrosion process.}
\label{rate-survival-corrosion}
\end{center}
\end{figure}

The convergence of the approximation scheme in the corrosion model appears to be slightly slower than in the previous example. This is due to the higher dimension of the process that has to be quantized, which is 4 in the case of the corrosion model and 1 in the case of the Poisson process.

\begin{Remark}
By using Monte Carlo simulations, we can approximate the value of~$\tilde{q}$.
One has $\tilde{q}\simeq 0.0187$ for $10^{7}$ histories.
\end{Remark}

\section{Advantages and practical interest of our approach}\label{conclusion}

Let us describe the practical interest of our approach.
\begin{itemize}
  \item {The quantizations grids only have to be computed once and for all and can be used for several purposes. Moreover, once they are obtained, the procedures leading to $\widehat{p}_{N}(s)$ and to $\widehat{p}_{N,j}$ can be achieved very simply since we only have to compute finite sums.}
  \item {Concerning the distribution, since $\widehat{p}_{N}(s)$ can be computed almost instantly for any value of $s$, the whole survival function can be obtained very quickly. Similarly, concerning the moments, $\widehat{p}_{N,j}$ can be computed very quickly for any $j$, so that any moment is almost instantly available.}
  \item {Furthermore, in both cases, one may decide to change the set $U$ and consider the exit time $\tau'$ from a new set $U'$. This will yield new sequences $(\widehat{q}_{k})_{k}$, $(\widehat{r}_{k,j})_{k}$ and $(\widehat{p}_{k,j})_{k}$ in the case of the $j$-th moment approximation or new sequences $(\widehat{q}_{k})_{k}$, $(\widehat{r}_{k}(s))_{k}$ and $(\widehat{p}_{k}(s))_{k}$ if we are interested in the distribution. These new sequences are obtained quickly and easily since the quantized process remains the same and we only have to compute finite sums. Of course, the set $U'$ must be such that Assumptions \ref{hyp_u*_lip} to \ref{hypNonRetourU} remain true and such that $\mathbf{P}_{\mu}(T_N<\tau')$ remains small without changing the computation horizon $N$. This last condition is fulfilled if, for instance, $U'\subset U$. This flexibility is an important advantage of our method over, for instance, a Monte Carlo method.}
\end{itemize}

\acks

This work was supported by ARPEGE program of the French National Agency of Research (ANR),
project ''FAUTOCOES'', number ANR-09-SEGI-004. Besides, the authors gratefully acknowledge EADS Astrium for its financial support.

\end{document}